\documentclass[10pt]{amsart}

\usepackage{amsthm}
\usepackage{amssymb}
\usepackage{amsmath}
\usepackage{mathtools}
\usepackage{pdfpages}
\usepackage{float}
\usepackage[colorlinks=true,pdftex,unicode=true,linktocpage,bookmarksopen,hypertexnames=false]{hyperref}
\usepackage{tikz-cd}
\usepackage{caption}
\usepackage[capitalise]{cleveref}
\usepackage{physics}
\usepackage{quiver}
\usepackage{MnSymbol}
\usepackage{graphicx}
\usepackage{relsize}

\overfullrule=1mm

\newcommand{\Aut}{\operatorname{Aut}}

\newcommand{\End}{\operatorname{End}}

\DeclareMathOperator{\eva}{ev}
\DeclareMathOperator{\diag}{diag}

\makeatletter
\numberwithin{equation}{section}
\numberwithin{figure}{section}
\numberwithin{table}{section}
\newtheorem{thm}{Theorem}[section]
\newtheorem*{thm*}{Theorem}
\newtheorem{lemma}[thm]{Lemma}
\newtheorem{corollary}[thm]{Corollary}
\newtheorem{proposition}[thm]{Proposition}
\newtheorem{notation}[thm]{Notation}

\theoremstyle{definition} 
\newtheorem{definition}[thm]{Definition}

\newtheorem{remark}[thm]{Remark}
\newtheorem{example}[thm]{Example}

\DeclareMathOperator{\Sym}{Sym}
\DeclareMathOperator{\inv}{inv}

\makeatother

\title{Free commutative skew braces}
\author{T.Letourmy }

\address[T. Letourmy]{D\'epartement de Math\'ematique, Universit\'e Libre de Bruxelles, Boulevard du Triomphe, B-1050 Brussels, Belgium; and 
Department of Mathematics and Data Science, Vrije Universiteit Brussel, Pleinlaan 2, 1050 Brussel, Belgium}
\email{thomas.letourmy@ulb.be}

\subjclass[2020]{Primary: 08B20; Secondary: 16T25, 08A30}
\keywords{Skew braces, free object, Yang--Baxter, polynomials, Fox calculus}

\begin{document}
\begin{abstract}
   The main result of this paper is an explicit construction of the free commutative skew brace---that is, a skew brace whose circle group is commutative---on an arbitrary generating set \(X\). We embed this object into a set of rational functions and show that a simple linear equation characterizes the image of this embedding. As a consequence, comparing elements in this skew brace is no more difficult than comparing elements in the free commutative group generated by \(X\). 
\end{abstract}
\maketitle
\section{Introduction}
The Yang–Baxter equation (YBE)~\cite{baxter2016exactly}, originally arising in the context of statistical mechanics and quantum field theory, plays now a key role in quantum groups, braided category, and low-dimensional topology.

A particularly elegant and combinatorially rich class of solutions to the YBE are the set-theoretic solutions, introduced by Drinfeld in 1992 in a seminal list of open problems~\cite{10.1007/BFb0101175}. These are solutions defined on a set $X$ via a bijective map $r\colon X\times X\to X\times X$ satisfying the braid relation
\[
(r \times \mathrm{id}) \circ (\mathrm{id} \times r) \circ (r \times \mathrm{id}) = (\mathrm{id} \times r) \circ (r \times \mathrm{id}) \circ (\mathrm{id} \times r)
\]
on \( X \times X \times X \).

In~\cite{RUMP2007153}, Rump showed that the involutive, non degenerate, set theoretic solutions can be studied through an algebraic structure he called \emph{braces}. Later Guarnieri and Vendramin generalized braces to \emph{skew braces} to study the non-involutive solutions~\cite{guarnieri2017skew}. Skew braces now have applications in other areas of mathematics such as Hopf--Galois structures and Lie rings ~\cite{MR3291816,MR3763907}. 

A skew brace is a triple $(B,\star, \circ)$\footnote{In the literature it is more conventional to use a $+$ operation instead of the $\star$. We made this choice to avoid confusion whenever we consider simultaneously brace type objects and rings.} such that $(B,\star)$ and $(B,\circ)$ are groups and the compatibility condition $a\circ (b\star c)=(a\circ b)\star a^{-1}\star (a\circ c)$ holds for all $a,b,c\in B$ where $a^{-1}$ denotes the inverse of $a$ with respect to $\star$. As skew braces form a variety (in the sense of universal algebra), the free skew brace on any set $X$ exists. However, the existence proof does not provide an explicit construction. Since explicitly constructing free skew braces appears to be difficult, it is natural to begin by studying certain subvarieties with a simpler structure. If both operations are commutative, the structure reduces to that of a \emph{commutative radical ring}. The free commutative radical ring generated by any set $X$ is well known and admits an explicit construction based on fractions of polynomials~\cite{kepka2007commutative}. A construction of the free one-generated centrally nilpotent left brace with right
nilpotency class two and a given left nilpotency class was given in~\cite{BALLESTER–BOLINCHES_ESTEBAN–ROMERO_KURDACHENKO_PÉREZ–CALABUIG_2025}. In this paper, we advance the study of free skew braces by giving an explicit construction of the free commutative skew brace---that is, a skew brace with commutative circle group---on an arbitrary generating set $X$, see Theorem~\ref{theorem:freeskb}. To this end, we consider a subvariety of skew-rings (in the sense of universal algebra; see~\cite{MR3977806}), which we call \emph{wires}. A wire is essentially a two-sided skew brace---i.e., a skew brace that also satisfies right brace distributivity---except that the operation $\circ$ is, in general, only a monoid. In our terminology, skew-rings correspond to \emph{right wires}. In a way, wires are to two-sided skew braces what rings are to division rings. For instance, we present a construction of the skew brace of fractions (Proposition~\ref{pro:fractions}), where we show that one can construct a skew brace from a commutative wire by taking fractions. This is a crucial result of the paper, because the skew brace of fractions has a universal property (Proposition~\ref{pro:universalproperty}) that reduces the problem of finding an explicit construction of the free commutative skew braces to that of finding one of the free commutative wires. 

We present several families of wires constructed from rings (see Proposition~\ref{pro:projection}, and Examples~\ref{ex:wiresringmorph} and~\ref{ex:wiresother}). Additionally, we provide a construction of the \emph{free wire} over an arbitrary set \(X\). This construction resembles that of the polynomial ring except that the addition is replaced by a non commutative operation and the usual distributivity is replaced by the brace distributivity. 

The \emph{free commutative wire}, denoted \(F_{\mathrm{cw}}(X)\), is obtained from the free wire by imposing commutativity on the circle operation \(\circ\). Using Fox calculus—a theory developed in the 1950s by Ralph Fox, originally motivated by knot theory~\cite{MR53938}—we show that comparing elements in \(F_{\mathrm{cw}}(X)\) reduces to comparing elements in \(\mathbb{Z}^{(X)}\), the free commutative group generated by \(X\). Moreover, this approach enables us to embed \(F_{\mathrm{cw}}(X)\) into the polynomial ring \((\mathbb{Z}[\mathbb{Z}^{(X)}])[(t_x)_{x \in X}]\), where the coefficients lie in the group algebra over \(\mathbb{Z}^{(X)}\).

Surprisingly, as shown in Theorem~\ref{pro:derivativeproduct}, this embedding is a monoid morphism from \((F_{\mathrm{cw}}(X), \circ)\) to the multiplicative monoid of the ring \((\mathbb{Z}[\mathbb{Z}^{(X)}])[(t_x)_{x \in X}]\). Since the target is an integral domain, it follows that \((F_{\mathrm{cw}}(X), \circ)\) is cancellative. This ensures that the passage to the skew brace of fractions introduces no additional relations. Consequently, comparisons in the free commutative skew brace over \(X\) can still be reduced to comparisons in \(\mathbb{Z}^{(X)}\).

The paper is organized into four sections. Section~\ref{sec:wires} recalls the construction of the free commutative radical rings, introduces the basics of wires, presents examples, and describes the skew brace of fractions. Section~\ref{sec:freewire} is dedicated to the construction of the free wires. In Section~\ref{sec:freeskb}, we construct the free commutative wire on an arbitrary set $X$ and show how Fox calculus can be used to compare its elements. The section concludes with a description of the free commutative skew brace over $X$.

\section{Wires}
\label{sec:wires}
\begin{notation}
    In the paper we will often use the notations $\star$ and $\circ$ for some binary operations.
    When it exists, the inverse of an element $a$ with respect to $\star$ will be denoted by $a^{-1}$, whereas its inverse with respect to $\circ$ will be written $a'$. If $\star$ has a neutral element we will denote it by $e$.
    
    The operation $\circ$ has higher precedence than the operation $\star$.

    When working in a group $(G,\star)$, for $g,h\in G$ we will denote the conjugation by $g^h=h^{-1}\star g\star h$. By $[g,h]$ we mean the commutator $g\star h\star g^{-1}\star h^{-1}$. 
    
    If $N_1,N_2$ are two normal subgroups of $G$ we will denote by $[N_1,N_2]$ the subgroup generated by the elements $[n_1,n_2]$ with $n_1\in N_1$ and $n_2\in N_2$.

    When working in a quotient object, we will denote by $[u]$ the equivalence class of some element $u$.
\end{notation}

\begin{definition}
    A skew brace is a triple $(B,\star,\circ)$ such that $(B,\star)$ and $(B,\circ)$ are groups and
    \begin{equation}
    \label{eq:leftdistrib}
        a\circ (b\star c)=a\circ b\star a^{-1}\star a\circ c
    \end{equation}
    holds for all $a,b,c\in B$. A skew brace is called \emph{two-sided} if
    \begin{equation}
    \label{eq:rightdistrib}
        (b\star c)\circ a=b\circ a\star a^{-1}\star c\circ a
    \end{equation}
    for all $a,b,c\in B$. If $\circ$ is commutative, we will call $(B,\star,\circ)$ a \emph{commutative skew brace}. Dually, a \emph{brace} is a skew brace such that $\star$ is commutative. The equations~\eqref{eq:leftdistrib} and~\eqref{eq:rightdistrib} will be called respectively \emph{left and right brace distributivity}.
\end{definition}
\begin{remark}
    \label{rem:trusses}
    Let $(B,\star)$ be a group and $(B,\circ)$ a monoid. Let $[b,c,d]=b\star c^{-1}\star d$ for all $a,b,c\in B$. Then, one can see that Equation~\eqref{eq:leftdistrib} is true for every triple of elements of $B$ if and only if \[
    a\circ [b,c,d]=[a\circ b,a\circ c,a\circ d]\]
    for all $a,b,c,d\in B$. Similarly, Equation~\ref{eq:rightdistrib} 
    is true for every triple of elements of $B$ if and only if \[
    [b,c,d]\circ a=[b\circ a,c\circ a,d\circ a]\]
    for all $a,b,c,d\in B$. This is a reformulation in the language of skew trusses introduced in~\cite{MR4443751}. This point of view will often be useful when doing computations.
\end{remark}
\begin{example}
    The simplest examples of skew braces are built from groups. Let $(G,\cdot)$ be a group, then $(G,\cdot,\cdot)$ and $(G,\cdot,\cdot^{\mathrm{op}})$ are skew braces where $g\cdot^{\mathrm{op}} h=h\cdot g$ for all $g,h\in G$. The former is called a \emph{trivial skew brace} and the latter an \emph{almost trivial skew brace}.
\end{example}
\begin{definition}
    A radical ring is a non unital ring $(R,+,\cdot)$ such that the operation $x\circ y=x+xy+y$ makes $(R,\circ)$ a group.
\end{definition}
With this operation, one can check that $(R,+,\circ)$ is a two-sided brace. In fact radical rings are exactly the two-sided braces~\cite{Rump}.
\begin{proposition}
    Let $(B,\star,\circ)$ be a two-sided brace. Then, with $+=\star$ and $a\cdot b=a^{-1}\star a\circ b\star b^{-1}$ for all $a,b\in B$, the triple $(B,+,\cdot)$ is a radical ring.
\end{proposition}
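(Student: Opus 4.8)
The plan is to verify directly that $(B,+,\cdot)$ satisfies the axioms of a non-unital ring and then to recover the circle operation from this ring structure. Set $+=\star$. Since $(B,\star,\circ)$ is a brace, $(B,\star)$ is abelian, so $(B,+)$ is the abelian additive group we need, and throughout I will freely use commutativity of $\star$ to rearrange terms. The multiplication is $a\cdot b=a^{-1}\star a\circ b\star b^{-1}$, where I recall that $\circ$ binds tighter than $\star$.

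The first computation I would record is the reconstruction of $\circ$ from the ring operations. Using only the group axioms of $\star$, one has $a\star(a\cdot b)\star b=a\star a^{-1}\star a\circ b\star b^{-1}\star b=a\circ b$, which in additive notation reads $a\circ b=a+a\cdot b+b$. This is exactly the operation appearing in the definition of a radical ring. Consequently, once $(B,+,\cdot)$ is shown to be a non-unital (associative) ring, the circle operation built from it coincides with the original $\circ$, and $(B,\circ)$ is a group by the brace axioms; this will immediately yield the radical-ring conclusion. This identity is also the lever I will use for associativity.

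Both distributive laws are routine consequences of brace distributivity. For left distributivity I would expand $a\cdot(b\star c)=a^{-1}\star a\circ(b\star c)\star(b\star c)^{-1}$, substitute the left brace distributivity \eqref{eq:leftdistrib} in the form $a\circ(b\star c)=a\circ b\star a^{-1}\star a\circ c$, write $(b\star c)^{-1}=b^{-1}\star c^{-1}$, and regroup using commutativity to obtain $(a^{-1}\star a\circ b\star b^{-1})\star(a^{-1}\star a\circ c\star c^{-1})=a\cdot b+a\cdot c$. The right distributive law is entirely symmetric, using \eqref{eq:rightdistrib} in place of \eqref{eq:leftdistrib}.

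The main obstacle is associativity of $\cdot$, which I would not attack head on but instead deduce from associativity of $\circ$. Expanding both sides of $(a\circ b)\circ c=a\circ(b\circ c)$ through the identity $a\circ b=a+a\cdot b+b$ and the distributive laws just established gives, on the left, $a+a\cdot b+b+a\cdot c+(a\cdot b)\cdot c+b\cdot c+c$ and, on the right, $a+a\cdot b+a\cdot(b\cdot c)+a\cdot c+b+b\cdot c+c$. Every summand except $(a\cdot b)\cdot c$ and $a\cdot(b\cdot c)$ occurs on both sides, so cancelling them in the abelian group $(B,+)$ forces $(a\cdot b)\cdot c=a\cdot(b\cdot c)$; here associativity of $\circ$ is doing the real work. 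This establishes that $(B,+,\cdot)$ is a non-unital ring, and combined with the reconstruction $x\circ y=x+x\cdot y+y$ above, it is a radical ring.
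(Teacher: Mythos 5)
Your proof is correct. Note that the paper itself gives no proof of this proposition: it is stated as a known fact, immediately after a citation of Rump's theorem that radical rings are exactly the two-sided braces. Your argument is therefore a genuinely self-contained verification rather than a variant of an argument in the paper, and all three of its ingredients are sound. The reconstruction identity $a\circ b=a\star (a\cdot b)\star b$, i.e.\ $a\circ b=a+a\cdot b+b$, uses only the group axioms of $\star$; the two ring distributive laws follow from~\eqref{eq:leftdistrib} and~\eqref{eq:rightdistrib} respectively, together with commutativity of $\star$, and this is exactly where two-sidedness of the brace enters; and the derivation of associativity of $\cdot$ by expanding both sides of $(a\circ b)\circ c=a\circ(b\circ c)$ and cancelling in the abelian group $(B,+)$ is a clean way to avoid a direct computation. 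One structural point worth making explicit: in that expansion, the left-hand side $(a\circ b)\cdot c$ requires the \emph{right} distributive law while the right-hand side $a\cdot(b\circ c)$ requires the \emph{left} one, so your ordering of the steps (both distributivities first, then associativity) is not optional but necessary, and you respected it. Finally, since the circle operation rebuilt from the ring coincides with the original $\circ$, which is a group by hypothesis, the ring $(B,+,\cdot)$ is radical, completing the argument.
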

 The free commutative radical rings are well known and can be found in~\cite{kepka2007commutative}.
\begin{proposition}
\label{pro:freeradicalring}
   Let $X$ be a set, $R=\mathbb{Z}[X]$ the polynomial ring with variables in $X$ and denote by $Q$ the field of fractions of $R$. Let $I(X)$ be the ideal generated by $X$ in R. The free commutative radical ring generated by $X$ is the subring $F_R(X)=\left\{\frac{f}{1+g}: f,g\in I(X)\right\}$ of $Q$.
\end{proposition}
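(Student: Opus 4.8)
The plan is to present $F_R(X)$ as the extension of the ideal $I(X)$ under a localization of $R$, to verify directly that it is a commutative radical ring, and then to establish the universal property through the Dorroh unitalization, the decisive point being that in a radical ring every element of the form $1+s$ is invertible. First I would introduce the multiplicative set $U=\{1+g:g\in I(X)\}$ of $R$. It is multiplicatively closed, since $(1+g_1)(1+g_2)=1+(g_1+g_2+g_1g_2)$ with $g_1+g_2+g_1g_2\in I(X)$, and it avoids $0$ because every element of $U$ has constant term $1$. Hence $U^{-1}R$ is a unital subring of $Q$, and a short inspection of numerators and denominators shows that $F_R(X)=\{f/u:f\in I(X),\ u\in U\}$ is exactly the extension of the ideal $I(X)$ in $U^{-1}R$; in particular it is closed under $+$, $-$ and $\cdot$, hence a non-unital commutative subring of $Q$ containing $X$ via $x=x/1$.

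Next I would check that $F_R(X)$ is a radical ring. The map $x\mapsto 1+x$ converts the circle operation into multiplication, as $(1+x)(1+y)=1+(x\circ y)$, so associativity of $\circ$ and the fact that $0$ is its neutral element are immediate. For inverses, writing $x=f/(1+g)$ one gets $1+x=(1+f+g)/(1+g)$, whence the circle inverse $x'=(1+x)^{-1}-1=-f/(1+f+g)$ lies again in $F_R(X)$, since $-f$ and $f+g$ belong to $I(X)$. Thus $(F_R(X),\circ)$ is a group and $F_R(X)$ is a commutative radical ring.

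The heart of the argument is the universal property. I would first record that $X$ generates $F_R(X)$ as a radical ring: the subring generated by $X$ under $+,-,\cdot$ is precisely $I(X)$, and for $g\in I(X)$ the circle inverse is $g'=-g/(1+g)$, so the identity $f/(1+g)=f+f\cdot g'$ exhibits an arbitrary element of $F_R(X)$ in terms of $X$; this forces uniqueness of any extension. For existence, given a commutative radical ring $S$ and a map $\phi\colon X\to S$, I would pass to the Dorroh unitalization $\hat S=\mathbb Z\oplus S$ and extend $\phi$ to the unique unital ring homomorphism $\Phi\colon R\to\hat S$, which exists because $R$ is the free commutative unital ring on $X$ and $\hat S$ is commutative. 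The defining property of a radical ring now enters: for $s\in S$ its circle inverse $s'$ satisfies $(1+s)(1+s')=1$ in $\hat S$, so $\Phi(U)\subseteq\hat S^{\times}$. By the universal property of localization $\Phi$ factors through a ring homomorphism $U^{-1}R\to\hat S$, whose restriction to $F_R(X)=U^{-1}I(X)$ takes values in the ideal $S$ and is the sought radical ring homomorphism extending $\phi$.

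The step I expect to be the main obstacle is this last one: one must verify that the localized map is well defined as a map into $S$, and, more importantly, isolate that it is the \emph{radical} ring structure of $S$—not merely its ring structure—that makes each $1+s$ invertible and thereby allows the factorization through $U^{-1}R$. The generation identity $f/(1+g)=f+f\cdot g'$ is the other essential ingredient, as it is what ties the fraction presentation of $F_R(X)$ back to the intrinsic radical-ring operations and secures uniqueness.
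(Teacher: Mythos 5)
Your proof is correct, and it is worth noting at the outset that the paper offers no argument for this proposition at all: it is stated as a known result with a citation to~\cite{kepka2007commutative}, so there is no in-paper proof to compare against. Your route is the standard self-contained one, and each step checks out: identifying $F_R(X)$ with the extension $U^{-1}I(X)$ of the ideal $I(X)$ under localization at the multiplicative set $U=\{1+g : g\in I(X)\}$ (which avoids $0$ since constant terms equal $1$); verifying the circle-group axioms through the correspondence $x\mapsto 1+x$, with the explicit circle inverse $-f/(1+f+g)$ landing back in $F_R(X)$; proving uniqueness of extensions via the generation identity $f/(1+g)=f+f\cdot g'$, which indeed shows the radical subring generated by $X$ is all of $F_R(X)$; and proving existence by lifting $\phi\colon X\to S$ to the Dorroh unitalization $\hat S=\mathbb{Z}\oplus S$, where the radical-ring axiom $(1+s)(1+s')=1$ makes $\Phi(U)$ consist of units, so the universal property of localization applies, and the restriction of the induced map to $U^{-1}I(X)$ lands in $S$ because $\Phi(I(X))\subseteq S$ and $S$ is an ideal of $\hat S$. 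One remark on how this relates to the paper's own framework: your argument is precisely the classical ring-theoretic instance of the machinery developed later in the paper, namely $I(X)$ is the free wire with both operations commutative (Remark~\ref{rem:ababfreewire}), and applying the skew brace of fractions construction with its universal property (Propositions~\ref{pro:fractions} and~\ref{pro:universalproperty}) to $I(X)$ produces exactly $F_R(X)$; your Dorroh-plus-localization argument plays the role that the Grothendieck-group-of-fractions argument plays there, with the invertibility of $1+s$ in $\hat S$ corresponding to the cancellativity needed to embed a wire into its brace of fractions.
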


\begin{remark}
\label{rem:discfreecombr}
As any non unital ring, a radical ring $(R,+,\cdot)$ can be embedded in a unital ring $(S,+,\cdot)$, its Dorroh extension. In $S$ one can twist the addition 
\begin{equation}
\label{eq:twist}
x\oplus y=x-1+y
\end{equation}
so that the original multiplication of the ring verifies the brace distributivities. In fact, there is an isomorphism of skew braces $(R,+,\circ) \cong (R+1,\oplus,\cdot)$ where $R+1=\{r+1: r\in R\}$ (Proposition 2.9 of~\cite{kepka2007commutative}). Through this isomorphism, the operation $\circ$ corresponds to the product $\cdot$ of the ring which is easier to understand.
    For example, there is an isomorphism of skew braces 
    \[(F_R(X),+,\circ)\cong \left(\left\{\frac{f}{g}: f,g\in I(X)+1\right\},\oplus, \cdot\right).\]
Another useful perspective is to consider the subset \( R(X) \subseteq \mathbb{Z}[X]\), consisting of polynomials whose coefficients sum to one. Via the evaluation map $x\mapsto x+1$ for all $x\in X$, one obtains an isomorphism
\[(F_R(X),+,\circ)\cong \left(\left\{\frac{f}{g}: f,g\in R(X)\right\},\oplus, \cdot\right).\]
 $R(X)$ is exactly the set of polynomials which can be written as words in the monomials of $\mathbb{Z}[X]$. In other words, $f\in R(X)$ if and only if it can be written as $m_1^{\oplus\epsilon_1}\oplus \dots \oplus m_n^{\oplus\epsilon_n}$, for some $\epsilon_i\in \{1,-1\}$ and $m_i$ monomials in $X$, where $m^{\oplus -1}$ denotes the inverse of $m$ with respect to $\oplus$. This viewpoint is particularly natural when working with free objects, and it forms the basis for our generalization to construct the free commutative skew brace on $X$.
\end{remark}
With these identifications, we can see that the free commutative brace on a set $X$ can be obtained by taking fractions of elements of $(I(X)+1,\oplus, \cdot)$ or $(R(X),\oplus,\cdot)$. This construction can be extended to more general objects.
\begin{definition}
    Let $(W,\star,\circ)$ be such that $(W,\star)$ is a group and $(W,\circ)$ is a semigroup. $(W,\star,\circ)$ is a left \emph{wire} if it follows left brace distributivity, it is a right wire if it follows right brace distributivity. If it follows both left and right brace distributivity, then $(W,\star,\circ)$ will be called a wire. A wire $W$ is called a \emph{commutative wire} if $(W,\circ)$ is commutative.
\end{definition}

\begin{remark}
    As in the case of skew braces, equations~\eqref{eq:leftdistrib} and~\eqref{eq:rightdistrib} both imply that $(W,\circ)$ is a monoid and that the two operations have the same neutral element.
\end{remark}
\begin{remark}
    $(W,\star,\circ)$ is a left wire if and only if $(W,\star,\circ^{\mathrm{op}})$ is a right wire, where the operation $\circ^{\mathrm{op}}$ is defined by $u\circ^{\mathrm{op}}v=v\circ u$.
\end{remark}
\begin{notation}
    We will denote by $W_\star$ the group $(W,\star)$ and $W_\circ$ the monoid $(W,\circ)$.
\end{notation}
\begin{remark}
    Such objects have already been considered in the literature. In~\cite{MR3977806} right wires were introduced under the name of skew-rings. In~\cite{MR4443751}, it is the left wires that are called skew-rings. We decided to put emphasis on the two-sided objects and give it a more concise name. the idea behind the name is that a wire is close to being a two-sided skew brace but it lacks some elements. This will be emphasized by Proposition~\ref{pro:fractions}. Since \( W_\circ \) only needs to be a semigroup, the name \emph{semibrace} could have been a natural choice. However, this term already appears in the literature for an other type of structures~\cite{CATINO2017163}.
\end{remark}
\begin{definition}
    Let $W$ and $V$ be left or right wires. A wire morphism is a map $f\colon W\to V$ such that $f(u\star v)=f(u)\star f(v)$ and $f(u\circ v)=f(u)\circ f(v)$ for all $u,v\in W$. If a wire morphism is bijective, then it will be called an isomorphism.
\end{definition}
\begin{example}
\label{ex:ringwire}
    Let $(R,+,\cdot)$ be a ring, then with $x\circ y=x+x\cdot y+y$ the triple $(R,+,\circ)$ is a wire. Conversely, if $(W,\star,\circ)$ is a wire with a commutative $\star$ operation, then $(W,\star,\cdot)$ with $u\cdot v=u^{-1}\star u\circ v\star v^{-1}$, is a ring.
\end{example}
\begin{remark}
\label{rem:ababfreewire}
    Example~\ref{ex:ringwire} shows that wires with commutative $\star$ operation are equivalent to (non necessarily unital) rings. Using the notations of Remark~\ref{rem:discfreecombr}, the object $I(X)$ is the free commutative ring generated by $X$. Hence, the wire $(I(X),+,\circ)$ is the free wire with both operations commutative generated by $X$ and it is isomorphic to the wires $(I(X)+1,\oplus, \cdot)$ and $(R(X),\oplus,\cdot)$.
\end{remark}
\begin{proposition}
\label{pro:projection}
    Let \((A, +, \cdot)\) be a unital ring, and let \(p, \pi \in \operatorname{End}((A, \cdot))\) be a monoid endomorphisms such that $p(u)\pi(v) = \pi(v)p(u)$ for all $u, v \in A$.
\begin{enumerate}
    \item Define a binary operation \(\star \colon A \times A \to A\) by
\[
u \star v := \pi(v)u - p(u)\pi(v) + p(u)v.
\]
Suppose that the following identities hold for all \(u, v \in A\):
\begin{equation}
\label{eq:cond1}
p(u \star v) = p(u)p(v), \quad \pi(u \star v) = \pi(v)\pi(u).
\end{equation}
Then \((A, \star)\) is a monoid.

\item Furthermore, let $G$ and $H$ be subgroups of $A^\star$ and
$U := p^{-1}(G) \cap \pi^{-1}(H)$,
and for any \(u \in U\), define
\[
\inv_{\star}(u) := \pi(u)^{-1} - p(u)^{-1}\pi(u)^{-1}u + p(u)^{-1}.
\]
If in addition to~\eqref{eq:cond1}, the following conditions are satisfied for all \(u \in U\):
\[
p(\inv_{\star}(u)) = p(u)^{-1}, \quad \pi(\inv_{\star}(u)) = \pi(u)^{-1},
\]
then \((U, \star, \cdot)\) forms a \emph{right wire}.\footnote{This proposition was established with the help of Victoria Lebed as a generalization of two examples appearing in a previous version.}
\end{enumerate}
\end{proposition}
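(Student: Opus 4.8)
My plan is to verify each axiom by direct computation inside the ring $A$, using throughout the commuting-images hypothesis $p(u)\pi(v)=\pi(v)p(u)$ together with the multiplicativity of $p$ and $\pi$. The additional identities~\eqref{eq:cond1} and their inverse analogues in part (2) are exactly what is needed to control the $p$- and $\pi$-images of the various $\star$-products as they occur.

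\textbf{Part (1).} I would first guess that the neutral element for $\star$ is the ring unit $1$. Since $p,\pi$ are endomorphisms of the multiplicative monoid $(A,\cdot)$ we have $p(1)=\pi(1)=1$, and substituting into the definition of $\star$ gives $u\star 1=u-p(u)+p(u)=u$ and $1\star v=\pi(v)-\pi(v)+v=v$, so $1$ is two-sided neutral. Associativity is the substantive point: expanding $(u\star v)\star w$ and $u\star(v\star w)$ from the definition, I would replace $p(u\star v)$ and $\pi(v\star w)$ by $p(u)p(v)$ and $\pi(w)\pi(v)$ using~\eqref{eq:cond1}, and then move every $p$-factor past every $\pi$-factor via the commuting-images hypothesis. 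Both sides then collapse to the common five-term expression $\pi(w)\pi(v)u-p(u)\pi(w)\pi(v)+p(u)\pi(w)v-p(u)p(v)\pi(w)+p(u)p(v)w$, which establishes associativity and hence that $(A,\star)$ is a monoid.

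\textbf{Part (2).} I would first check that $U$ is closed under both operations and contains $1$. As $G,H$ are subgroups they contain $1=p(1)=\pi(1)$, so $1\in U$; closure under $\cdot$ follows from $p(uv)=p(u)p(v)\in G$ and $\pi(uv)=\pi(u)\pi(v)\in H$, and closure under $\star$ follows identically from~\eqref{eq:cond1}. Thus $(U,\cdot)$ is a semigroup and, by part (1), $(U,\star)$ is a submonoid of $(A,\star)$. To upgrade $(U,\star)$ to a group I would show that $\inv_\star(u)$ is a two-sided $\star$-inverse lying in $U$: the hypotheses $p(\inv_\star(u))=p(u)^{-1}$ and $\pi(\inv_\star(u))=\pi(u)^{-1}$ place $\inv_\star(u)$ in $U$ (as $G,H$ are closed under inversion) and also feed directly into the evaluation of $u\star\inv_\star(u)$ and $\inv_\star(u)\star u$. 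Expanding these two products and using that $p(u),p(u)^{-1}$ commute with $\pi(u),\pi(u)^{-1}$, all non-unit terms cancel and both products equal $1$, so $(U,\star)$ is a group.

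\textbf{Right brace distributivity.} The remaining and most laborious step is~\eqref{eq:rightdistrib} for $\circ=\cdot$, which, with the stated precedence, reads $(b\star c)\cdot a=(b\cdot a)\star a^{-1}\star(c\cdot a)$ where $a^{-1}=\inv_\star(a)$. I expect this to be the main obstacle, since it combines the explicit inverse formula with two nested $\star$-products. My approach is to expand the left-hand side to $\pi(c)ba-p(b)\pi(c)a+p(b)ca$, and to evaluate the right-hand side from the inside out: first $\inv_\star(a)\star(c\cdot a)$, using $p(\inv_\star(a))=p(a)^{-1}$, $\pi(\inv_\star(a))=\pi(a)^{-1}$ and the commuting relations to collapse it to $\pi(c)-p(a)^{-1}\pi(c)a+p(a)^{-1}ca$, and then $(b\cdot a)\star(\,\cdot\,)$, whose $p$- and $\pi$-images are again controlled by~\eqref{eq:cond1}. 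After the analogous cancellations the right-hand side reduces to the same $\pi(c)ba-p(b)\pi(c)a+p(b)ca$, proving the identity. Alternatively one could route this through the ternary reformulation of Remark~\ref{rem:trusses}, checking that right multiplication by $a$ is a morphism for the heap operation $[\,\cdot\,,\cdot\,,\cdot\,]$, but the direct expansion seems the most economical. Together these steps show that $(U,\star,\cdot)$ is a right wire.
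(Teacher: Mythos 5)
Your proof is correct: the neutral-element and associativity checks in part (1), the closure and two-sided inverse verifications for $(U,\star)$, and the reduction of both sides of the right distributivity law to $\pi(c)ba-p(b)\pi(c)a+p(b)ca$ all hold under direct expansion. Part (1) is essentially the paper's own computation. The divergence is in part (2): the paper never expands the two sides of~\eqref{eq:rightdistrib} separately. Instead it computes the heap expression once in closed form,
\[
[u,v,w]=u\star\inv_{\star}(v)\star w=\pi(w)\pi(v)^{-1}u-p(u)p(v)^{-1}\pi(w)\pi(v)^{-1}v+p(u)p(v)^{-1}w,
\]
observes that right multiplication preserves this shape, i.e.\ $[u,v,w]x=[ux,vx,wx]$ (immediate from multiplicativity of $p,\pi$ and the commutation hypothesis), and concludes via the truss reformulation of Remark~\ref{rem:trusses} --- precisely the alternative you mentioned and set aside as less economical. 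In fact the paper's route is the leaner one: a single closed formula plus a one-line equivariance observation replaces your two nested expansions of $(b\cdot a)\star\inv_{\star}(a)\star(c\cdot a)$, and it simultaneously handles the inverse bookkeeping, since the heap already packages $\inv_\star$. What your route buys is self-containedness and explicitness: you avoid invoking the remark, and you spell out the two-sided inverse identity $u\star\inv_\star(u)=\inv_\star(u)\star u=1$ and the stability of $U$ under $\star$, $\cdot$ and $\inv_\star$, which the paper compresses into ``one can see that $U$ is stable under the operations.'' Both arguments rest on the same three ingredients (multiplicativity of $p$ and $\pi$, commutation of their images, and conditions~\eqref{eq:cond1} together with their inverse analogues), so the computational content is the same; only the packaging of the distributivity step differs.
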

    \begin{proof}
    \begin{enumerate}
    \item 
    First, we show that if we assume~\eqref{eq:cond1}, then $\star$ is associative. Let $u,v,w\in A$,
        \begin{align*}
    u\star(v\star w)=& \pi(w)\pi(v)(u-p(u))+p(u)(v\star w)\\
    =& \pi(w)(\pi(v)u-p(u)\pi(v)+p(u)v)-p(u)p(v)\pi(w)+ p(u)p(v)w\\
    =&(u\star v)\star w.
    \end{align*}
    Recall that a monoid endomorphism fixes the unit by definition, so the neutral element is $1$. 
    \item With the assumption, one can see that $U$ is stable under the operations $\star$, $\inv_\star$ and $\cdot$ and that the inverse of an element $u\in U$ with respect to the operation $\star$ is $\inv_{\star}(u)$.
    
    In addition, let $u,v,w\in U$,
    \begin{align*}
        [u,v,w]=&u\star \inv_\star(v)\star w\\
        =& \pi(w)\pi(v)^{-1}u-p(u)p(v)^{-1}\pi(w)\pi(v)^{-1}v+p(u)p(v)^{-1}w
    \end{align*}
    which implies that, $[u,v,w]x=[ux,vx,wx]$
    for all $x\in A$. This concludes the proof by Remark~\ref{rem:trusses}.\qedhere
    \end{enumerate}
    \end{proof}

    \begin{example}
    \label{ex:wiresringmorph}
    Using notations of Proposition~\ref{pro:projection}, let $p$ and $\pi$ be idempotent ring endomorphisms such that $p(u)\pi(v) = \pi(v)p(u)$ for all $u, v \in A$. Then, for every subgroups $G$ and $H$ of $A^*$, $(p^{-1}(G) \cap \pi^{-1}(H),\star,\cdot)$ is a right wire.   
    
    Prototypal examples are given by augmented algebras. Let $R$ and $S$ be commutative rings and $A$ an $R$-algebra which is also an $S$-algebra. Let $\theta\colon A\to R$ be an $R$-algebra morphism and $\tau\colon A\to S$ be an $S$-algebra morphism, then we can choose $p=\iota_R\theta$ and $\pi= \iota_S\tau$ where $\iota_R$ and $\iota_S$ are respectively the inclusion of $R$ and $S$ in the center of $A$. We will see that the free commutative wires can be obtained in this way (see the discussion that follows Definition~\ref{def:asspoly}). 
\end{example}
\begin{example}
\label{ex:wiresother}
    Other examples do not fall in the family presented in Example~\ref{ex:wiresringmorph}.
    For example if $p$ and $\pi$ are both the trivial monoid morphism---that is, the map sending every elements to one---then regardless of the groups $G$ and $H$ we end up with the wire $(A,\oplus,\cdot)$ obtained by twisting the addition as in~\eqref{eq:twist}.
    Another remarkable case is when $\pi$ is the trivial monoid morphism and $p$ an idempotent ring endomorphism or the other way around. 
    There are also non trivial examples in which $p$ and $\pi$ are both not ring morphisms. For example, let $R$ be a unital ring and $A$ the ring of upper triangular $n\times n$ matrices with coefficient in $R$. Let $E$ be a subset of $\{1,\dots,n\}$, we set $\pi$ to be the trivial monoid morphism and we define $p$ as

    \begin{equation*}
    p\left(
    \begin{pmatrix}
a_{11} & a_{12} & \cdots & a_{1n} \\
    & a_{22} & \cdots & a_{2n} \\
 &  & \ddots & \vdots \\
0      &       &  & a_{nn}
\end{pmatrix}
\right)=\diag(\mathbf{1}_E(1)(a_{11}-1)+1,\dots,\mathbf{1}_E(n)(a_{nn}-1)+1)\\
\end{equation*}
Where $\mathbf{1}_E$ is the indicator function. This amounts to saying that $p$ sends the upper triangular matrix with $(i,j)$ entry $a_{ij}$ to the diagonal matrix whose $i$-th diagonal entry is $1$ if $i\not\in E$ and $a_{ii}$ otherwise. Then, for any subgroups $G$ of $A^*$, the triple $(p^{-1}(G),\star,\cdot)$ is a right wire.
\end{example}
Our next goal is to construct skew braces from commutative wires. To that end, we begin by recalling how a group can be constructed from a commutative monoid.
\begin{definition}
    Let $(M,\circ)$ be a commutative monoid. On the cartesian product $M\times M$ define the equivalence relation $(u,v)\sim (u_1,v_1)$ if and only if there exists a $k\in M$ such that $u\circ v_1\circ k=u_1\circ v\circ k$. Then, the set 
    \[G(M)= (M\times M)/\sim\]
     is a group with operation $\frac{u}{v}\circ \frac{u_1}{v_1}=\frac{u\circ u_1}{v\circ v_1}$, where $\frac{x}{y}$ denotes the equivalence class of $(x,y)$. This group is called the Grothendieck group of $M$.
\end{definition}
There is a canonical map $\iota \colon M\to G(M)$ that sends $m$ to $\frac{m}{e}$ where $e$ is the neutral element of the monoid.
\begin{proposition}
    Let $(M,\circ)$ be a commutative monoid and $(A,\circ)$ a commutative group. Let $f\colon M\to A$ be a monoid morphism. Then, there exists a unique group morphism $\hat{f}\colon G(M)\to A$ such that $\hat{f}\iota=f$.
\end{proposition}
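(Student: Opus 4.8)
The plan is to define the candidate map by the only formula compatible with the requirements and then check that everything else is forced. Since $(A,\circ)$ is a commutative group, every element has a $\circ$-inverse, denoted $f(v)'$, so I would set
\[
\hat{f}\!\left(\frac{u}{v}\right) := f(u)\circ f(v)'.
\]
All the real content of the statement is concentrated in the well-definedness of this formula; once that is secured, the morphism property, the compatibility with $\iota$, and uniqueness are short formal computations.

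The main (and essentially only) point is thus well-definedness. Suppose $(u,v)\sim(u_1,v_1)$, so there is a $k\in M$ with $u\circ v_1\circ k=u_1\circ v\circ k$. Applying the monoid morphism $f$ gives $f(u)\circ f(v_1)\circ f(k)=f(u_1)\circ f(v)\circ f(k)$ in $A$, and since $A$ is a group I may cancel $f(k)$. Rearranging with the commutativity of $A$, this yields $f(u)\circ f(v)'=f(u_1)\circ f(v_1)'$, which is exactly $\hat{f}(\frac{u}{v})=\hat{f}(\frac{u_1}{v_1})$. Note that both the group axiom (to cancel $f(k)$) and commutativity (to rearrange) of $A$ are genuinely used here; this is precisely where the hypotheses on $A$ enter.

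It then remains to verify three routine facts. That $\hat{f}$ is a group morphism follows from $f$ being a monoid morphism together with commutativity of $A$, since
\[
\hat{f}\!\left(\frac{u}{v}\circ\frac{u_1}{v_1}\right)=f(u\circ u_1)\circ f(v\circ v_1)'=\bigl(f(u)\circ f(v)'\bigr)\circ\bigl(f(u_1)\circ f(v_1)'\bigr).
\]
Compatibility with $\iota$ is immediate: $\hat{f}(\iota(m))=\hat{f}(\frac{m}{e})=f(m)\circ f(e)'=f(m)$, using $f(e)=e$. Finally, for uniqueness, any group morphism $g$ with $g\iota=f$ is already determined on all of $G(M)$ because $\frac{u}{v}=\iota(u)\circ(\iota(v))'$ in $G(M)$, which forces $g(\frac{u}{v})=g(\iota(u))\circ g(\iota(v))'=f(u)\circ f(v)'=\hat{f}(\frac{u}{v})$. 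I expect no genuine obstacle beyond keeping track of the $\circ$-inverse notation and invoking commutativity at the right places.
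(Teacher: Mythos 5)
Your proposal is correct and uses exactly the same map as the paper, $\hat{f}\left(\frac{u}{v}\right)=f(u)\circ f(v)'$; the paper simply states this formula and omits the verifications (well-definedness, morphism property, uniqueness), which you carry out correctly.
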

\begin{proof}
    The morphism $\hat{f}$ is defined by $\hat{f}(\frac{u}{v})=f(u)\circ f(v)'$.
\end{proof}
\begin{remark}
    The map $\iota$ is injective if and only if $M$ is cancellative. For example if a commutative monoid $M$ contains an element $0$ such that $0\circ u=0$ for all $u\in M$ then $G(M)$ is the trivial group. This is the case for the monoid of a unital commutative ring.
\end{remark}
\begin{proposition}
\label{pro:fractions}
 Let $(W,\star,\circ)$ be a commutative wire, and let $(B(W),\circ)$ denote the Grothendieck group of the monoid $W_\circ$. On $B(W)$, define the operation
 \[\frac{u}{v}\star\frac{u_1}{v_1}= \frac{u\circ v_1\star (v\circ v_1)^{-1}\star u_1\circ v}{v\circ v_1}.\]
  Then, $(B(W),\star,\circ)$ is a commutative skew brace. We call $B(W)$ the skew brace of fractions of $W$.
\end{proposition}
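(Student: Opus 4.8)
The plan is to verify the skew brace axioms by reducing every statement to a computation in $W$ over a common denominator, using the bracket notation $[x,y,z]=x\star y^{-1}\star z$ and the brace distributivity of $W$ in its truss form (Remark~\ref{rem:trusses}), which since $W_\circ$ is commutative reads $[x,y,z]\circ k=[x\circ k,\,y\circ k,\,z\circ k]=k\circ[x,y,z]$. The guiding observation is that, after bringing the two arguments to the common denominator $v\circ v_1$, the defining formula becomes
\[
\frac{u}{v}\star\frac{u_1}{v_1}=\frac{[\,u\circ v_1,\ v\circ v_1,\ u_1\circ v\,]}{v\circ v_1},
\]
and more generally any two fractions written over a common denominator $d$ satisfy $\frac{a}{d}\star\frac{b}{d}=\frac{[a,d,b]}{d}$. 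In this form the set $\{\frac{a}{d}:a\in W\}$ visibly carries the group law $a\cdot_d b=a\star d^{-1}\star b$ obtained from $(W,\star)$ by shifting the identity to $d$; its neutral element is $\frac{d}{d}=\frac{e}{e}$ and the $\star$-inverse of $\frac{a}{d}$ is $\frac{d\star a^{-1}\star d}{d}$.

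The main obstacle is well-definedness, since the formula mixes $\star$ and the $\star$-inverse while the Grothendieck equivalence is phrased purely in terms of $\circ$. I would resolve this with brace distributivity. The basic compatibility is that scaling a representative is harmless: $[a\circ k,\,d\circ k,\,b\circ k]=[a,d,b]\circ k$, whence $\frac{[a\circ k,\,d\circ k,\,b\circ k]}{d\circ k}=\frac{[a,d,b]}{d}$ in $B(W)$. For full independence of representatives, suppose $\frac{u}{v}=\frac{u'}{v'}$, witnessed by $u\circ v'\circ k=u'\circ v\circ k$, and fix $\frac{u_1}{v_1}$. Pushing both $\frac{u}{v}\star\frac{u_1}{v_1}$ and $\frac{u'}{v'}\star\frac{u_1}{v_1}$ to the common denominator $v\circ v'\circ v_1\circ k$ and applying distributivity turns each numerator into a single bracket; the two resulting brackets have identical entries by commutativity of $\circ$ together with the witnessing relation. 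The same argument in the second variable gives well-definedness, and at the same time identifies $\star$ on $B(W)$, restricted to fractions over a fixed common denominator $D$, with the shifted operation $\cdot_D$.

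Once well-definedness is in hand the group axioms for $\star$ are routine: bringing two or three elements to a common denominator $D$, associativity, the neutral element $\frac{e}{e}$, and the inverse $\frac{D\star a^{-1}\star D}{D}$ are all inherited from the group $(W,\cdot_D)\cong(W,\star)$. Moreover $(B(W),\circ)$ is the Grothendieck group of the commutative monoid $W_\circ$, hence an abelian group, so the circle group is commutative as required.

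It remains to verify left brace distributivity. I would write $a=\frac{p}{s}$ and bring $b,c$ to a common denominator $t$, say $b=\frac{q}{t}$ and $c=\frac{r}{t}$. Then $b\star c=\frac{[q,t,r]}{t}$, and distributivity of $W$ gives $p\circ[q,t,r]=[\,p\circ q,\ p\circ t,\ p\circ r\,]$, so that
\[
a\circ(b\star c)=\frac{[\,p\circ q,\ p\circ t,\ p\circ r\,]}{s\circ t}.
\]
On the other side I expand $a\circ b\star a^{-1}\star a\circ c$ over the common denominator $s\circ t$, using distributivity once more to rewrite $a^{-1}$ with that denominator; the three brackets telescope and collapse to the same fraction. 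Since $\circ$ is commutative on $B(W)$, left and right brace distributivity coincide, so this single check completes the proof that $(B(W),\star,\circ)$ is a commutative skew brace.
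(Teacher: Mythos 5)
Your proof is correct, and its backbone --- well-definedness via scaling representatives, the truss identity $[x,y,z]\circ k=[x\circ k,y\circ k,z\circ k]$ from Remark~\ref{rem:trusses}, and commutativity of $\circ$ to match entries --- is exactly the paper's argument. Where you differ is in organization: the paper verifies associativity of $\star$ by directly expanding $\left(\frac{u}{v}\star\frac{u_1}{v_1}\right)\star\frac{u_2}{v_2}$ over the denominator $v\circ v_1\circ v_2$, and checks distributivity through the general three-entry bracket formula $\left[\frac{u}{v},\frac{u_1}{v_1},\frac{u_2}{v_2}\right]=\frac{[u\circ v_1\circ v_2,\,v\circ u_1\circ v_2,\,v\circ v_1\circ u_2]}{v\circ v_1\circ v_2}$, whereas you first isolate the observation that fractions over a fixed denominator $D$ are closed under $\star$ and carry the identity-shifted group law $a\cdot_D b=a\star D^{-1}\star b$, so that associativity, the neutral element and inverses are inherited from $(W,\star)$ rather than recomputed, and distributivity follows from a telescoping computation with $b,c$ over a common denominator. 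Your route buys conceptual economy --- the group axioms come for free once well-definedness is established, and the same device handles distributivity --- at the cost of one extra reduction step (rewriting arbitrary fractions over a common denominator, and $a^{-1}$ over the denominator $s\circ t$); the paper's route is more computational but produces the explicit closed formulas (e.g.\ the inverse $\frac{v\star u^{-1}\star v}{v}$ and the three-fraction bracket) that are reused later, notably in the proof of Proposition~\ref{pro:universalproperty}. One small point worth making explicit in your write-up: treating the two arguments of $\star$ one at a time in the well-definedness check is legitimate because the two one-variable replacements compose, which is how one recovers the paper's simultaneous replacement with witnesses $k,k_1$.
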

\begin{proof}
    First, we show that $\star$ is well-defined. Suppose that $\frac{x}{y}=\frac{u}{v}$ and $\frac{x_1}{y_1}=\frac{u_1}{v_1}$, we will show that $\frac{x}{y}\star \frac{x_1}{y_1}=\frac{u}{v}\star \frac{u_1}{v_1}$. 
    We know that there exist $k,k_1\in W$ such that 
    \begin{equation}
    \label{eq:noncanc}
         x\circ v\circ k=y\circ u\circ k \quad \text{and}\quad x_1\circ v_1\circ k_1= y_1\circ u_1\circ k_1.
    \end{equation}
    Let $c=v\circ v_1\circ k\circ k_1$ and $d= y\circ y_1\circ k\circ k_1$. Recall from Remark~\ref{rem:trusses} that,        
    \[(u\circ v_1\star (v\circ v_1^{-1})\star u_1\circ v)\circ d= u\circ v_1\circ d\star (v\circ v_1\circ d)^{-1}\star u_1\circ v\circ d.
    \]
    We deduce from~\eqref{eq:noncanc} that $u\circ v_1\circ d=x\circ y_1\circ c$, $v\circ v_1\circ d=y\circ y_1\circ c$ and $u_1\circ v\circ d=x_1\circ y\circ c$. Again using Remark~\ref{rem:trusses}, we obtain
    
    \[(u\circ v_1\star (v\circ v_1)^{-1}\star u_1\circ v)\circ d=(x\circ y_1\star (y\circ y_1)^{-1}\star x_1\circ y)\circ c.\]
    
    Let us show now that $\star$ is associative. Let $u,v,u_1,v_1,u_2,v_2\in W$. Then,
    \begin{align*}
        &\left(\frac{u}{v}\star \frac{u_1}{v_1}\right)\star \frac{u_2}{v_2} \\
        = &\frac{u\circ v_1\star (v\circ v_1)^{-1} \star u_1\circ v}{v\circ v_1}\star\frac{u_2}{v_2}\\
        = &\frac{u\circ v_1\circ v_2\star (v\circ v_1\circ v_2)^{-1} \star u_1\circ v\circ v_2 \star (v\circ v_1\circ v_2)^{-1} \star u_2\circ v\circ v_1}{v\circ v_1\circ v_2}\\
        = & \frac{u}{v}\star \frac{u_1\circ v_2\star (v_1\circ v_2)^{-1}\star u_2\circ v_1}{v_1\circ v_2}\\
        = & \frac{u}{v}\star \left(\frac{u_1}{v_1}\star\frac{u_2}{v_2}\right).
    \end{align*}
    Thus $(B(W),\star)$ is a group with neutral element $\frac{e}{e}$ and the inverse of $\frac{u}{v}$ is $\frac{v\star u^{-1}\star v}{v}$. 

    It is left to check that we have the brace distributivity. Using notations of Remark~\ref{rem:trusses},
    \[
    \left[\frac{u}{v},\frac{u_1}{v_1},\frac{u_2}{v_2}\right]=\frac{[u\circ v_1 \circ v_2, v\circ u_1\circ  v_2, v\circ v_1\circ u_2]}{v\circ v_1\circ v_2}
    \]
    for all $u,u_1,u_2,v,v_1,v_2\in W$.
    Then,
    \begin{align*}
        \left[\frac{x\circ u}{y\circ v},\frac{x\circ u_1}{y\circ v_1},\frac{x\circ u_2}{y\circ v_2}\right]=&\frac{y\circ y\circ[x\circ u\circ v_1\circ v_2, v\circ x\circ  u_1\circ v_2, x\circ v\circ v_1\circ u_2]}{y\circ v\circ y\circ v_1\circ y\circ  v_2}\\
        =& \frac{[x\circ u\circ v_1\circ  v_2, x\circ v\circ u_1\circ  v_2, x\circ v\circ v_1\circ u_2]}{y\circ v\circ v_1\circ v_2}\\
        =&\frac{x}{y}\circ \left[\frac{u}{v},\frac{u_1}{v_1},\frac{u_2}{v_2}\right]
    \end{align*}
     for all $x,y,u,u_1,u_2,v,v_1,v_2\in W$.
\end{proof}
\begin{remark}
    One can see that the free commutative brace over $X$ is in fact the skew brace of fractions of $(R(X),\oplus, \cdot)$. 
\end{remark}
\begin{proposition}
    \label{pro:universalproperty} 
Let $W$ be a commutative wire, $B$ a skew brace, and $f \colon W \to B$ a wire morphism. Then the canonical map $\iota \colon W \to B(W)$, which sends each $w \in W$ to $\frac{w}{e}$, is a wire morphism. Moreover, there exists a unique skew brace morphism $\hat{f} \colon B(W) \to B$ such that the following diagram commutes:
\[
\begin{tikzcd}
	B(W) \arrow[dashed]{r}{\hat{f}} & B \\
	W \arrow{u}{\iota} \arrow{ur}[swap]{f}
\end{tikzcd}
\]
\end{proposition}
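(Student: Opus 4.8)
The plan is to establish, in order, that $\iota$ is a wire morphism, that the extension $\hat f$ is uniquely determined, and that it exists as a morphism of skew braces; the only genuine difficulty will be compatibility with $\star$.

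First I would verify that $\iota$ is a wire morphism. Compatibility with $\circ$ is immediate from the Grothendieck construction, and for $\star$ it suffices to evaluate the defining formula for $\star$ on $B(W)$ at denominators equal to $e$: using $w\circ e=w$, $e\circ e=e$ and $e^{-1}=e$ one finds $\iota(w)\star\iota(w_1)=\frac{w\star w_1}{e}=\iota(w\star w_1)$.

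Next comes the construction of $\hat f$ as a morphism for $\circ$. Since $W$ is commutative, the image $f(W)$ is $\circ$-commutative and hence generates an abelian subgroup $C$ of $(B,\circ)$. Regarding $f$ as a monoid morphism $W_\circ\to C$ into a commutative group, the universal property of the Grothendieck group produces a unique group morphism $\hat f\colon(B(W),\circ)\to C\subseteq(B,\circ)$ with $\hat f\iota=f$, necessarily given by $\hat f(\frac uv)=f(u)\circ f(v)'$. This also settles uniqueness for the full statement: any skew brace morphism extending $f$ along $\iota$ is in particular such a $\circ$-morphism, and $\iota(W)$ generates $(B(W),\circ)$, so it must agree with this $\hat f$.

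The main step, which I expect to be the only real obstacle, is to show that this $\hat f$ is also compatible with $\star$. The point is that $B$ is only assumed to be a left skew brace, so I cannot invoke right brace distributivity in $B$; instead I would test the desired identity after applying an injective map. Fix $g=\frac uv$ and $h=\frac{u_1}{v_1}$ and set $\delta=f(v\circ v_1)$. Left $\circ$-translation by $\delta$ is a bijection of $B$, so it is enough to check that $\delta\circ(\hat f(g)\star\hat f(h))$ and $\delta\circ\hat f(g\star h)$ coincide. For the first expression I would use left brace distributivity in $B$ together with the $\circ$-commutativity of $f(W)$ to cancel, obtaining $\delta\circ\hat f(g)=f(u\circ v_1)$ and $\delta\circ\hat f(h)=f(u_1\circ v)$, so that the whole expression collapses to $f(N)$, where $N=u\circ v_1\star(v\circ v_1)^{-1}\star u_1\circ v$ is the numerator of $g\star h$. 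For the second expression I would write $\hat f(g\star h)=f(N)\circ f(v\circ v_1)'$ and cancel $\delta$ against $f(v\circ v_1)'$, again using $\circ$-commutativity, to get $f(N)$ once more. The two sides therefore agree, and injectivity of the translation yields $\hat f(g\star h)=\hat f(g)\star\hat f(h)$; together with the relation $\hat f\iota=f$ and the uniqueness above, this completes the proof.
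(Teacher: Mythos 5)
Your proposal is correct, and its skeleton matches the paper's: construct $\hat{f}$ from the universal property of the Grothendieck group of $W_\circ$, get uniqueness because $\iota(W)$ generates $(B(W),\circ)$, and then verify compatibility with $\star$ by showing $\hat{f}\left(\frac{u}{v}\star\frac{u_1}{v_1}\right)=f(u)\circ f(v)'\star f(u_1)\circ f(v_1)'$. Where you genuinely differ is in how that key identity is established. The paper proves it in one line by distributing $\circ\,(v\circ v_1)'$ over the numerator, i.e.\ by applying right brace distributivity in $B$ with multiplier $(v\circ v_1)'$; read literally this needs justification, since $B$ is only a (left) skew brace and right distributivity is inherited from $W$ only for elements of $f(W)$, which need not contain the circle inverse $(v\circ v_1)'$. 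Your argument closes exactly this gap: composing with the injective left translation by $\delta=f(v\circ v_1)$, you use only left brace distributivity (valid in any skew brace) and the $\circ$-commutativity of $f(W)$ (hence also with $\circ$-inverses of its elements) to collapse both $\delta\circ(\hat{f}(g)\star\hat{f}(h))$ and $\delta\circ\hat{f}(g\star h)$ to $f(u\circ v_1\star(v\circ v_1)^{-1}\star u_1\circ v)$. So your route costs a slightly longer computation but invokes nothing beyond the stated hypotheses, whereas the paper's is shorter but leaves the right-distributivity step implicit. Two further details you make explicit that the paper dispatches silently: the Grothendieck universal property requires a commutative target, which you supply by observing that $f(W)$ generates an abelian subgroup of $(B,\circ)$; and you actually verify that $\iota$ respects $\star$ rather than asserting it "by definition".
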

\begin{proof}
    By definition, $\iota$ is a wire morphism. The map $\hat{f}$ is the one from the universal property of the Grothendieck group. We verify that this is a group morphism from $B(W)_\star$ to $B_\star$. Let $\frac{u}{v},\frac{u_1}{v_1}\in B(W)$, we have 
    \begin{align*}
    \hat{f}\left(\frac{u}{v}\star\frac{u_1}{v_1}\right)=&(u\circ v_1\star (v\circ v_1)^{-1}\star u_1\circ v)\circ(v\circ v_1)'\\
    =& u\circ v'\star u_1\circ v_1'\\
    =& \hat{f}\left(\frac{u}{v}\right)\star\hat{f}\left(\frac{u_1}{v_1}\right).\qedhere
    \end{align*}
\end{proof}
As this will be useful in the sequel, we develop the rudiments of the theory of wires, following the already well established theory of skew braces.
\begin{proposition}
\label{rem:actions}
    If $W$ is a left wire, there is a canonical left action by group \emph{endomorphisms} 
\begin{equation*}
    \lambda\colon W_\circ\to \End(W_\star),\quad \lambda_u(v)=u^{-1}\star u\circ v.\\
\end{equation*}
    If $W$ is a right wire, there is a canonical right action by group endomorphisms
    \begin{equation*}
        \rho\colon W_\circ\to \End(W_\star), \quad\rho_u(v)=v\circ u\star u^{-1}.
    \end{equation*}
\end{proposition}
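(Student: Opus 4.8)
The plan is to treat the left-wire statement directly and then obtain the right-wire statement from the duality remark that $(W,\star,\circ)$ is a left wire if and only if $(W,\star,\circ^{\mathrm{op}})$ is a right wire. Two things must be verified: that each $\lambda_u$ genuinely lands in $\End(W_\star)$, and that $\lambda$ is a monoid morphism from $W_\circ$ into $(\End(W_\star),\circ)$, which is precisely what it means for $\lambda$ to be a left action by endomorphisms.

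First I would check that $\lambda_u$ is a $\star$-endomorphism. Expanding $\lambda_u(b\star c)=u^{-1}\star u\circ(b\star c)$ and applying left brace distributivity~\eqref{eq:leftdistrib} to the term $u\circ(b\star c)$ gives $u^{-1}\star u\circ b\star u^{-1}\star u\circ c$, which under the precedence convention is literally $\lambda_u(b)\star\lambda_u(c)$; no cancellation is even needed. That $\lambda_u(e)=e$ follows from $u\circ e=u$, since $\star$ and $\circ$ share the neutral element $e$.

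Next I would establish the action property. The equality $\lambda_e=\id$ is immediate. For $\lambda_{u\circ v}=\lambda_u\circ\lambda_v$, I would compute $\lambda_u(\lambda_v(w))=u^{-1}\star u\circ(v^{-1}\star v\circ w)$, apply~\eqref{eq:leftdistrib} to the inner expression (with $b=v^{-1}$, $c=v\circ w$), and use associativity of $\circ$ to rewrite $u\circ(v\circ w)$ as $(u\circ v)\circ w$. This yields
\[
\lambda_u(\lambda_v(w))=u^{-1}\star u\circ v^{-1}\star u^{-1}\star (u\circ v)\circ w.
\]
Comparing this with $\lambda_{u\circ v}(w)=(u\circ v)^{-1}\star (u\circ v)\circ w$, it remains only to identify the two prefixes, that is, to prove the identity $u^{-1}\star u\circ v^{-1}\star u^{-1}=(u\circ v)^{-1}$.

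The key to this last point is the endomorphism property already obtained: since $\lambda_u$ respects $\star$, we have $\lambda_u(v^{-1})=\lambda_u(v)^{-1}$, i.e.\ $u^{-1}\star u\circ v^{-1}=(u^{-1}\star u\circ v)^{-1}=(u\circ v)^{-1}\star u$. Right-multiplying by $u^{-1}$ gives exactly the required $u^{-1}\star u\circ v^{-1}\star u^{-1}=(u\circ v)^{-1}$, completing the computation. I expect this inverse-bookkeeping step—the interaction of the $\star$-inverse with the $\circ$-product—to be the only non-formal point; everything else is a mechanical use of distributivity and associativity. Finally, the right-wire statement for $\rho_u(v)=v\circ u\star u^{-1}$ follows either formally from the $\circ^{\mathrm{op}}$ duality or by the mirror computation using right brace distributivity~\eqref{eq:rightdistrib}, with the roles of left- and right-multiplication interchanged so that $\rho$ becomes a right action, $\rho_{u\circ v}=\rho_v\circ\rho_u$.
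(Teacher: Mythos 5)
Your proof of the left-wire half is correct, and it is worth noting that the paper itself states this proposition with no proof at all (it is the standard $\lambda$-map argument from skew brace theory, adapted to the case where $(W,\circ)$ is merely a monoid), so your write-up supplies precisely the routine verification the paper omits. In particular, your handling of the only delicate point—deriving the prefix identity $u^{-1}\star u\circ v^{-1}\star u^{-1}=(u\circ v)^{-1}$ from the already-established endomorphism property $\lambda_u(v^{-1})=\lambda_u(v)^{-1}$—is exactly right.

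One caveat concerns your last sentence. The paper's duality remark flips only the circle operation, and applying the left-wire result to $(W,\star,\circ^{\mathrm{op}})$ yields the right action $v\mapsto u^{-1}\star v\circ u$, which is \emph{not} $\rho_u$: the two maps differ by conjugation by $u$ in $(W,\star)$, and $\star$ is not assumed commutative. So the one-sided $\circ^{\mathrm{op}}$ duality alone does not prove the statement for $\rho_u(v)=v\circ u\star u^{-1}$ as written. To make the formal duality work you must reverse \emph{both} operations: $(W,\star,\circ)$ is a right wire if and only if $(W,\star^{\mathrm{op}},\circ^{\mathrm{op}})$ is a left wire, in which case $\lambda_u$ computed in the latter is literally $\rho_u$, endomorphisms of $(W,\star^{\mathrm{op}})$ coincide with endomorphisms of $(W,\star)$, and a left action of $(W,\circ^{\mathrm{op}})$ is the same thing as a right action of $(W,\circ)$. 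Alternatively, the mirror computation you mention does go through verbatim: \eqref{eq:rightdistrib} gives $\rho_v(\rho_u(w))=(w\circ u)\circ v\star v^{-1}\star u^{-1}\circ v\star v^{-1}$, and the suffix collapses to $(u\circ v)^{-1}$ because $\rho_v(u^{-1})=\rho_v(u)^{-1}=v\star(u\circ v)^{-1}$, yielding $\rho_{u\circ v}=\rho_v\circ\rho_u$. With either repair the proof is complete; just do not cite the paper's $\circ^{\mathrm{op}}$-only duality as sufficient for the $\rho$ statement.
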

\begin{definition}
    An \emph{ideal} $I$ of a left wire $W$ is a normal subgroup of $W_\star$ such that $\lambda_u(I)\subset I$ and $(u\star I)\circ v\subset u\circ v \star I$ for all $u,v\in W$.

    An ideal $I$ of a right wire $W$ is a normal subgroup of $W_\star$ such that $\rho_v(I)\subset I$ and $u\circ(v\star I)\subset u\circ v \star I$ for all $u,v\in W$. 
\end{definition}
\begin{remark}
\label{rem:idealwire}
    Note that if $W$ is a wire, then an ideal of $W$ as a left wire is the same as an ideal of $W$ as a right wire. Moreover in this case, an ideal $I$ is a normal subgroup of $W$ such that $\lambda_u(I)\subset I$ and $\rho_u(I)\subset I$ for all $u\in W$. Or equivalently $u\circ I,I\circ u\subset u\star I=I\star u$ for all $u\in W$.
\end{remark}
\begin{definition}
    Let $f\colon W\to V$ be a wire morphism. We define $\ker(f)$ to be the set of elements $u\in W$ such that $f(u)=e$. 
\end{definition}
We present the last results of this section only for left wires, but they are also true for right wires.
\begin{proposition}
\label{pro:caraideal}
    Let $W$ be a left wire and $I\subset W$ such that $(I,\star)$ is a normal subgroup of $(W,\star)$. The following assertions are equivalent : \begin{enumerate}
        \item $I$ is an ideal of $W$.
        \item $I$ is the kernel of a wire morphism.
        \item The group quotient $W/I$ inherits a wire structure such that the canonical projection $W \to W/I$ is a wire morphism.
    \end{enumerate}
\end{proposition}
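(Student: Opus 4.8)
The plan is to prove the three assertions equivalent by establishing the cyclic chain of implications $(1) \Rightarrow (3) \Rightarrow (2) \Rightarrow (1)$, since the two operations and the distributivity all transport naturally along a quotient map. Throughout I keep in mind that $(I,\star)$ is assumed to be a normal subgroup, so left and right cosets coincide, $[u] = u \star I = I \star u$, and $[a] = [b]$ exactly when $a = b \star i$ for some $i \in I$. The implication $(3) \Rightarrow (2)$ is then essentially immediate: if $W/I$ carries a wire structure for which the projection $\pi \colon W \to W/I$ is a wire morphism, then $I = \ker(\pi)$, because $\pi(u) = [u]$ equals the neutral class $[e]$ precisely when $u \in I$.

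For $(2) \Rightarrow (1)$, I would suppose $I = \ker(f)$ for a wire morphism $f \colon W \to V$. As normality of $(I,\star)$ is already granted, only the two structural conditions in the definition of an ideal remain. For $\lambda_u(I) \subset I$, I would apply $f$ to $\lambda_u(i) = u^{-1} \star u \circ i$ and use that $f$ preserves both $\star$ and $\circ$, together with $f(i) = e$ and $f(u) \circ e = f(u)$, to conclude $f(\lambda_u(i)) = f(u)^{-1} \star f(u) = e$, so $\lambda_u(i) \in I$. For the condition $(u \star I) \circ v \subset u \circ v \star I$, I would instead check that $(u \circ v)^{-1} \star (u \star i) \circ v$ lies in $\ker(f) = I$ by the same direct computation, using only that $f$ is a wire morphism and $f(i) = e$.

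The heart of the argument is $(1) \Rightarrow (3)$. Assuming $I$ is an ideal, $(W/I, \star)$ is already a group by normality, so the real task is to define $[u] \circ [v] := [u \circ v]$ and prove it well defined; once that is done, associativity of $\circ$, the neutral element, and left brace distributivity all descend automatically from $W$ because $\pi$ is surjective and preserves both operations, making $\pi$ a wire morphism. To verify well-definedness I would alter the two arguments one at a time. Rewriting via left brace distributivity gives $u \circ (v \star i) = u \circ v \star u^{-1} \star u \circ i = u \circ v \star \lambda_u(i)$, so the condition $\lambda_u(I) \subset I$ shows that changing the right argument by $I$ perturbs the product by an element of $I$ on the right; the condition $(u \star I) \circ v \subset u \circ v \star I$ handles the left argument. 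Combining the two steps and using $I \star I = I$ yields $(u \star i) \circ (v \star j) \in u \circ v \star I$ for all $i,j \in I$, which is exactly the required independence of the coset representatives.

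I expect the main obstacle to be organizing this well-definedness computation so that the two ideal conditions are applied in the correct order and on the correct side. Because $\star$ need not be commutative, care is needed to ensure the error terms accumulate as the \emph{right} cosets $u \circ v \star I$ rather than left cosets, and the identity $u \circ (v \star i) = u \circ v \star \lambda_u(i)$ must be extracted cleanly from left brace distributivity. Everything outside this single check amounts to transporting identities that already hold in $W$ through the surjective projection, and so is routine.
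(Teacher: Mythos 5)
Your proof is correct and follows essentially the same route as the paper: the paper also treats $(3)\Rightarrow(2)$ and $(2)\Rightarrow(1)$ as direct computations (which you simply write out) and concentrates, exactly as you do, on the well-definedness of $[u]\circ[v]=[u\circ v]$ for $(1)\Rightarrow(3)$, obtained via left brace distributivity $(u\star i)\circ(v\star j)=(u\star i)\circ v\star\lambda_{u\star i}(j)$ together with the two ideal conditions. Your worry about the order of applying the conditions is unfounded: since both error terms land in $I$ on the right and $I\star I=I$, either order works.
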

\begin{proof}
    The implications $(3)\Rightarrow (2)$ and $(2)\Rightarrow (1)$ are obtained by direct computations. 
    To obtain $(1)\Rightarrow (3)$ it is enough to show that the binary operation $[u]\circ[v]=[u\circ v]$ is well-defined on the group quotient $W/I$. Let $u,v\in W$ and $i,j\in I$, we have
    \[
        [(u\star i)\circ (v\star j)]= [(u\star i)\circ v \star \lambda_{u\star i}(j)]= [u\circ v].\qedhere
    \]
\end{proof}
\begin{proposition}
    Let $I$ be an ideal of a left wire $W$. Let $f\colon W\to V$ be a wire morphism such that $I\subset \ker(f)$. Then there exists a unique wire morphism $\bar{f}\colon W/I\to V$ such that the diagram
    \[\begin{tikzcd}
	{W/I} & V \\
	W
	\arrow[dashed,"{\exists !\bar{f}}", from=1-1, to=1-2]
	\arrow[from=2-1, to=1-1]
	\arrow["f"', from=2-1, to=1-2]
\end{tikzcd}\]
is commutative.
\end{proposition}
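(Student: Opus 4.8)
The plan is to mimic the classical factorization theorem for groups, using the wire structure on $W/I$ already supplied by Proposition~\ref{pro:caraideal}. Since $I$ is an ideal of the left wire $W$, that proposition guarantees that the group quotient $W/I$ carries a wire structure with $[u]\star[v]=[u\star v]$ and $[u]\circ[v]=[u\circ v]$, and that the canonical projection $\pi\colon W\to W/I$, $u\mapsto[u]$, is a wire morphism. I would then define $\bar f$ by the only formula compatible with commutativity of the triangle, namely $\bar f([u])=f(u)$, and check in turn that it is well defined, a wire morphism, compatible with $\pi$, and unique.

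The first and only delicate step is well-definedness. If $[u]=[v]$ then $u^{-1}\star v\in I$, where I use that $I$ is a normal subgroup of $W_\star$, so that the equivalence classes are exactly the cosets of $I$. Because $I\subset\ker f$ and $f$ is a group morphism for $\star$, we obtain $f(u)^{-1}\star f(v)=f(u^{-1}\star v)=e$, whence $f(u)=f(v)$. Thus $\bar f([u])$ does not depend on the chosen representative, and this is precisely the point at which the hypothesis $I\subset\ker f$ enters.

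Next I would verify that $\bar f$ is a wire morphism, which is immediate from the definition of the operations on $W/I$ together with the fact that $f$ is already a wire morphism: for the additive operation, $\bar f([u]\star[v])=\bar f([u\star v])=f(u\star v)=f(u)\star f(v)=\bar f([u])\star\bar f([v])$, and the identical computation with $\circ$ in place of $\star$ settles the circle operation. Commutativity of the triangle, $\bar f\circ\pi=f$, then holds by construction, since $\bar f(\pi(u))=\bar f([u])=f(u)$ for all $u\in W$.

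Finally, uniqueness follows from the surjectivity of $\pi$: if $g\colon W/I\to V$ is any wire morphism with $g\circ\pi=f$, then for every $u\in W$ we have $g([u])=g(\pi(u))=f(u)=\bar f([u])$, so $g=\bar f$. I do not expect any genuine obstacle here; the argument is a routine universal-property verification, and the sole substantive ingredient beyond it is the quotient wire structure of Proposition~\ref{pro:caraideal}, with the well-definedness in the second paragraph being the one place that truly requires the ideal and kernel hypotheses.
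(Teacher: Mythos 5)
Your proposal is correct and follows exactly the same route as the paper, whose proof is the one-line observation that $\bar f([u])=f(u)$ is well defined by the hypothesis $I\subset\ker(f)$; you have simply written out the routine verifications (well-definedness via normality of $I$, the morphism property, commutativity of the triangle, and uniqueness from surjectivity of the projection) that the paper leaves implicit.
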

\begin{proof}
    The map $\hat{f}$ is defined by sending an element $[u]$ to $f(u)$; it is well-defined by hypothesis.
\end{proof}
\begin{definition}
    \label{def:comideal}
    Let $W$ be a wire. Denote by $[W,W]$ the usual commutator subgroup of $W_\star$. By Remark~\ref{rem:idealwire}, $[W,W]$ is an ideal of $W$.
\end{definition}
\begin{definition}
    Let $W$ be a left wire. We define $W^2$ to be the subgroup of $W_\star$ generated by the elements $u\cdot v=u^{-1}\star u\circ v\star v^{-1}$ and $W^2_{\mathrm{op}}$ to be the subgroup of $W_\star$ generated by the elements $u\cdot_{\mathrm{op}} v=u^{-1}\star v\circ u\star v^{-1}$
\end{definition}
\begin{proposition}
    Let $W$ be a left wire. The subgroups $W^2$ and $W^2_{\mathrm{op}}$ are ideals. The wires $W/W^2$ and $W/W^2_{\mathrm{op}}$ are respectively a trivial and an almost trivial skew brace.
    \end{proposition}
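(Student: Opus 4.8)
The plan is to handle both statements by the same mechanism: reduce each to a single normality computation, and then let the characterization of ideals in Proposition~\ref{pro:caraideal} do all the remaining work. For the first statement I would equip the underlying group $W_\star/W^2$ with the \emph{trivial} skew brace structure (i.e. $\circ=\star$) and exhibit the canonical projection as a wire morphism onto it with kernel exactly $W^2$; then the implication $(2)\Rightarrow(1)$ of Proposition~\ref{pro:caraideal} gives at once that $W^2$ is an ideal, while the inherited quotient $\circ$ satisfies $[u]\circ[v]=[u\circ v]=[u\star v]=[u]\star[v]$, identifying $W/W^2$ with the trivial skew brace. This route is preferable to verifying the three defining conditions of an ideal directly, because the condition $(u\star I)\circ v\subset u\circ v\star I$ is awkward in a left wire: there is no right distributivity available to expand a product sitting in the left $\circ$-argument, whereas the kernel description avoids it entirely.

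The heart of the matter is thus to prove that $W^2$ is a normal subgroup of $W_\star$. Writing $u\cdot v=\lambda_u(v)\star v^{-1}$ and using that each $\lambda_a$ is a group endomorphism of $W_\star$ (Proposition~\ref{rem:actions}), left distributivity yields the identity
\[
a\cdot(b\star c)=(a\cdot b)\star b\star (a\cdot c)\star b^{-1}\qquad(a,b,c\in W),
\]
which rearranges to $b\star(a\cdot c)\star b^{-1}=(a\cdot b)^{-1}\star\bigl(a\cdot(b\star c)\bigr)$. As the right-hand side is a product of elements of $W^2$, this shows that an arbitrary conjugate of a generator of $W^2$ again lies in $W^2$; since $W^2$ is a subgroup, it is normal. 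Granting normality, the defining relation $u\cdot v=u^{-1}\star u\circ v\star v^{-1}$ rewrites as $u\circ v\star(u\star v)^{-1}=u\star(u\cdot v)\star u^{-1}\in W^2$, so $u\circ v\equiv u\star v\pmod{W^2}$ for all $u,v$. This is exactly what makes the projection respect $\circ$, completing the first statement.

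For $W^2_{\mathrm{op}}$ I would run the mirror argument, now expanding the \emph{first} $\circ$-argument. Applying left distributivity to $v\circ(u\star u')$ gives
\[
(u\star u')\cdot_{\mathrm{op}} v=u'^{-1}\star(u\cdot_{\mathrm{op}} v)\star u'\star(u'\cdot_{\mathrm{op}} v),
\]
which rearranges to $u'^{-1}\star(u\cdot_{\mathrm{op}} v)\star u'=\bigl((u\star u')\cdot_{\mathrm{op}} v\bigr)\star(u'\cdot_{\mathrm{op}} v)^{-1}\in W^2_{\mathrm{op}}$. As $u'$ ranges over $W$ this again places every conjugate of a generator inside $W^2_{\mathrm{op}}$, so $W^2_{\mathrm{op}}$ is normal. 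Normality then turns the definition of $u\cdot_{\mathrm{op}} v$ into $u\circ v\star(v\star u)^{-1}=v\star(v\cdot_{\mathrm{op}} u)\star v^{-1}\in W^2_{\mathrm{op}}$, i.e. $u\circ v\equiv v\star u\pmod{W^2_{\mathrm{op}}}$. Hence the projection realizes $W/W^2_{\mathrm{op}}$ as the \emph{almost trivial} skew brace on $W_\star/W^2_{\mathrm{op}}$ (where $[u]\circ[v]=[v]\star[u]$), and Proposition~\ref{pro:caraideal} again delivers both that $W^2_{\mathrm{op}}$ is an ideal and the stated structure.

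The only genuine obstacle is locating the two displayed identities; once they are in hand, normality—and with it everything else—is immediate. I expect the bookkeeping in verifying them (tracking the operator precedence of $\circ$ over $\star$, and keeping straight which $\circ$-argument is being expanded via left distributivity) to be the most error-prone step, but it is purely mechanical given left distributivity and the homomorphism property of the maps $\lambda_a$.
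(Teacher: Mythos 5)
Your proof is correct and follows essentially the same route as the paper: your two displayed conjugation identities are exactly the paper's identities $(u\cdot v)^{w^{-1}}=(u\cdot w)^{-1}\star (u\cdot(w\star v))$ and $(u\cdot_{\mathrm{op}}w)^{v}=((u\star v)\cdot_{\mathrm{op}}w)\star (v\cdot_{\mathrm{op}}w)^{-1}$ (up to relabeling), and like the paper you then conclude via Proposition~\ref{pro:caraideal} together with the congruences $[u\circ v]=[u\star v]$ and $[u\circ v]=[v\star u]$ in the respective quotients. The only difference is that you spell out the derivations the paper leaves implicit.
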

    \begin{proof}
To show that they are normal subgroups, one can use the identities \[(u\cdot v)^{w^{-1}}=(u\cdot w)^{-1}\star (u\cdot(w\star v))\quad \text{and}\quad (u\cdot_{\mathrm{op}}w)^{v}=((u\star v)\cdot_{\mathrm{op}}w)\star (v\cdot_{\mathrm{op}}w)^{-1}.\]
    Once we have shown that they are normal subgroups, it follows that $W^2$ and $W^2_{\mathrm{op}}$ are ideals by Proposition~\ref{pro:caraideal}, since in the quotient group $W / W^2$ we have $ [u \circ v] = [u \star v] $, and in $ W / W^2_{\mathrm{op}}$, we have  $[u \circ v] = [v \star u]$. 
\end{proof}
\begin{remark}
    One could also define $W^2$ and $W^2_{\mathrm{op}}$ in exactly the same way in a right wire $W$. A symmetric argument shows that in a right wire,  they are also ideals.
\end{remark}
\section{Free wires}
\label{sec:freewire}
     To construct the free wire on a set $X$ we will adapt the construction of $R(X)$ of Remark~\ref{rem:discfreecombr}, to a setting where \(\star\) and \(\circ\) are not commutative. The intuitive idea is to take the free semigroup $(M(X),\circ)$ generated by $X$ as the set of monomials. Then, we take the free group $(P(X),\star)$ generated by $M(X)$ as the set of polynomials. Now we want to extend the operation of the semigroup $M(X)$ to $P(X)$. A naive approach is to say that we extend it by brace distributivity, but this doesn't work.
    
    Indeed, let us look at the example where we try to define the circle operation between two words of length two, $(m_1\star m_2)\circ (m_3\star m_4)$. We could either distribute on the right and then on the left, which would give us \[m_1\circ m_3 \star m_3^{-1} \star m_2\circ m_3\star m_2^{-1}\star m_1^{-1}\star m_1\circ m_4\star m_4^{-1}\star m_2\circ m_4.\]
    Or we could do it the other way around. This time we obtain 
    \[m_1\circ m_3\star m_1^{-1}\star m_1\circ m_4 \star m_4^{-1}\star m_3^{-1}\star m_2\circ m_3\star m_2^{-1}\star m_2\circ m_4.\] 
    We see that the results are not the same. However they would coincide if the elements 
    \[m_3\cdot_{\mathrm{op}}m_2=m_3^{-1} \star m_2\circ m_3\star m_2^{-1}\quad \text{and}\quad m_1\cdot m_4=m_1^{-1}\star m_1\circ m_4\star m_4^{-1},
    \]
    commute. This motivates the following proposition, which is a direct generalization of a result of Trappeniers on two-sided skew braces~\cite{trappeniers2023two}.

    \begin{proposition}
    \label{pro:commutator trivial}
        Let $W$ be a wire. Then $[W^2,W_{\mathrm{op}}^2]=0$.
    \end{proposition}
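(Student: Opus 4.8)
The plan is to exploit the very computation that motivates the proposition: expanding the product $(m_1 \star m_2)\circ(m_3\star m_4)$ of two ``length-two'' elements in the two possible orders. Since $W$ is a genuine wire, $\circ$ is a well-defined (associative) monoid operation and both distributivities~\eqref{eq:leftdistrib} and~\eqref{eq:rightdistrib} hold as identities, so the two expansions are forced to coincide; reading off the difference between them yields exactly the desired commutation relation, with no ambiguity.

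Concretely, I would first apply right brace distributivity~\eqref{eq:rightdistrib} to $(m_1\star m_2)\circ(m_3\star m_4)$ and then left brace distributivity~\eqref{eq:leftdistrib} to each factor $m_i\circ(m_3\star m_4)$, obtaining the first expansion displayed in the discussion preceding the statement. Then I would carry out the expansion in the opposite order---left distributivity first, then right---to obtain the second expansion. Both expressions begin with the letter $m_1\circ m_3$ and end with $m_2\circ m_4$; after cancelling these common prefixes and suffixes, the first expansion reduces to $(m_1\cdot m_4)\star(m_3\cdot_{\mathrm{op}} m_2)$ and the second to $(m_3\cdot_{\mathrm{op}} m_2)\star(m_1\cdot m_4)$, once one recognises $m_1^{-1}\star m_1\circ m_4\star m_4^{-1}=m_1\cdot m_4$ and $m_3^{-1}\star m_2\circ m_3\star m_2^{-1}=m_3\cdot_{\mathrm{op}} m_2$. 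Equating the two therefore gives $(m_1\cdot m_4)\star(m_3\cdot_{\mathrm{op}} m_2)=(m_3\cdot_{\mathrm{op}} m_2)\star(m_1\cdot m_4)$ for all $m_1,m_2,m_3,m_4\in W$.

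Since the four elements are arbitrary and independent, choosing $m_1,m_4$ freely realises an arbitrary generator $u\cdot v$ of $W^2$, while choosing $m_2,m_3$ freely realises an arbitrary generator $s\cdot_{\mathrm{op}} t$ of $W^2_{\mathrm{op}}$; the displayed identity thus says precisely that every generator of $W^2$ commutes with every generator of $W^2_{\mathrm{op}}$. To pass from generators to the whole subgroups I would invoke the standard fact that the centraliser of a subset of a group is a subgroup: the generators of $W^2$ lie in the centraliser of the generating set of $W^2_{\mathrm{op}}$, hence $W^2$ itself lies in that centraliser, and then symmetrically $W^2_{\mathrm{op}}$ lies in the centraliser of $W^2$. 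Therefore every element of $W^2$ commutes with every element of $W^2_{\mathrm{op}}$, which is exactly $[W^2,W^2_{\mathrm{op}}]=\{e\}$.

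I do not expect a genuine obstacle: the only real content is the two-fold expansion, which is already performed in the motivating discussion, so the argument is essentially a bookkeeping of cancellations followed by the elementary centraliser step. The one point requiring a little care is verifying that the common prefix $m_1\circ m_3$ and suffix $m_2\circ m_4$ cancel \emph{identically} in both expansions (rather than merely up to some commutator), which is why I would write both expansions in fully factored $\star$-form before comparing them.
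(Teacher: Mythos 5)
Your proof is correct and is essentially the paper's own argument: the paper likewise derives $[u_1\cdot v_1,\,u_2\cdot_{\mathrm{op}}v_2]=e$ from the two-fold expansion of $(m_1\star m_2)\circ(m_3\star m_4)$ given in the motivating discussion, and then passes from generators to the full subgroups by elementary group theory (the paper via the identities $[u,v]^{-1}=[v,u]$ and $[u,v\star w]=[u,v]\star[u,w]^{v^{-1}}$, you via the equivalent centraliser-is-a-subgroup fact). The only slip is cosmetic: you swapped which expansion yields $(m_1\cdot m_4)\star(m_3\cdot_{\mathrm{op}}m_2)$ and which yields $(m_3\cdot_{\mathrm{op}}m_2)\star(m_1\cdot m_4)$, which does not affect the conclusion.
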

    \begin{proof}
        It is now clear that $[u_1\cdot v_1,u_2\cdot_{\mathrm{op}}v_2]=e$ for all $u_1,u_2,v_1,v_2\in W$. The rest can be proved using the identities $[u,v]^{-1}=[v,u]$, and $[u,v\star w]=[u,v]\star [u,w]^{v^{-1}}$.
    \end{proof}
    \begin{notation}
        When we consider elements of a free group on a set of  generators $Y$ (for example $P(X)$ with $Y=M(X)$) we will denote them by \[\prod_{i=1}^nu_i^{\epsilon_i} = u_1^{\epsilon_1} \star u_2^{\epsilon_2} \star \dots \star u_n^{\epsilon_n},\] where $n$ is some non negative integer, the $u_i$ are elements of $Y$ and $\epsilon_i$ are elements of $\{1,-1\}$. We emphasize that the operation \( \star \) is non-commutative, and thus the order of the terms is essential. Note also that such an expression is not necessarily reduced.
    \end{notation}

    We argued that for two words $u,v\in P(X)$ of length greater than or equal to two, it is not possible to define a binary operation extending the one of $M(X)$, because of the multiple ways to distribute the $\circ$ operation over the $\star$ operation. However, it is possible when one of the words involved is a monomial, that is an element of $M(X)$. Indeed, Let $l\in M(X)$ and $u\in P(X)$ with $u=\prod_{i=1}^nu_i^{\epsilon_i}$. Then we can define $l\circ u=l\star\prod_{i=1}^n\lambda_l(u_i)^{\epsilon_i}$ and $u\circ l =\left(\prod_{i=1}^n\rho_l(u_i)^{\epsilon_i}\right)\star l$ where $\lambda_l(u_i)=l^{-1}\star l\circ u_i$ and $\rho_l(u_i)=u_i\circ l\star l^{-1}$. These formulas do not depend on the representative of $u$. Indeed, let $1\leq k<n$, and suppose that $u_{k+1}^{\epsilon_{k+1}}=u_k^{-\epsilon_k}$ then, we have
    \begin{align}
    \begin{split}
    \label{eq:welldefined}
    l\circ u&=l\star\left(\prod_{i=1}^{k-1}\lambda_l( u_i)^{\epsilon_i}\right)\star\lambda_l( u_k)^{\epsilon_k}\star\lambda_l(u_k)^{-\epsilon_k}\star\left(\prod_{i=k+1}^{n}(\lambda_l(u_i)^{\epsilon_i}\right)\\
    &=l\circ \left(\prod_{\substack{i=1\\i\neq k,k+1}}^nu_i^{\epsilon_i}\right).
    \end{split}
    \end{align}
    A symmetric computations shows that $u\circ l$ does not depend on the representative of $u$ either.
To extend this operation to general words, we have at least two choices.

    \begin{definition}
    \label{def:operations}
        Let $X$ be a set, for elements $u,v\in P(X)$ with $u=\prod_{i=1}^nu_i^{\epsilon_i}$ and $v=\prod_{i=1}^mv_i^{\delta_i}$ we define two operations 
        
        \begin{align*}
            &u\circ_r v=\left(\prod_{i=1}^n\rho_v(u_i)^{\epsilon_i}\right)\star v, \quad \text{and} \\
            &u\circ_l v= u\star\left(\prod_{i=1}^m\lambda_u(v_i)^{\delta_i}\right),
        \end{align*}
        where $\lambda_u(v_i)=u^{-1}\star u\circ v_i$ and $\rho_v(u_i)=u_i\circ v\star v^{-1}$.
    \end{definition}
    \begin{remark}
        A similar computation than in~\eqref{eq:welldefined} shows that the two operations do not depend on the word representatives.
    \end{remark}
    \begin{remark}
        The empty word $e$ can be written as an empty sum, so that \[u\circ_r e=e\circ_r u= e\circ_lu=u\circ_le=u.\]
    \end{remark}
The two operations $\circ_l$ and $\circ_r$ follow respectively left and right brace distributivity, i.e for all $u,v,w\in P(X)$,
\begin{align*}
    (u\star v)\circ_r w&=u\circ_r w\star w^{-1}\star v\circ_r w,\\
    u\circ_l(v\star w)&=u\circ_l v\star u^{-1}\star u\circ_l w.
\end{align*} 
\begin{proposition}
    \label{pro:associative}
    $P(X)^l=(P(X),\star,\circ_l)$ and $P(X)^r=(P(X),\star,\circ_r)$ are respectively left and right wires.
\end{proposition}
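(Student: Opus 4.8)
The plan is to take the underlying group structure as given and concentrate on the two remaining wire axioms for each operation: associativity of $\circ_l$ (resp.\ $\circ_r$) together with left (resp.\ right) brace distributivity. Since $(P(X),\star)$ is by construction the free group on $M(X)$, it is already a group, and the remark preceding Definition~\ref{def:operations} guarantees that the empty word $e$ is a two-sided $\circ_l$- and $\circ_r$-identity. The key device I would introduce is, for each $u\in P(X)$, the unique group endomorphism $\lambda_u$ of $(P(X),\star)$ determined on the free generators $l\in M(X)$ by $\lambda_u(l)=u^{-1}\star(u\circ l)$, where $u\circ l$ is the representative-independent monomial-on-the-right operation of~\eqref{eq:welldefined}. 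Comparing with Definition~\ref{def:operations}, this repackages the definition as the single formula $u\circ_l v=u\star\lambda_u(v)$, and dually $u\circ_r v=\rho_v(u)\star v$ for the group endomorphisms $\rho_v$ with $\rho_v(m)=(m\circ v)\star v^{-1}$ on generators $m\in M(X)$. A pleasant side effect is that $\lambda_u(v)$ depends only on the group element $v$, which re-proves the well-definedness remark.

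With this formulation left brace distributivity is immediate: since $\lambda_u$ is a homomorphism, $u\circ_l(v\star w)=u\star\lambda_u(v)\star\lambda_u(w)=(u\circ_l v)\star u^{-1}\star(u\circ_l w)$, which is exactly the identity recorded just before the proposition (the right case follows identically from $\rho_v$ being a homomorphism). For associativity I would expand both parenthesizations with $u\circ_l v=u\star\lambda_u(v)$, obtaining $(u\circ_l v)\circ_l w=u\star\lambda_u(v)\star\lambda_{u\circ_l v}(w)$ and $u\circ_l(v\circ_l w)=u\star\lambda_u(v)\star\lambda_u(\lambda_v(w))$. Hence associativity of $\circ_l$ is equivalent to the action identity $\lambda_{u\circ_l v}=\lambda_u\circ\lambda_v$. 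As both sides are endomorphisms of the free group $(P(X),\star)$, it suffices to check they agree on each free generator $l\in M(X)$, i.e.\ to prove $\lambda_{u\circ_l v}(l)=\lambda_u(\lambda_v(l))$.

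The main work—and the step I expect to be the real obstacle—is this generator-level identity. Here I would unwind the right-hand side via $\lambda_v(l)=v^{-1}\star\rho_l(v)\star l$ and push $\lambda_u$ through using that it is a homomorphism, so that everything is expressed through the two families $\lambda_a$ and $\rho_l$ applied to the individual monomials occurring in $u$ and $v$. I would then argue by induction on the $\star$-lengths of $u$ and $v$, peeling off one letter at a time with the homomorphism property; the base case, where $u$ and $v$ are single monomials $a,b\in M(X)$, is precisely the computation $(ab)\circ l=a\circ(b\circ l)$, so the entire identity is ultimately grounded in the associativity of concatenation in the free semigroup $M(X)$. The difficulty is purely combinatorial bookkeeping: controlling the order of the noncommuting $\star$-factors and the interaction between the $\lambda$'s and $\rho$'s. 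Finally, the statement for $\circ_r$ is proved symmetrically, replacing $\lambda$ by the right action $\rho$ and reducing associativity to $\rho_{u\circ_r v}=\rho_v\circ\rho_u$, again bottoming out in associativity in $M(X)$.
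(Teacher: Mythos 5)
Correct, and essentially the paper's own route: your reduction of associativity to the generator-level identity $\lambda_{u\circ_l v}(l)=\lambda_u(\lambda_v(l))$, i.e.\ $(u\circ_l v)\circ l=u\circ_l(v\circ l)$ for $l\in M(X)$, is exactly the paper's first step (expanding over the letters of $w$ to reduce to monomial $w$), and your induction on word length with base case $(a\circ b)\circ l=a\circ(b\circ l)$ accomplishes what the paper does by a second expansion reducing to monomial $v$, both bottoming out in associativity of $M(X)$. Packaging $\lambda_u$ and $\rho_v$ as free-group endomorphisms determined on the generators $M(X)$ is a tidier way to organize the paper's explicit word expansions (it also yields well-definedness of $\circ_l,\circ_r$ for free), but the underlying computation—peeling off letters via distributivity—is the same.
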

\begin{proof}
    It is left to see that the operations $\circ_l$ and $\circ_r$ are associative. We will only deal with $\circ_l$ as the argument to show that $\circ_r$ is associative is symmetric. Let $u,v,w\in P(X)$, suppose that $u=\prod_{i=1}^nu_i^{\epsilon_i}$, $v=\prod_{i=1}^mv_i^{\delta_i}$ and $w=\prod_{i=1}^lw_i^{\alpha_i}$. Then by the left brace distributivity,
    \[u\circ_l (v\circ_l w)= u\circ_lv\star \left(\prod_{i=1}^l \left((u\circ_l v)^{-1}\star u\circ_l (v\circ w_i)\right)^{\alpha_i}\right)\]
    and
    \[(u\circ_l v)\circ_l w= u\circ_lv\star \left(\prod_{i=1}^l \left((u\circ_l v)^{-1}\star(u\circ_l v)\circ w_i\right)^{\alpha_i}\right).\]
    Therefore, we may assume that $w\in M(X)$. Then,
    \[u\circ_l (v\circ w)=\left(\prod_{i=1}^m \left(u\circ_l (v_i\circ w)\star (u\circ w)^{-1}\right)^{\delta_i}\right)\star u\circ w\]
    and
    \begin{align*}
    (u\circ_l v)\circ w=&u\circ w\star\left(\prod_{i=1}^m \left((u\circ w)^{-1}\star(u\circ_l v_i)\circ w\right)^{\delta_i}\right)\\
    =&\left(\prod_{i=1}^m \left((u\circ_l v_i)\circ w\star (u\circ w)^{-1}\right)^{\delta_i}\right)\star u\circ w.
    \end{align*}
    Thus, we can suppose that $v\in M(X)$. Since $M(X)$ is associative we have, 
    \[
        u\circ (v\circ w) =(u\circ v)\circ w.\qedhere
    \]
\end{proof}
\begin{notation}
    We will denote the canonical actions (Definition~\ref{rem:actions}) of $P(X)^l$ by $\lambda$ and the one of $P(X)^r$ by $\rho$. Note that it is compatible with the notations introduced in Definition~\ref{def:operations}.
\end{notation}
\begin{lemma}
\label{lem:idealcrit}
    Let $I$ be a normal subgroup of $P(X)$. Then $I$ is an ideal of $P(X)^l$ if and only if $\lambda_u(I)\subset I$ and $\rho_s(I)\subset I$ for all $u\in P(X)$ and $s\in M(X)$. 
    
    Similarly, $I$ is an ideal of $P(X)^r$ if and only if $\rho_u(I)\subset I$ and $\lambda_s(I)\subset I$ for all $u\in P(X)$ and $s\in M(X)$
\end{lemma}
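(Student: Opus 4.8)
The plan is to peel the defining conditions of an ideal apart and match them against the two conditions in the statement. By the definition of an ideal of a left wire, $I$ is an ideal of $P(X)^l$ precisely when it is normal, satisfies $\lambda_u(I)\subset I$ for all $u\in P(X)$, and satisfies $(u\star I)\circ_l v\subset u\circ_l v\star I$ for all $u,v\in P(X)$. Since $I$ is assumed normal and the clause $\lambda_u(I)\subset I$ already appears on both sides of the desired equivalence, it suffices to show that, for a normal subgroup $I$, the condition
\[
(u\star i)\circ_l v\in u\circ_l v\star I\quad\text{for all }u,v\in P(X),\ i\in I,
\]
which I abbreviate by $(\mathrm{C})$, is equivalent to $\rho_s(I)\subset I$ for all $s\in M(X)$. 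The analogous statement for $P(X)^r$ will then follow from the evident left--right symmetry, exchanging $\circ_l\leftrightarrow\circ_r$, $\lambda\leftrightarrow\rho$, and the two arguments of the operation.

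First I would reduce $(\mathrm{C})$ to the case where the right argument $v=s$ is a single monomial. One direction is immediate. For the converse I would induct on the number of factors in a word $v=\prod_i v_i^{\delta_i}$, peeling off one generator $t=s^{\pm1}$ at a time and using the left brace distributivity $a\circ_l(v'\star t)=a\circ_l v'\star a^{-1}\star a\circ_l t$ of $\circ_l$. Substituting $(u\star i)^{-1}=i^{-1}\star u^{-1}$ and applying the induction hypothesis to $v'$ and the monomial case to $t$, every discrepancy that arises is a $\star$-conjugate of an element of $I$ and hence lies in $I$ by normality; the inverse-exponent case $t=s^{-1}$ is reduced to the case $t=s$ through the identity $u\circ_l s^{-1}=u\star(u\circ_l s)^{-1}\star u$, which follows from $u=u\circ_l(s\star s^{-1})$ and left brace distributivity. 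This bookkeeping with the non-commutative operation $\star$ is the step I expect to be the most delicate, even though each individual manipulation is routine; the only ingredients are normality and left brace distributivity.

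It then remains to analyse a single monomial $s\in M(X)$, and here lies the key observation: when the second argument is a monomial the two operations coincide, $a\circ_l s=a\circ s=a\circ_r s$, so the \emph{right} brace distributivity of $\circ_r$ becomes available and gives $(u\star i)\circ s=u\circ s\star s^{-1}\star i\circ s$. Consequently $(u\circ_l s)^{-1}\star(u\star i)\circ_l s=s^{-1}\star(i\circ s)$, an expression independent of $u$, so the monomial instance of $(\mathrm{C})$ is equivalent to $s^{-1}\star(i\circ s)\in I$ for all $i\in I$ and $s\in M(X)$. Finally, $\rho_s(i)=i\circ_r s\star s^{-1}=i\circ s\star s^{-1}$, whence $s^{-1}\star(i\circ s)=s^{-1}\star\rho_s(i)\star s$ is a $\star$-conjugate of $\rho_s(i)$; by normality it lies in $I$ if and only if $\rho_s(i)\in I$. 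Combining this with the reduction of the previous paragraph shows $(\mathrm{C})\iff\rho_s(I)\subset I$ for all $s\in M(X)$, which establishes the equivalence for $P(X)^l$, and the symmetric argument disposes of $P(X)^r$.
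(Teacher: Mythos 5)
Your proposal is correct and takes essentially the same approach as the paper: both arguments boil down to the observation that against a monomial $s$ the circle operation is right-distributive, so the error term $(u\circ s)^{-1}\star (u\star i)\circ s$ is a $\star$-conjugate of $\rho_s(i)$ and lies in $I$ by normality, with the assumed $\lambda$-condition and the reduction to monomials supplying the rest. The only difference is organizational: the paper gets the general case in a single expansion of $(u\star i)\circ_l v$ using the defining product formula of $\circ_l$ and works modulo $I$, whereas you peel off one letter of $v$ at a time by induction via left brace distributivity—these are the same computation, unrolled.
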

\begin{proof}
    We will prove the lemma only for $P(X)^l$. It is clear that if $I$ is an ideal of $P(X)^l$ then $\lambda_u(I)\subset I$ and $\rho_s(I)\subset I$ for all $u\in P(X)$ and $s\in M(X)$. Suppose the latter, it is left to show that $(u\star I)\circ_l v \subset u\circ_lv \star I$ for all $u,v\in P(X)$. Let $u=\prod_{i=1}^nu_i^{\epsilon_i}$ and $v=\prod_{i=1}^mv_i^{\delta_i}$ and $k\in I$. 
        \begin{align*}
            (u\star k)\circ_l v=& u\star k \star \prod_{i=1}^m \left((u\star k)^{-1}\star (u\star k)\circ v_i\right)^{\delta_j}\\
            =& u\star k \star \prod_{i=1}^m \left((u\star k)^{-1}\star u\circ v_i\star \rho_{v_i}(k)^{v_i}\right)^{\delta_j}\\
            \equiv & u\circ_l v \mod I.\qedhere
        \end{align*}
\end{proof}
\begin{corollary}
    If $I$ and $J$ are ideals of $P(X)^l$ (respectively $P(X)^r$), then $I\star J$ is an ideal of $P(X)^l$ (respectively $P(X)^r$).
\end{corollary}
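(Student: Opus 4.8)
The plan is to verify the two conditions of the ideal criterion in Lemma~\ref{lem:idealcrit} for the normal subgroup $I\star J$, rather than working directly from the definition of an ideal. The crucial observation is that the maps $\lambda_u$ (for $u\in P(X)$) and $\rho_s$ (for $s\in M(X)$) are group \emph{endomorphisms} of $P(X)_\star$ by Proposition~\ref{rem:actions}, so they interact transparently with the group-theoretic product $I\star J$.

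First I would recall that the product $I\star J$ of two normal subgroups of $P(X)_\star$ is again a normal subgroup: since $I$ is normal we have $I\star J=J\star I$, so $I\star J$ is a subgroup, and conjugating by any element $g$ gives $(I\star J)^g=I^g\star J^g=I\star J$. Thus $I\star J$ satisfies the standing hypothesis of Lemma~\ref{lem:idealcrit}, and it remains only to check the two stability conditions appearing there.

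Next, for the left wire $P(X)^l$, I would argue as follows. For any $u\in P(X)$, since $\lambda_u$ is a group endomorphism we have $\lambda_u(I\star J)=\lambda_u(I)\star\lambda_u(J)$; because $I$ and $J$ are ideals, $\lambda_u(I)\subset I$ and $\lambda_u(J)\subset J$, whence $\lambda_u(I\star J)\subset I\star J$. The identical argument, using that $\rho_s$ is a group endomorphism for each $s\in M(X)$, gives $\rho_s(I\star J)=\rho_s(I)\star\rho_s(J)\subset I\star J$. By Lemma~\ref{lem:idealcrit}, $I\star J$ is an ideal of $P(X)^l$. The case of $P(X)^r$ is entirely symmetric: one exchanges the roles of $\lambda$ and $\rho$ and invokes the second statement of Lemma~\ref{lem:idealcrit}, namely $\rho_u(I\star J)\subset I\star J$ for all $u\in P(X)$ and $\lambda_s(I\star J)\subset I\star J$ for all $s\in M(X)$.

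The proof is essentially routine once the right tool is identified; the only point requiring care is to invoke Lemma~\ref{lem:idealcrit} instead of the definition of an ideal directly. Verifying the raw condition $(u\star(I\star J))\circ_l v\subset (u\circ_l v)\star(I\star J)$ by hand would be considerably more cumbersome, whereas the endomorphism property of $\lambda_u$ and $\rho_s$ makes the product $I\star J$ immediate. So the main obstacle is really just recognizing that the heavy lifting has already been done by the lemma together with the fact that the canonical actions are by group endomorphisms.
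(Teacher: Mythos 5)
Your proof is correct and follows exactly the paper's intended route: the paper's own proof is just the one-line remark that the corollary ``is a direct consequence of Lemma~\ref{lem:idealcrit}'', and your argument---normality of $I\star J$ plus stability under the endomorphisms $\lambda_u$ and $\rho_s$, which distribute over the product of subgroups---is precisely the expansion of that remark. No discrepancies to report.
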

\begin{proof}
    It is a direct consequence of Lemma~\ref{lem:idealcrit}.
\end{proof}
\begin{definition}
    Let $P(X)^2$ (respectively $P(X)^2_{\mathrm{op}}$) be the normal subgroup of $P(X)$ generated by the elements $m^{-1}\star m\circ l\star l^{-1}$ (respectively, $m^{-1}\star l\circ m\star l^{-1}$) for all monomials $m,l\in M(X)$. 
\end{definition}
\begin{lemma}
    \label{lem:staroperation}
    Let $u,v\in P(X)$. In $P(X)/P(X)^2$ (respectively $P(X)/P(X)^2_{\mathrm{op}}$), we have $[u\circ_r v]=[u\circ_l v]=[u\star v]$ (respectively, $[u\circ_r v]=[u\circ_l v]=[v\star u]$).
\end{lemma}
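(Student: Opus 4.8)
The plan is to reduce everything to a single base relation that is immediate from the definition of $P(X)^2$ and then to propagate it through the explicit formulas for $\circ_l$ and $\circ_r$, using that the quotient map $q\colon P(X)\to P(X)/P(X)^2$ is a group homomorphism. Writing $[w]=q(w)$, the defining generators $m^{-1}\star m\circ l\star l^{-1}$ give at once
\[
[m\circ l]=[m]\star[l]=[m\star l]\qquad\text{for all monomials } m,l\in M(X),
\]
while in the op case the generators $m^{-1}\star l\circ m\star l^{-1}$ give $[m\circ l]=[l\star m]$ for all monomials. This is the only input coming from the definition of the subgroup; the rest is bookkeeping through the homomorphism $q$.

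First I would treat the mixed cases in which exactly one argument is a monomial, since these are the building blocks of both $\circ_l$ and $\circ_r$. For a monomial $m$ and a word $v=\prod_j v_j^{\delta_j}$ one has $m\circ v=m\star\prod_j\lambda_m(v_j)^{\delta_j}$ with $\lambda_m(v_j)=m^{-1}\star m\circ v_j$; since $m$ and each $v_j$ are monomials, the base relation gives $[\lambda_m(v_j)]=[v_j]$, hence $[m\circ v]=[m\star v]$. In the op case one finds instead $[\lambda_m(v_j)]=[m]^{-1}[v_j][m]$, and after the leading factor $[m]$ the conjugating factors telescope, yielding $[m\circ v]=[v\star m]$. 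The symmetric computation with $u\circ l=\left(\prod_i\rho_l(u_i)^{\epsilon_i}\right)\star l$ and $\rho_l(u_i)=u_i\circ l\star l^{-1}$ gives $[u\circ l]=[u\star l]$ (resp.\ $[l\star u]$) for a word $u$ and a monomial $l$.

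Next I would feed these into the general formulas of Definition~\ref{def:operations}. For $\circ_r$, writing $u\circ_r v=\left(\prod_i\rho_v(u_i)^{\epsilon_i}\right)\star v$ with $\rho_v(u_i)=u_i\circ v\star v^{-1}$, the previous step applied to the monomial $u_i$ and the word $v$ gives $[\rho_v(u_i)]=[u_i]$, so $[u\circ_r v]=[u]\star[v]=[u\star v]$; in the op case $[\rho_v(u_i)]=[v][u_i][v]^{-1}$, and again the conjugating factors telescope to give $[u\circ_r v]=[v\star u]$. The argument for $\circ_l$ is completely symmetric, using $\lambda_u(v_j)=u^{-1}\star u\circ v_j$ together with the word-times-monomial case. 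This establishes all three equalities simultaneously.

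I expect the only delicate point to be the non-abelian bookkeeping in the $P(X)/P(X)^2_{\mathrm{op}}$ case: unlike $P(X)/P(X)^2$, where the actions $\lambda$ and $\rho$ become trivial on classes modulo the subgroup, modulo $P(X)^2_{\mathrm{op}}$ they become honest conjugations, so one must verify that the conjugating factors cancel cleanly in each product. This cancellation is precisely what reflects the almost trivial skew brace structure carried by $P(X)/P(X)^2_{\mathrm{op}}$. Everything else is a routine application of the base relation and the homomorphism property of $q$, requiring no induction beyond the two-step reduction (monomial/word, then word/word).
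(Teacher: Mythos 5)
Your proof is correct and follows essentially the same route as the paper: establish the monomial--monomial relation directly from the generators of $P(X)^2$ (resp.\ $P(X)^2_{\mathrm{op}}$), deduce the mixed monomial/word cases, and then propagate through the defining formulas for $\circ_l$ and $\circ_r$, with the actions becoming trivial modulo $P(X)^2$ and conjugations (which telescope) modulo $P(X)^2_{\mathrm{op}}$. The paper's proof is just a terser version of this same two-step reduction.
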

\begin{proof}
Suppose first that $v\in M(X)$. Then, the statement follows from the fact that for all $l\in M(X)$, in $P(X)/P(X)^2$ we have $[\rho_v(l)]= [\lambda_v(l)]=[l]$ and in $P(X)/P(X)_{\mathrm{op}}^2$ we obtain $[\rho_v(l)]=[l^{v^{-1}}]$ and $[\lambda_v(u_i)]=[u_i^v]$. The latter implies that the identities are still true when $v$ is a general word, which concludes the proof. 
\end{proof}
\begin{proposition}
    The ideals $(P(X)^l)^2$ and $(P(X)^r)^2$ (respectively $(P(X)_{\mathrm{op}}^l)^2$ and $(P(X)_{\mathrm{op}}^r)^2$) are equal to $P(X)^2$ (respectively $P(X)^2_{\mathrm{op}}$).
\end{proposition}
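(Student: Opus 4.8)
The plan is to establish each of the four equalities by a double inclusion, with Lemma~\ref{lem:staroperation} supplying one direction and the behaviour of the operations on monomials supplying the other. I treat $(P(X)^l)^2=P(X)^2$ in detail; the remaining three cases are entirely parallel.

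For the inclusion $(P(X)^l)^2\subseteq P(X)^2$, recall that $(P(X)^l)^2$ is generated by the elements $u^{-1}\star u\circ_l v\star v^{-1}$ with $u,v\in P(X)$. By Lemma~\ref{lem:staroperation}, in the quotient $P(X)/P(X)^2$ one has $[u\circ_l v]=[u\star v]$, whence $[u^{-1}\star u\circ_l v\star v^{-1}]=[e]$. Thus every generator of $(P(X)^l)^2$ lies in $P(X)^2$, which gives the inclusion.

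For the reverse inclusion $P(X)^2\subseteq(P(X)^l)^2$, I would first note that on monomials the operation $\circ_l$ restricts to the semigroup operation of $M(X)$: for $m,l\in M(X)$ one has $m\circ_l l=m\star\lambda_m(l)=m\star m^{-1}\star m\circ l=m\circ l$. Hence each generator $m^{-1}\star m\circ l\star l^{-1}$ of $P(X)^2$ equals $m^{-1}\star m\circ_l l\star l^{-1}$, a generator of $(P(X)^l)^2$ (with $u=m$ and $v=l$). Since $(P(X)^l)^2$ is an ideal, hence a normal subgroup of $P(X)$, and $P(X)^2$ is by definition the normal closure of its generators, the inclusion follows.

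The case $(P(X)^r)^2=P(X)^2$ is identical after replacing $\circ_l$ by $\circ_r$, using that $\circ_r$ too restricts to the $M(X)$ operation on monomials and the congruence $[u\circ_r v]=[u\star v]$ in $P(X)/P(X)^2$ from Lemma~\ref{lem:staroperation}. For the two ``op'' statements one argues modulo $P(X)^2_{\mathrm{op}}$, where Lemma~\ref{lem:staroperation} gives $[v\circ_l u]=[v\circ_r u]=[u\star v]$; this makes each generator $u^{-1}\star v\circ_l u\star v^{-1}$ (respectively $u^{-1}\star v\circ_r u\star v^{-1}$) of $(P(X)^l_{\mathrm{op}})^2$ (respectively $(P(X)^r_{\mathrm{op}})^2$) trivial modulo $P(X)^2_{\mathrm{op}}$, while the monomial generators $m^{-1}\star l\circ m\star l^{-1}$ of $P(X)^2_{\mathrm{op}}$ are exactly generators of the corresponding op-ideal (with $u=m$, $v=l$). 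I expect the only delicate point to be the bookkeeping in these op cases---keeping track of which argument sits on which side of $\circ$ so that the correct generator of $P(X)^2_{\mathrm{op}}$ is produced---but no new idea is needed beyond Lemma~\ref{lem:staroperation} and the monomial identity.
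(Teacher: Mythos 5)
Your proof is correct, and it rests on the same key fact as the paper's, namely Lemma~\ref{lem:staroperation}, but it is organized along a slightly different route. The paper's one-line proof combines Lemma~\ref{lem:staroperation} with the ideal criterion of Lemma~\ref{lem:idealcrit} to first establish that $P(X)^2$ and $P(X)^2_{\mathrm{op}}$ are ideals of both $P(X)^l$ and $P(X)^r$, and then compresses the actual equality into ``which yields the claim,'' leaving the double inclusion implicit. You make that double inclusion explicit and, in doing so, bypass Lemma~\ref{lem:idealcrit} entirely: for the inclusion $(P(X)^l)^2\subseteq P(X)^2$ you only need that $P(X)^2$ is a normal subgroup (true by definition) together with Lemma~\ref{lem:staroperation}, and for the reverse inclusion you only need the monomial identity $m\circ_l l=m\circ l$ plus the normality of $(P(X)^l)^2$, which is supplied by the proposition in Section~\ref{sec:wires} showing that $W^2$ and $W^2_{\mathrm{op}}$ are ideals of any left (or right) wire---a fact the statement already presupposes by calling these subgroups ideals. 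The trade-off is this: the paper's route proves along the way the stronger intermediate fact that $P(X)^2$ and $P(X)^2_{\mathrm{op}}$ are ideals of $P(X)^l$ and $P(X)^r$, which it reuses later (e.g., in the proof of Proposition~\ref{pro:comideal}); your route does not establish this separately, although it follows a posteriori from the equalities themselves. Your handling of the ``op'' cases, including the swap $[v\circ_l u]=[u\star v]$ modulo $P(X)^2_{\mathrm{op}}$ and the identification of $m^{-1}\star l\circ m\star l^{-1}$ with the generator $m\cdot_{\mathrm{op}}l$, is also correct.
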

\begin{proof}
   By lemmas~\ref{lem:staroperation} and~\ref{lem:idealcrit}, it follows that $P(X)^2$ and $P(X)^2_{\mathrm{op}}$ are both ideals of $P(X)^l$ and $P(X)^r$. Which yields the claim.
\end{proof}
\begin{proposition}
    \label{pro:comideal}
    The normal subgroup $[P(X)^2,P(X)_{\mathrm{op}}^2]$ is an ideal of both $P(X)^l$ and $P(X)^r$.
\end{proposition}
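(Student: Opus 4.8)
The plan is to recognize that all the real work has been done by Lemma~\ref{lem:idealcrit} and the preceding proposition, so that only a short bookkeeping argument remains. Write $N := [P(X)^2, P(X)_{\mathrm{op}}^2]$. Since $P(X)^2$ and $P(X)_{\mathrm{op}}^2$ are both normal subgroups of $P(X)_\star$, their mutual commutator $N$ is again normal in $P(X)_\star$, which is the first hypothesis needed to invoke Lemma~\ref{lem:idealcrit}. It then remains only to verify the stability of $N$ under the appropriate structure maps $\lambda$ and $\rho$.

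The central observation I would exploit is that each $\lambda_u$ (arising from $P(X)^l$) and each $\rho_u$ (arising from $P(X)^r$) is a group \emph{endomorphism} of $P(X)_\star$ by Proposition~\ref{rem:actions}, and any group endomorphism $\phi$ automatically satisfies $\phi([a,b]) = [\phi(a),\phi(b)]$. Because $N$ is generated by the commutators $[a,b]$ with $a \in P(X)^2$ and $b \in P(X)_{\mathrm{op}}^2$, it therefore suffices to know that each such $\phi$ carries $P(X)^2$ into $P(X)^2$ and $P(X)_{\mathrm{op}}^2$ into $P(X)_{\mathrm{op}}^2$; granting this, $\phi([a,b]) = [\phi(a),\phi(b)]$ again lands in $N$, and hence $\phi(N) \subset N$. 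I would phrase the whole verification symmetrically so that it settles both wires at once.

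This required preservation is precisely what the preceding proposition supplies: since $P(X)^2$ and $P(X)_{\mathrm{op}}^2$ are ideals of \emph{both} $P(X)^l$ and $P(X)^r$, Lemma~\ref{lem:idealcrit} (applied to each of them, and combining the $P(X)^l$ and $P(X)^r$ conclusions) gives $\lambda_u(P(X)^2) \subset P(X)^2$ and $\rho_u(P(X)^2) \subset P(X)^2$ for all $u \in P(X)$, together with the analogous inclusions $\lambda_u(P(X)_{\mathrm{op}}^2) \subset P(X)_{\mathrm{op}}^2$ and $\rho_u(P(X)_{\mathrm{op}}^2) \subset P(X)_{\mathrm{op}}^2$. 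Feeding these into the commutator identity of the previous paragraph yields $\lambda_u(N) \subset N$ and $\rho_u(N) \subset N$ for every $u \in P(X)$, and in particular for every $s \in M(X)$. Plugging these inclusions back into Lemma~\ref{lem:idealcrit} then shows $N$ is simultaneously an ideal of $P(X)^l$ and of $P(X)^r$.

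I do not expect a genuine obstacle here, as the heavy lifting sits in Lemma~\ref{lem:idealcrit} and in the two-sided stability of $P(X)^2$ and $P(X)_{\mathrm{op}}^2$. The one point deserving care is the asymmetry of the criterion: the ideal condition for $P(X)^l$ only demands $\rho_s$-stability for $s \in M(X)$, while that for $P(X)^r$ only demands $\lambda_s$-stability for $s \in M(X)$. The full ``for all $u \in P(X)$'' strength furnished by the two-sidedness of $P(X)^2$ and $P(X)_{\mathrm{op}}^2$ comfortably covers both demands, so a single argument handles both wires without any further computation.
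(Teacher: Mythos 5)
Your proposal is correct and takes essentially the same approach as the paper: the paper's (one-sentence) proof likewise combines the fact that $P(X)^2$ and $P(X)_{\mathrm{op}}^2$ are ideals of both $P(X)^l$ and $P(X)^r$ with the observation that the stability conditions in Lemma~\ref{lem:idealcrit} involve group \emph{endomorphisms}, which preserve commutators and hence the subgroup $[P(X)^2,P(X)_{\mathrm{op}}^2]$. Your write-up simply makes explicit the bookkeeping (normality, generation by commutators, and which inclusions come from which ideal property) that the paper leaves to the reader.
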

\begin{proof}
    Just note that $P(X)^2$ and $P(X)_{\mathrm{op}}^2$ are both ideals of $P(X)^l$ and $P(X)^r$ and the conditions of the Criterion from Lemma~\ref{lem:idealcrit} are given by actions by endomorphism which yields the claim.
\end{proof}

Our next goal is to show that $\circ_l$ and $\circ_r$ coincide in the quotient 
\begin{equation}
\label{eq:freenotation}
    F_w(X)=P(X)/[P(X)^2,P(X)^2_{\mathrm{op}}].
\end{equation}

\begin{lemma}
    \label{lem:entanglement}
    There are actions by group automorphisms of the group \\
    $P(X)/P(X)^2$ on $(P(X)_{\mathrm{op}}^2/[P(X)^2,P(X)^2_{\mathrm{op}}],\star)$ and the group $(P(X)/P(X)_{\mathrm{op}}^2,\star)$ on $(P(X)^2/[P(X)^2,P(X)^2_{\mathrm{op}}],\star)$. These actions are given by the maps,
    \[P(X)/P(X)^2\to \Aut((P(X)_{\mathrm{op}}^2/[P(X)^2,P(X)^2_{\mathrm{op}}],\star));  [u]\mapsto([v]\mapsto [v^u])\]
    and 
    \[(P(X)/P(X)_{\mathrm{op}}^2,\star)\to \Aut((P(X)^2/[P(X)^2,P(X)^2_{\mathrm{op}}],\star));  [u]\mapsto([v]\mapsto [v^u]).\]  
\end{lemma}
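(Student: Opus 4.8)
The plan is to exhibit both maps as restrictions of the conjugation action of $P(X)$ on itself and then to check the two descents that make the formulas well defined. I will describe the first action in detail, writing $A=P(X)^2$, $B=P(X)^2_{\mathrm{op}}$ and $C=[P(X)^2,P(X)^2_{\mathrm{op}}]$; the second is obtained by interchanging the roles of $A$ and $B$.

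First I would record that $A$ and $B$ are normal subgroups of $P(X)$ by their very definition, hence so is their mutual commutator $C$. Consequently, for every $u\in P(X)$, conjugation $v\mapsto v^{u}=u^{-1}\star v\star u$ carries $B$ into $B$ and $C$ into $C$, so it descends to a map $c_{u}\colon B/C\to B/C$, $[v]\mapsto[v^{u}]$. Each $c_{u}$ is an automorphism of $(B/C,\star)$ with inverse $c_{u^{-1}}$, and since $v^{u\star w}=(v^{u})^{w}$ the assignment $u\mapsto c_{u}$ is an action of $P(X)$ by automorphisms on $B/C$.

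The heart of the argument is to show that this action is trivial on $A$, so that it factors through $P(X)/A$. For $a\in A$ and $v\in B$ a direct computation gives $v^{a}\star v^{-1}=a^{-1}\star v\star a\star v^{-1}=[a^{-1},v]$, which is a commutator of an element of $A$ with an element of $B$ and therefore lies in $C$. Hence $[v^{a}]=[v]$, i.e. $c_{a}=\id$. Combined with $v^{u\star a}=(v^{u})^{a}$, this is exactly the well-definedness of $[u]\mapsto c_{u}$ on the cosets of $A$, and it yields the desired action of $P(X)/P(X)^2$ by automorphisms on $B/C$ given by the stated formula.

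Finally, the second action is proved by the symmetric argument: conjugation by $P(X)$ preserves $A$ and $C$, and conjugation by $b\in B$ acts trivially on $A/C$ because $v^{b}\star v^{-1}=[b^{-1},v]\in[B,A]=C$ for $v\in A$; thus the action descends to $P(X)/P(X)^2_{\mathrm{op}}$. I do not anticipate a genuine difficulty here—the only point requiring care is the commutator bookkeeping, namely verifying that the relevant difference lands in the joint commutator $[P(X)^2,P(X)^2_{\mathrm{op}}]$ and not merely in one of the two factors, together with the (immediate) normality of $A$ and $B$ in $P(X)$.
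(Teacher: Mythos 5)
Your proof is correct and follows essentially the same route as the paper's: both hinge on the single computation that conjugating an element of one of the subgroups $P(X)^2$, $P(X)^2_{\mathrm{op}}$ by an element of the other changes it by a commutator lying in $[P(X)^2,P(X)^2_{\mathrm{op}}]$, which gives the descent of the conjugation action to the relevant quotients. Your phrasing (triviality of the action on $A$, hence factorization through $P(X)/A$) is just a repackaging of the paper's direct coset-independence check.
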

\begin{proof}
    Let $z\in P(X)^2$ and $u,v\in P(X)$, such that $u\star v^{-1}\in P(X)^2_{\mathrm{op}}$, then $z^{u\star v^{-1}}z^{-1}\in [P(X)^2,P(X)^2_{\mathrm{op}}]$ so that $[z^u]\equiv [z^v]\mod [P(X)^2,P(X)^2_{\mathrm{op}}]$. Similarly, one sees that the second morphism is well-defined.
\end{proof}
\begin{remark}
    A consequence of Lemma~\ref{lem:entanglement} is that for all $u,v\in P(X)$, $z_1\in P(X)^2$ and $z_2\in P(X)_{\mathrm{op}}^2$ the identities $[z_1^{u\circ_l v}]=[z_1^{u\circ_r v}]=[z_1^{v\star u}]$ and $[z_2^{u\circ_l v}]=[z_2^{u\circ_r v}]=[z_2^{u\star v}]$ hold in $P(X)/[P(X)^2,P(X)_{\mathrm{op}}^2]$.
\end{remark}
\begin{proposition}
    \label{pro:coincide}
    Let $u,v\in P(X)$, then using the notation of Equation~\eqref{eq:freenotation}, $u\circ_r v$ and $u\circ_l v$ have the same image under the canonical homomorphism $P(X)\to F_w(X)$.
\end{proposition}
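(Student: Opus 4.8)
The plan is to prove the equality $[u\circ_r v]=[u\circ_l v]$ in $F_w(X)$ by induction on the pair of word lengths $(|u|,|v|)$, reducing the general case to the transposition of two commuting blocks, exactly as in the length‑two computation carried out at the start of this section. The base case is when one argument is a single monomial: if $u\in M(X)$ or $v\in M(X)$, then in fact $u\circ_r v=u\circ_l v$ already in $P(X)$, directly from the definition of the two operations when one factor is a monomial. For the inductive step I would write $u=s\star u'$ and $v=v'\star t$, with $s$ the first letter of $u$ and $t$ the last letter of $v$, and expand both products using the distributivity laws at our disposal: right brace distributivity for $\circ_r$ peels $s$ off the left of $u$, while left brace distributivity for $\circ_l$ peels $t$ off the right of $v$.

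Carrying out both expansions and applying the induction hypothesis to the three strictly shorter products obtained from $u'$ and $v'$, one brings $u\circ_r v$ and $u\circ_l v$ to two words that agree letter by letter except for the order of two consecutive blocks. One block is $B_1=s\cdot t=s^{-1}\star s\circ t\star t^{-1}\in P(X)^2$; the other is $B_2=(v')^{-1}\star(u'\circ_r v')\star(u')^{-1}$, which by Lemma~\ref{lem:staroperation} is congruent to $(v')^{-1}\star v'\star u'\star(u')^{-1}=e$ modulo $P(X)^2_{\mathrm{op}}$, hence lies in $P(X)^2_{\mathrm{op}}$. In the expansion of $u\circ_r v$ these blocks appear in the order $B_1B_2$, and in that of $u\circ_l v$ in the order $B_2B_1$; since $[P(X)^2,P(X)^2_{\mathrm{op}}]$ is precisely the subgroup by which we quotiented, we have $[B_1,B_2]=e$ in $F_w(X)$, so the two words coincide. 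This is the manifestation, at the level of the free object, of the commutator computation of Proposition~\ref{pro:commutator trivial}.

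The hard part will be the sign bookkeeping. Expanding $s\circ_r v$ over $v=v'\star t$ uses that on a monomial left factor $\circ_r$ agrees with the left‑distributive $\circ_l$, which is valid only when the boundary letter $s$ is a genuine monomial rather than the inverse of one; the same caveat applies to $t$. A reduced word may begin and end with inverse letters (for instance $u=m^{-1}$), so these cases cannot be dodged by choosing which end to peel. I would handle them by first re‑expressing the offending inverse‑letter products via the identity $m\star m^{-1}=e$ together with the global distributivity of $\circ_r$ (resp.\ $\circ_l$): for example $m^{-1}\circ_r v=v\star(m\circ v)^{-1}\star v$, read off from $e\circ_r v=(m\circ_r v)\star v^{-1}\star(m^{-1}\circ_r v)$. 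Substituting such expressions returns the comparison to the shape above, and the conjugation‑compatibility of Lemma~\ref{lem:entanglement}---which allows one to replace conjugation by $u\circ_l v$ or $u\circ_r v$ with conjugation by $v\star u$ or $u\star v$ inside $F_w(X)$---is what guarantees that the conjugating factors generated along the way can be normalized consistently, so that the residual discrepancy is again a single commutator drawn from $[P(X)^2,P(X)^2_{\mathrm{op}}]$. Checking that every sign combination collapses in this manner is routine, but it is the only genuinely delicate point of the argument.
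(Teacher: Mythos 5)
Your argument is correct, but it takes a genuinely different route from the paper's. You run an induction on the pair $(|u|,|v|)$ (the three appeals to the induction hypothesis, at $(|u|,|v'|)$, $(|u'|,|v|)$ and $(|u'|,|v'|)$, are legitimate under the componentwise well-founded order), peel the first letter $s$ of $u$ via right distributivity of $\circ_r$ and the last letter $t$ of $v$ via left distributivity of $\circ_l$, and reduce the comparison to swapping two adjacent blocks. I verified this: in the all-positive case one gets exactly $u\circ_r v\equiv A\star B_1\star B_2\star C$ and $u\circ_l v\equiv A\star B_2\star B_1\star C$ modulo $[P(X)^2,P(X)^2_{\mathrm{op}}]$, with $A=s\circ v'$, $C=u'\circ t$, $B_1=s\cdot t\in P(X)^2$ and $B_2=(v')^{-1}\star (u'\circ_r v')\star (u')^{-1}\in P(X)^2_{\mathrm{op}}$, just as you claim; and your sign cases do close by the mechanism you name---for instance for $v=v'\star t^{-1}$ the residual discrepancy is between the conjugates $\left((s\cdot t)^{-1}\right)^{t}$ and $\left((s\cdot t)^{-1}\right)^{(u'\circ t)\star (u'\circ_r v')^{-1}\star v'}$, and Lemma~\ref{lem:staroperation} shows the two conjugators agree modulo $P(X)^2_{\mathrm{op}}$, so Lemma~\ref{lem:entanglement} finishes. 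The paper proceeds differently: it inducts on $|v|$ alone, keeps $u$ intact, and proves a truncation lemma (Lemma~\ref{lem:recursion}) asserting that the single word $u\circ_r v\star u\circ_l v^{-1}$, which measures the failure of the two operations to agree, is unchanged modulo $[P(X)^2,P(X)^2_{\mathrm{op}}]$ when the last positive letter of $v$ is deleted; a negative last letter is then absorbed by applying that lemma to the unreduced representative $v\star v_{m+1}$, with no separate conjugation analysis. Your route buys transparency---it is literally the length-two computation motivating Proposition~\ref{pro:commutator trivial}, iterated---but pays with a four-way sign case analysis that you defer as ``routine''; that bookkeeping is genuinely the bulk of the work (comparable in length to the paper's proof of Lemma~\ref{lem:recursion}), though it does go through with exactly the tools you cite. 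The paper's route is less intuitive but more economical: one lemma processes all letters of $u$ simultaneously, and the unreduced-word trick makes the sign issue essentially disappear.
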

For the proof of Proposition~\ref{pro:coincide} we will proceed with an inductive argument for which we will need the following Lemma.
\begin{lemma}
\label{lem:recursion}
    Let $u,v\in P(X)$ such that $v=\prod_{i=1}^{m}v_i^{\delta_i}$ and $\delta_m=1$. Let $v_s=\prod_{i=1}^{m-1}v_i^{\delta_i}$. Then, $u\circ_rv\star u\circ_l v^{-1}\equiv u\circ_rv_s\star u\circ_lv_s^{-1}\mod [P(X)^2,P(X)^2_{\mathrm{op}}]$. 
\end{lemma}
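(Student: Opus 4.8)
The plan is to prove the recursion by reducing everything to the case handled by the definitions and by exploiting the endomorphism properties of $\lambda_u$ and $\rho_v$ together with the triviality of $[P(X)^2,P(X)^2_{\mathrm{op}}]$ in the quotient. Write $v=v_s\star v_m$ with $v_m\in M(X)$ (using $\delta_m=1$). The left brace distributivity gives $u\circ_l v=u\circ_l v_s\star u^{-1}\star u\circ_l v_m$, hence $u\circ_l v^{-1}=(u\circ_l v_m)^{-1}\star u\star (u\circ_l v_s)^{-1}$ after using that $(W,\star,\circ_l)$ is a left wire (Proposition~\ref{pro:associative}) and that $\lambda_u$ is a group endomorphism (Proposition~\ref{rem:actions}). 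Similarly, right brace distributivity gives $u\circ_r v=\rho_v(\text{head of }u)\star v$ in a form that lets me peel off the last letter $v_m$ of $v$; the key computational identity to establish is how $\rho_{v_s\star v_m}$ relates to $\rho_{v_s}$ and $\rho_{v_m}$, since $\circ_r$ distributes over the \emph{first} argument, not the second.

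The core of the argument is therefore to compute $u\circ_r v\star u\circ_l v^{-1}$ and $u\circ_r v_s\star u\circ_l v_s^{-1}$ in $P(X)$, subtract them, and show the difference lands in $[P(X)^2,P(X)^2_{\mathrm{op}}]$. First I would expand $u\circ_r v$ by writing $u=\prod_{i=1}^n u_i^{\epsilon_i}$ and using $\rho_v(u_i)=u_i\circ v\star v^{-1}$, and separately expand $u\circ_r v_s$ with $\rho_{v_s}(u_i)=u_i\circ v_s\star v_s^{-1}$. The elements $u_i\circ v$ and $u_i\circ v_s$ differ by a factor involving $u_i\circ v_m$, and the resulting correction terms are products of elements of the form $m^{-1}\star m\circ l\star l^{-1}$ (generators of $P(X)^2$) conjugated by elements of $P(X)$; by Lemma~\ref{lem:entanglement} and its following remark, conjugation by $u\circ_l v$, $u\circ_r v$, and $v\star u$ all agree modulo $[P(X)^2,P(X)^2_{\mathrm{op}}]$ on elements of $P(X)^2$, and dually on $P(X)^2_{\mathrm{op}}$. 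This is precisely the tool that lets me commute the $P(X)^2$-type correction terms past the $P(X)^2_{\mathrm{op}}$-type ones at the cost of an element of $[P(X)^2,P(X)^2_{\mathrm{op}}]$, which vanishes in the quotient.

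I expect the main obstacle to be bookkeeping: keeping track of which correction terms are generators of $P(X)^2$ versus $P(X)^2_{\mathrm{op}}$, and verifying that every commutator that arises when reordering them is genuinely of the mixed form $[P(X)^2,P(X)^2_{\mathrm{op}}]$ rather than $[P(X)^2,P(X)^2]$ or $[P(X)^2_{\mathrm{op}},P(X)^2_{\mathrm{op}}]$ (the latter two are \emph{not} assumed trivial). The cleanest way to manage this is to work modulo $[P(X)^2,P(X)^2_{\mathrm{op}}]$ throughout and invoke the remark after Lemma~\ref{lem:entanglement} each time a conjugating word needs to be replaced by a simpler representative. Once the two expressions are reduced to the same normal form modulo $[P(X)^2,P(X)^2_{\mathrm{op}}]$, the stated congruence follows. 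I would set up the computation so that the telescoping between the length-$m$ and length-$(m-1)$ cases is transparent, isolating the single extra letter $v_m$ and showing its contribution to $u\circ_r v\star u\circ_l v^{-1}$ cancels modulo the commutator ideal.
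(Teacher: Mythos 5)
Your proposal is correct and can be carried through, but it organizes the computation in a way that is dual to the paper's, so it is worth comparing. The paper keeps the word $u\circ_r v\star u\circ_l v^{-1}$ intact, pairs each monomial $u_l\circ v_m^{\epsilon_l}$ coming from $u\circ_r v$ with its opposite coming from $(u\circ_l v)^{-1}$, checks explicitly that the subword lying between such a pair belongs to $P(X)^2_{\mathrm{op}}$, and then invokes the \emph{first} action of Lemma~\ref{lem:entanglement} (conjugation of $P(X)^2_{\mathrm{op}}/[P(X)^2,P(X)^2_{\mathrm{op}}]$ by $P(X)/P(X)^2$) to replace the conjugator $u_l\circ v_m$ by $u_l\star v_m$, after which the $v_m$-contributions cancel. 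Your route instead factors the discrepancy out globally: from $\rho_v(u_i)=\rho_{v_s}(u_i)\star v_s\star(u_i^{-1}\star u_i\circ v_m\star v_m^{-1})\star v_s^{-1}$ one gets $u\circ_r v=u\circ_r v_s\star Z\star v_m$ with $Z\in P(X)^2$ a product of conjugates of the generators $u_i\cdot v_m$, while $(u\circ_l v)^{-1}=(u\circ v_m)^{-1}\star u\star(u\circ_l v_s)^{-1}$ and $v_m\star(u\circ v_m)^{-1}\star u=Y^{-1}$ with $Y=u^{-1}\star u\circ v_m\star v_m^{-1}\in P(X)^2$, again a product of conjugates of the \emph{same} generators in the same order and with the same signs. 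The lemma then reduces to $Z\equiv Y \bmod [P(X)^2,P(X)^2_{\mathrm{op}}]$, which holds because the respective conjugators (words in the $\rho_{v_s}(u_j)^{\epsilon_j}$ and $v_s$, versus the corresponding words in the $u_j^{\epsilon_j}$) are congruent modulo $P(X)^2_{\mathrm{op}}$ by Lemma~\ref{lem:staroperation}, so the \emph{second} action of Lemma~\ref{lem:entanglement} identifies them. What this buys: instead of locating in-between subwords inside a long word and verifying membership in $P(X)^2_{\mathrm{op}}$ case by case ($\epsilon_l=\pm1$), you compare two cleanly factored elements of $P(X)^2$ by matching conjugators; in particular the bookkeeping danger you flag never materializes, since both discrepancies stay inside $P(X)^2$ and no commutators of type $[P(X)^2,P(X)^2]$ or $[P(X)^2_{\mathrm{op}},P(X)^2_{\mathrm{op}}]$ ever appear.

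Two points of precision. First, moving a $P(X)^2$-element past a $P(X)^2_{\mathrm{op}}$-element costs nothing beyond the definition of the quotient by $[P(X)^2,P(X)^2_{\mathrm{op}}]$; Lemma~\ref{lem:entanglement} is not about that. Its actual role—which you do also identify correctly—is the conjugator replacement described above, and that is where it is genuinely needed. Second, your identity $u\circ_l v^{-1}=(u\circ_l v_m)^{-1}\star u\star(u\circ_l v_s)^{-1}$ is the inverse of $u\circ_l v=u\circ_l v_s\star u^{-1}\star u\circ_l v_m$, so it is correct exactly under the reading $u\circ_l v^{-1}=(u\circ_l v)^{-1}$ rather than $u\circ_l(v^{-1})$; this is indeed the parsing the paper intends (it is what makes the recursion feed the induction in Proposition~\ref{pro:coincide}), but you should state it explicitly, since with the other parsing the claimed congruence is a different (and false) statement.
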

\begin{proof}
    Notice that in the word $u\circ_r v\star u\circ_l v^{-1}$ each monomial $u_i\circ v_j^{\epsilon_i\delta_j}$ coming from $u\circ_rv$ has a corresponding opposite monomial $u_i\circ v_j^{-\epsilon_i\delta_j}$ coming from $u\circ_lv^{-1}$. We will see that when $j=m$, they conjugate an element of $P(X)^2_{\mathrm{op}}$. This way, we will be able to change the $\circ$ operation into a $\star$ by using Lemma~\ref{lem:entanglement} and conclude. Indeed, assuming the latter we obtain $\mod [P(X)^2,P(X)^2_{\mathrm{op}}]$,
    \begin{align*}
        u\circ_rv\star u\circ_l v^{-1}&\equiv \left(\prod_{i=1}^n\left(u_i\star\left(\prod_{j=1}^{m-1}\lambda_{u_i}(v_j)^{\delta_j}\right)\star u_i^{-1}\star u_i\star v_m\star v^{-1}\right)^{\epsilon_i}\right)\star v \\
        &\star( u^{-1}\star u\star v_m)^{-1}\star \left(u\star\prod_{j=1}^{m-1}\lambda_u(v_j)^{\delta_j}\right)^{-1}\\
        &\equiv u\circ_r v_s \star u\circ_l v_s^{-1} \mod [P(X)^2,P(X)^2_{\mathrm{op}}].
    \end{align*} 
    One can see that when $\epsilon_l=1$ the word which is conjugated by $u_l\circ v_m^{\epsilon_l}$ is
    \[
    v^{-1}\star\left(\prod_{i=l+1}^n\rho_v(u_i)^{\epsilon_i}\right)\star v\star v_m^{-1}\star \left(\prod_{i=l+1}^n\rho_{v_m}(u_i)^{\epsilon_i}\right)^{-1}\star v_m 
    \]
     and when $\epsilon_l=-1$,
    \[
    \left(\prod_{j=1}^{m-1}(u_i\circ v_j\star u_i^{-1})^{\delta_j}\right)^{-1}\star\left(\prod_{i=l+1}^n\rho_v(u_i)^{\epsilon_i}\right)\star v\star v_m^{-1}\star\left(\prod_{i=l+1}^n\rho_{v_m}(u_i)^{\epsilon_i}\right)^{-1}.
    \]
    In both cases the expression can be seen to be trivial in $P(X)/P(X)^2_{\mathrm{op}}$.
\end{proof}
\begin{proof}[Proof of Proposition~\ref{pro:coincide}]
    Let $u,v\in P(X)$, to show the congruence 
    \[u\circ_l v\equiv u\circ_r v\mod [P(X)^2,P(X)^2_{\mathrm{op}}],\] we will proceed by induction on the size of $v$.
    The statement is clear when $v$ is trivial.

    Suppose the proposition is true when $v$ has length less than or equal to $m$. Suppose $v= \prod_{i=1}^{m+1}v_j^{\delta_j}$, if $\delta_{m+1}=1$ we can apply Lemma~\ref{lem:recursion} and use the induction hypothesis. Otherwise we have by induction hypothesis, $0\equiv u\circ_l(v\star v_{m+1})\star u\circ_r(v\star v_{m+1})^{-1}$. Then we can conclude using Lemma~\ref{lem:recursion}.
\end{proof}

Following Propositions~\ref{pro:coincide} and~\ref{pro:comideal}, we can give $F_w(X)$ a wire structure with $[u]\circ [v] = [u\circ_l v]=[u\circ_r v]$.
\begin{proposition}
    $F_w(X)$ is the free wire generated by the set $X$. Explicitly, for each wire $W$ and map $f\colon X\to W$, there exists a unique wire morphism $\hat{f}\colon F_w(X)\to W$ such that $\hat{f}\iota= f$ where $\iota\colon X\to F_w(X)$ is the canonical injection.
\end{proposition}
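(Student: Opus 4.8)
The plan is to establish the universal property by constructing the extension $\hat{f}$ explicitly on representatives in $P(X)$ and then checking that it descends to the quotient $F_w(X)$. First I would recall that $M(X)$ is the free semigroup on $X$, so any map $f\colon X\to W$ extends uniquely to a semigroup morphism $g\colon (M(X),\circ)\to (W,\circ)$ sending a monomial $x_{i_1}\circ\cdots\circ x_{i_k}$ to $f(x_{i_1})\circ\cdots\circ f(x_{i_k})$. Since $(P(X),\star)$ is by construction the free group on the set $M(X)$, the map $g$ extends uniquely to a group homomorphism $G\colon (P(X),\star)\to (W,\star)$. By construction $G$ already respects $\star$, and it agrees with $f$ on $X$; the content of the proof is that $G$ also respects the circle operations $\circ_l$ and $\circ_r$, and that it kills the ideal $[P(X)^2,P(X)^2_{\mathrm{op}}]$ so that it factors through $F_w(X)$.

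The key computational step is to verify that $G(u\circ_l v)=G(u)\circ G(v)$ for all $u,v\in P(X)$. Because $\circ_l$ was defined precisely so that left brace distributivity holds in $P(X)^l$ and the analogous distributivity holds in the target wire $W$, this should follow by a two-stage induction mirroring the associativity proof of Proposition~\ref{pro:associative}: first reduce to the case $v\in M(X)$ using left brace distributivity on both sides (writing $v=\prod_i v_i^{\delta_i}$ and expanding $u\circ_l v=u\star\prod_i\lambda_u(v_i)^{\delta_i}$), then reduce to the case $u\in M(X)$ similarly, at which point both $u$ and $v$ are monomials and $G(u\circ v)=G(u)\circ G(v)$ holds because $g$ is a semigroup morphism on $M(X)$. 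The symmetric argument handles $\circ_r$. It then follows that $G$ maps the generators $m^{-1}\star m\circ l\star l^{-1}$ of $P(X)^2$ into $W^2$ and the generators of $P(X)^2_{\mathrm{op}}$ into $W^2_{\mathrm{op}}$, whence $G$ sends $[P(X)^2,P(X)^2_{\mathrm{op}}]$ into $[W^2,W^2_{\mathrm{op}}]$, which is trivial by Proposition~\ref{pro:commutator trivial}. Therefore $[P(X)^2,P(X)^2_{\mathrm{op}}]\subset\ker(G)$ and $G$ factors through a well-defined map $\hat{f}\colon F_w(X)\to W$.

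Once $\hat{f}$ is shown to exist, I would verify it is a wire morphism: it respects $\star$ because $G$ does and the quotient map is a group homomorphism, and it respects $\circ$ because by Proposition~\ref{pro:coincide} the operation on $F_w(X)$ is $[u]\circ[v]=[u\circ_l v]=[u\circ_r v]$, so $\hat{f}([u]\circ[v])=G(u\circ_l v)=G(u)\circ G(v)=\hat{f}([u])\circ\hat{f}([v])$. Uniqueness is immediate: any wire morphism extending $f$ must agree with $g$ on monomials in $M(X)$ (since these are $\circ$-products of generators) and hence with $G$ on all of $P(X)$ (since every element is a $\star$-product of monomials and their inverses), so it coincides with $\hat{f}$ on $F_w(X)$.

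I expect the main obstacle to be the double induction showing $G(u\circ_l v)=G(u)\circ G(v)$: one must be careful that the expansion of $u\circ_l v$ via the actions $\lambda_u$ is compatible with the corresponding expansion in $W$, and that the reduction to monomials genuinely terminates with the semigroup-morphism base case. The delicate point is that the intermediate quantities $\lambda_u(v_i)=u^{-1}\star u\circ v_i$ in $P(X)$ must be matched termwise with $\lambda_{G(u)}(G(v_i))=G(u)^{-1}\star G(u)\circ G(v_i)$ in $W$, which is exactly where one invokes that $G$ already respects $\star$ together with the inductive hypothesis that it respects $\circ$ against a monomial. Everything else is formal bookkeeping with the universal properties of the free semigroup and the free group.
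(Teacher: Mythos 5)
Your proposal is correct and takes essentially the same approach as the paper: extend $f$ through the free semigroup $M(X)$ and then the free group $P(X)$, check that the resulting group morphism respects $\circ_l$ and $\circ_r$, use Proposition~\ref{pro:commutator trivial} to see it kills $[P(X)^2,P(X)^2_{\mathrm{op}}]$ and hence factors through $F_w(X)$, and deduce uniqueness from the uniqueness at each stage. The only difference is one of detail: the paper compresses your two-stage induction (the verification that $G$ respects the circle operations) into the remark that this holds ``by definition'' of $\circ_l$ and $\circ_r$, whereas you correctly identify it as the step needing the compatibility of $\lambda_u$, $\rho_v$ in $P(X)$ with the corresponding endomorphisms in $W$.
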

\begin{proof}
    Let $W$ be a wire and $f\colon X\to W$ a map. Then there exists a unique semigroup morphism $f_s\colon M(X)\to W_\circ$ such that the following diagram is commutative.
    \[\begin{tikzcd}
	{M(X)} & W \\
	X
	\arrow[dashed,"{f_s}", from=1-1, to=1-2]
	\arrow["\iota", from=2-1, to=1-1]
	\arrow["f"', from=2-1, to=1-2]
\end{tikzcd}\]
Then, there exists a unique group morphism $f_g\colon P(X)\to W_\star$ such that the diagram
\[\begin{tikzcd}
	{P(X)} && W \\
	{M(X)} 
	\arrow[dashed,"{f_g}", from=1-1, to=1-3]
	\arrow[from=2-1, to=1-1]
	\arrow["{f_s}", curve={height=6pt}, from=2-1, to=1-3]
\end{tikzcd}\]
commutes.

By definition, $f_g$ is a wire morphism for both $\circ_l$ and $\circ_r$.
By Proposition~\ref{pro:commutator trivial}, $[W^2,W_{\mathrm{op}}^2]$ is trivial. As $f_g([P(X)^2,P(X)_{\mathrm{op}}^2])\subset [W^2,W_{\mathrm{op}}^2]$, the map $f_g$ extends uniquely to a wire morphism $\hat{f}\colon F_w(X)\to W$ such that the diagram
\[\begin{tikzcd}
	{F_w(X)} && W \\
	P(X)
	\arrow[dashed,"{\hat{f}}", from=1-1, to=1-3]
	\arrow[from=2-1, to=1-1]
	\arrow["{f_g}"', curve={height=6pt}, from=2-1, to=1-3]
\end{tikzcd}\]
commutes.
 The uniqueness of $\hat{f}$ follows from the uniqueness at each step.
\end{proof}
\begin{corollary}
    The quotient $P(X)/P(X)^2$ is a trivial skew brace, and it is the free group generated by $X$. The quotient $P(X)/P(X)^2_{\mathrm{op}}$ is the free almost trivial skew brace generated by $X$.
\end{corollary}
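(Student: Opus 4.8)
The plan is to dispatch the structural claims first and then concentrate on freeness, which is where the real content lies. That $P(X)/P(X)^2$ is a trivial skew brace and $P(X)/P(X)^2_{\mathrm{op}}$ an almost trivial one is immediate: I would apply the earlier proposition asserting that $W/W^2$ and $W/W^2_{\mathrm{op}}$ are respectively trivial and almost trivial skew braces to the left wire $W=P(X)^l$, and invoke the identification $(P(X)^l)^2=P(X)^2$ and $(P(X)^l_{\mathrm{op}})^2=P(X)^2_{\mathrm{op}}$. This settles both structures at once, so what remains is to prove the two universal properties.

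The key preliminary observation is that morphisms in each target variety collapse to ordinary group homomorphisms of the underlying $\star$-groups. For a trivial skew brace $(G,\cdot,\cdot)$ this is clear, since $\circ=\star$. For an almost trivial skew brace $(G,\cdot,\cdot^{\mathrm{op}})$, any group homomorphism $h$ automatically satisfies $h(a\cdot^{\mathrm{op}} b)=h(b\cdot a)=h(b)\cdot h(a)=h(a)\cdot^{\mathrm{op}} h(b)$, so it is already a skew brace morphism. Consequently both assertions reduce to showing that the underlying $\star$-group is free on $X$, which I would establish by verifying the universal property of the free group directly.

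For $P(X)/P(X)^2$: given a group $G$ and a map $f\colon X\to G$, I would first extend $f$ along the free-semigroup structure of $M(X)$ to a semigroup morphism $M(X)\to(G,\cdot)$, then along the free-group structure of $P(X)$ (free on the generating set $M(X)$) to a group morphism $f_g\colon P(X)\to G$. The generators $m^{-1}\star(m\circ l)\star l^{-1}$ of $P(X)^2$ lie in $\ker f_g$ because $f_g(m\circ l)=f_g(m)f_g(l)$ by construction, so $f_g$ descends to $\bar f\colon P(X)/P(X)^2\to G$ extending $f$. Uniqueness follows from Lemma~\ref{lem:staroperation}: since $[m]=[x_1]\star\cdots\star[x_n]$ for every monomial $m=x_1\cdots x_n$, the classes $\{[x]:x\in X\}$ generate $P(X)/P(X)^2$, and any two extensions of $f$ agreeing on these generators coincide. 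This is precisely the universal property of the free group, giving $P(X)/P(X)^2\cong F(X)$.

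The almost trivial case runs along the same lines, and its one twist is what I expect to be the only genuine subtlety. To annihilate the generators $m^{-1}\star(l\circ m)\star l^{-1}$ of $P(X)^2_{\mathrm{op}}$ one now needs $f_g(l\circ m)=f_g(m)f_g(l)$, i.e. $f_g$ must reverse order on monomials; accordingly I would extend $f$ to a semigroup morphism into $(G,\cdot^{\mathrm{op}})$, sending $x_1\cdots x_n$ to $f(x_n)\cdots f(x_1)$, before passing to $P(X)$. The descended map $\bar f$ is then a $\star$-group morphism, hence by the observation above a morphism of almost trivial skew braces, and the identity $[u\circ v]=[v\star u]$ of Lemma~\ref{lem:staroperation} confirms its compatibility with $\circ$ explicitly. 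Uniqueness again follows since the $[x]$ generate the group. The main point to get right is this reversal of order---keeping careful track of whether one lands in $(G,\cdot)$ or $(G,\cdot^{\mathrm{op}})$---but once the correct semigroup morphism is selected the remaining verifications are routine.
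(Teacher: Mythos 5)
Your proof is correct, but it takes a genuinely different route from the paper's. The paper treats this statement as a true corollary of the machinery just built: it realizes both quotients as quotients of the free wire $F_w(X)$ by the ideals $I=P(X)^2/[P(X)^2,P(X)^2_{\mathrm{op}}]$ and $I_{\mathrm{op}}=P(X)^2_{\mathrm{op}}/[P(X)^2,P(X)^2_{\mathrm{op}}]$ (using the third isomorphism theorem, since $[P(X)^2,P(X)^2_{\mathrm{op}}]$ is contained in both $P(X)^2$ and $P(X)^2_{\mathrm{op}}$), and then deduces freeness in the subvarieties of trivial and almost trivial skew braces from the freeness of $F_w(X)$ together with the universal property of quotients. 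You bypass $F_w(X)$ entirely: you reduce skew brace morphisms between (almost) trivial skew braces to ordinary group homomorphisms of the underlying $\star$-groups, then verify the universal property by hand, extending a set map $X\to G$ first to the free semigroup $M(X)$ (into $(G,\cdot)$ in the trivial case, into $(G,\cdot^{\mathrm{op}})$ in the almost trivial case---the order reversal you rightly flag as the one delicate point), then to the free group $P(X)$, checking that the normal generators of $P(X)^2$, respectively $P(X)^2_{\mathrm{op}}$, are killed, with uniqueness coming from the fact that the classes $[x]$, $x\in X$, generate each quotient $\star$-group by Lemma~\ref{lem:staroperation}. In effect you re-run, in this simpler setting, the same two-step extension the paper used to prove $F_w(X)$ free. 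The paper's argument buys brevity, given that the ideal formalism, Proposition~\ref{pro:coincide}, and the freeness of $F_w(X)$ are already in place; yours buys self-containedness---it needs neither $F_w(X)$ nor the commutator ideal $[P(X)^2,P(X)^2_{\mathrm{op}}]$---and it isolates the useful observation that the two target subvarieties are, morphism for morphism, just the category of groups. Both proofs are complete; your handling of the structural claims via the proposition on $W/W^2$ and $W/W^2_{\mathrm{op}}$ and the identification $(P(X)^l)^2=P(X)^2$, $(P(X)^l)^2_{\mathrm{op}}=P(X)^2_{\mathrm{op}}$ matches what the paper has established.
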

\begin{proof}
    It is enough to see that the groups $I=P(X)^2/[P(X)^2,P(X)^2_{\mathrm{op}}]$ and $I_{\mathrm{op}}=P(X)^2_{\mathrm{op}}/[P(X)^2,P(X)^2_{\mathrm{op}}]$ are ideals of $F_w(X)$ and that $P(X)/P(X)^2\cong F_w(X)/I$ and $P(X)/P(X)^2_{\mathrm{op}}\cong F_w(X)/I_{\mathrm{op}}$. Then the result follows from the universal property of the quotient.\qedhere

\end{proof}

\section{Free commutative skew braces}
\label{sec:freeskb}
\label{sect:freeabwire}
    To construct the free commutative skew brace on a set $X$, we first construct the free commutative wire on $X$.
    A simple way of constructing the latter is as a quotient of $F_w(X)$, simply asking the generator to commute with respect to the operation $\circ$.
    \begin{definition}
        We define the circle commutator $[P(X),P(X)]_\circ$ of $P(X)$ to be the normal subgroup of $(P(X),\star)$ generated by the elements \[(x_1\circ \dots \circ x_n)^{-1}\star x_{\sigma(1)}\circ\dots \circ x_{\sigma(n)}\]
        with $x_1,\dots, x_n\in X$ and $\sigma \in \Sym(n)$. Furthermore, we define the circle commutator of $F_w(X)$ to be 
        \[[F_w(X),F_w(X)]_\circ=\left([P(X),P(X)]_\circ\star [P(X)^2,P(X)_{\mathrm{op}}^2]\right)/ [P(X)^2,P(X)_{\mathrm{op}}^2].
        \]
    \end{definition}
    \begin{proposition}
        The normal subgroup $[P(X),P(X)]_\circ$ is an ideal of $P(X)^l$ and $P(X)^r$. In particular, $[F_w(X),F_w(X)]_\circ$ is an ideal of $F_w(X)$.
    \end{proposition}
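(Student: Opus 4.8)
The plan is to invoke the ideal criterion of Lemma~\ref{lem:idealcrit}. Write $I=[P(X),P(X)]_\circ$, which is normal in $(P(X),\star)$ by construction. To conclude that $I$ is an ideal of $P(X)^l$ it then suffices to show $\lambda_u(I)\subseteq I$ for every $u\in P(X)$ and $\rho_s(I)\subseteq I$ for every $s\in M(X)$; the statement for $P(X)^r$ follows from the symmetric conditions $\rho_u(I)\subseteq I$ for all $u$ and $\lambda_s(I)\subseteq I$ for $s\in M(X)$. Since $\lambda_u$ and $\rho_u$ are group endomorphisms of $(P(X),\star)$ by Proposition~\ref{rem:actions} and $I$ is normal, it is enough to check that each normal generator $w_\sigma=(x_1\circ\cdots\circ x_n)^{-1}\star x_{\sigma(1)}\circ\cdots\circ x_{\sigma(n)}$ is mapped into $I$: indeed $\lambda_u(g^{-1}\star w_\sigma\star g)=\lambda_u(g)^{-1}\star\lambda_u(w_\sigma)\star\lambda_u(g)$ lies in $I$ as soon as $\lambda_u(w_\sigma)$ does, and likewise for $\rho_u$.

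Set $p=x_1\circ\cdots\circ x_n$ and $q=x_{\sigma(1)}\circ\cdots\circ x_{\sigma(n)}$, both in $M(X)$, so that $w_\sigma=p^{-1}\star q\in I$. The key observation is that for any monomial $m\in M(X)$ the products $m\circ p$ and $m\circ q$ (respectively $p\circ m$ and $q\circ m$) are $\circ$-products of the same multiset of generators from $X$, differing only by a permutation of their factors; hence $(m\circ p)^{-1}\star(m\circ q)$ and $(p\circ m)^{-1}\star(q\circ m)$ are again among the defining generators of $I$. Passing to the group $Q:=P(X)/I$ and writing $\overline{\,\cdot\,}$ for images, this yields $\overline p=\overline q$ together with $\overline{m\circ p}=\overline{m\circ q}$ and $\overline{p\circ m}=\overline{q\circ m}$ for every $m\in M(X)$.

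Next I would compute the generators directly. Because $p,q\in M(X)$ are monomials, $\circ_l$ and $\circ_r$ agree on such an argument, and the leading and trailing factors cancel, giving the identities $\lambda_u(w_\sigma)=(u\circ p)^{-1}\star(u\circ q)$ and $\rho_u(w_\sigma)=u\star(p\circ u)^{-1}\star(q\circ u)\star u^{-1}$ in $P(X)$. Writing $u=\prod_j u_j^{\epsilon_j}$ and expanding $u\circ p=\bigl(\prod_j(u_j\circ p\star p^{-1})^{\epsilon_j}\bigr)\star p$, I project to $Q$: by the observation above each factor satisfies $\overline{u_j\circ p\star p^{-1}}=\overline{u_j\circ q\star q^{-1}}$, and the trailing factor obeys $\overline p=\overline q$, whence $\overline{u\circ p}=\overline{u\circ q}$ and therefore $\overline{\lambda_u(w_\sigma)}=\overline e$, i.e.\ $\lambda_u(w_\sigma)\in I$. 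A symmetric expansion of $p\circ u$ gives $\rho_u(w_\sigma)\in I$, and the monomial cases $\rho_s,\lambda_s$ with $s\in M(X)$ are the special instances where the computation instead produces an honest conjugate of a single generator $w_\tau$. By Lemma~\ref{lem:idealcrit} this establishes that $I$ is an ideal of both $P(X)^l$ and $P(X)^r$. The main obstacle is precisely this general-$u$ computation: one must expand the explicit word $u\circ p$ and keep track of the exponents $\epsilon_j$ and the position of the trailing (resp.\ leading) factor $p$, the cleanest route being to pass to $Q$ so that the three families of congruences collapse the formula to the identity.

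For the final assertion, recall that $[F_w(X),F_w(X)]_\circ=(I\star J)/J$ with $J=[P(X)^2,P(X)^2_{\mathrm{op}}]$, which is an ideal of $P(X)^l$ and $P(X)^r$ by Proposition~\ref{pro:comideal}. Since the $\star$-product of two ideals is again an ideal, $I\star J$ is an ideal of both containing $J$, so its image in $F_w(X)=P(X)/J$ is an ideal of $F_w(X)$, as claimed.
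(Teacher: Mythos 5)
Your proposal is correct and takes essentially the same approach as the paper: both invoke Lemma~\ref{lem:idealcrit}, reduce to the normal generators, and exploit the observation that $\circ$-products of the same generators in different orders are congruent modulo $[P(X),P(X)]_\circ$ --- your identity $\lambda_u(w_\sigma)=(u\circ p)^{-1}\star(u\circ q)$ is exactly the paper's displayed formula, with your passage to the quotient group $P(X)/I$ playing the role of the paper's inductive telescoping modulo the ideal. The only differences are cosmetic: you also verify $\rho_u$ for arbitrary $u$ (more than Lemma~\ref{lem:idealcrit} requires) and spell out the ``in particular'' claim via the corollary on $\star$-products of ideals, a step the paper leaves implicit.
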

    \begin{proof}
        We show the proposition only for $P(X)^l$ as the situations are symmetric. Let $m=x_1\circ \dots \circ x_n$, $m_\sigma=x_{\sigma(1)}\circ\dots \circ x_{\sigma(n)}$ and $u=\prod_{i=1}^n u_i^{\epsilon_i}$, We have
        \[
            \lambda_u(m^{-1}\star m_\sigma)=m^{-1}\star \prod_{i=n}^1 \rho_{m}(u_i)^{-\epsilon_i}\star \prod_{i=1}^n \rho_{m_\sigma}(u_i)^{\epsilon_i}\star m_\sigma.
        \]
        Since $\rho_m(u_i)^{-\epsilon_i}\star \rho_{m_\sigma}(u_i)^{\epsilon_i}\in [P(X),P(X)]_\circ$, we can deduce by induction that $\lambda_u(m^{-1}\star m_\sigma)\in [P(X),P(X)]_\circ$ so that $\lambda_u([P(X),P(X)]_\circ)\subset [P(X),P(X)]_\circ$. With a similar argument, on can see that $\rho_l([P(X),P(X)]_\circ)\subset [P(X),P(X)]_\circ$ for all monomial $l\in M(X)$. This concludes the proof by Lemma~\ref{lem:idealcrit}.\qedhere

    \end{proof}
    \begin{corollary}
       The wire 
       \[F_{\mathrm{cw}}(X)=F_w(X)/[F_w(X),F_w(X)]_\circ
       \]
       is the free commutative wire on the set $X$.
    \end{corollary}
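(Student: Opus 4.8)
The plan is to invoke the universal property of the quotient in the category of wires, reducing the claim to the verification that the defining relations of $F_{\mathrm{cw}}(X)$ are precisely those forced by imposing commutativity of $\circ$ on top of the free wire structure. First I would fix a commutative wire $W$ together with an arbitrary map $g\colon X\to W$. By the universal property of the free wire $F_w(X)$ established above, there is a unique wire morphism $\hat{g}\colon F_w(X)\to W$ extending $g$. The task is then to show that $\hat{g}$ factors uniquely through the quotient $F_w(X)\to F_{\mathrm{cw}}(X)$, which by the quotient universal property (the proposition on wire morphisms killing an ideal) amounts to checking that $[F_w(X),F_w(X)]_\circ\subseteq\ker(\hat{g})$.

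Next I would reduce this containment to the generators. Since $\hat{g}$ is a wire morphism and $[F_w(X),F_w(X)]_\circ$ is, by construction, the image in $F_w(X)$ of the normal subgroup $[P(X),P(X)]_\circ$, it suffices to show that $\hat{g}$ sends each generator $(x_1\circ\dots\circ x_n)^{-1}\star x_{\sigma(1)}\circ\dots\circ x_{\sigma(n)}$ to the neutral element $e$ of $W$. Applying $\hat{g}$ and using that it respects both $\star$ and $\circ$, this element maps to $(g(x_1)\circ\dots\circ g(x_n))^{-1}\star g(x_{\sigma(1)})\circ\dots\circ g(x_{\sigma(n)})$ in $W$. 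Because $(W,\circ)$ is commutative, the two $\circ$-products agree, so the whole expression is $e$. Thus every generator lies in the kernel, and since the kernel of a wire morphism is a normal subgroup (indeed an ideal) of $W_\star$, the normal closure $[F_w(X),F_w(X)]_\circ$ is contained in $\ker(\hat{g})$ as required.

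Having established the factorization, I would let $\bar{g}\colon F_{\mathrm{cw}}(X)\to W$ denote the induced wire morphism and verify it is the unique one extending $g$ along the composite injection $X\hookrightarrow P(X)\to F_w(X)\to F_{\mathrm{cw}}(X)$. Uniqueness follows by a now-standard chain: any wire morphism out of $F_{\mathrm{cw}}(X)$ agreeing with $g$ on $X$ pulls back to a wire morphism out of $F_w(X)$ agreeing with $g$, which the freeness of $F_w(X)$ pins down uniquely, and the surjectivity of $F_w(X)\to F_{\mathrm{cw}}(X)$ then forces uniqueness downstairs. I would also note that $F_{\mathrm{cw}}(X)$ is genuinely a \emph{commutative} wire: by the preceding proposition $[F_w(X),F_w(X)]_\circ$ is an ideal, so the quotient carries a wire structure, and the relation $[x_{\sigma(1)}\circ\dots\circ x_{\sigma(n)}]=[x_1\circ\dots\circ x_n]$ holding for all generators and permutations forces $\circ$ to be commutative on the whole monoid, since $X$ generates $(F_{\mathrm{cw}}(X),\circ)$.

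I do not expect a serious obstacle here, as the argument is purely formal once the ideal structure is in place; the only point requiring mild care is the reduction from commutativity on the generating set $X$ to commutativity of the full circle monoid. This is harmless because every element of $F_{\mathrm{cw}}(X)_\circ$ is a $\circ$-product of images of elements of $X$, and the quotient relations say exactly that such products are invariant under reordering of the $X$-factors; one should remark that the brace distributivities guarantee $\circ$-products of arbitrary words reduce to combinations of $\circ$-products of monomials in $X$, so the generator relations suffice to yield full $\circ$-commutativity. The verification that $\bar{g}$ is well-defined on equivalence classes is subsumed in the quotient universal property and needs no separate computation.
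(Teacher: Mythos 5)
Your overall plan coincides with the paper's (very terse) proof, which simply invokes the universal property of quotients; your first two paragraphs carry that half out correctly. Any wire morphism $\hat{g}\colon F_w(X)\to W$ into a commutative wire $W$ kills each generator $(x_1\circ \dots \circ x_n)^{-1}\star x_{\sigma(1)}\circ\dots \circ x_{\sigma(n)}$, hence the whole normal subgroup they generate, so $\hat{g}$ factors through the quotient; uniqueness follows from the freeness of $F_w(X)$ and the surjectivity of $F_w(X)\to F_{\mathrm{cw}}(X)$. (Minor slip: the kernel is a normal subgroup of $F_w(X)_\star$, not of $W_\star$.)

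The weak point is the verification that $F_{\mathrm{cw}}(X)$ is itself a \emph{commutative} wire, and this is not a formality: $F_w(X)$ also satisfies the factorization property you prove (every map from $X$ into a commutative wire extends uniquely to a wire morphism out of $F_w(X)$), yet it is not the free commutative wire; what distinguishes the quotient is precisely the commutativity of $\circ$. Your justification rests on the assertion that ``$X$ generates $(F_{\mathrm{cw}}(X),\circ)$'', equivalently that every element of $F_{\mathrm{cw}}(X)_\circ$ is a $\circ$-product of images of elements of $X$. That is false: a general element is a $\star$-word in monomials (e.g.\ $[x_1\star x_2]$ is not the class of any monomial), so the defining relations do not directly apply to it. Your follow-up remark --- that the brace distributivities reduce arbitrary $\circ$-products to ``combinations'' of monomial products --- is the right idea but not a proof as stated, since one must check that these combinations are insensitive to the order of the two factors. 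The clean way to close the gap: in any wire, the $\circ$-centralizer $C(w)=\{u : u\circ w = w\circ u\}$ of a fixed element $w$ is a subgroup of the $\star$-group. Indeed, closure under $\star$ follows by comparing $(u\star v)\circ w = u\circ w\star w^{-1}\star v\circ w$ with $w\circ (u\star v)=w\circ u\star w^{-1}\star w\circ v$, and closure under $\star$-inverse follows from the identities $u^{-1}\circ w = w\star (u\circ w)^{-1}\star w$ and $w\circ u^{-1}=w\star (w\circ u)^{-1}\star w$, both consequences of the two distributivities applied to $u^{-1}\star u=e$. Since images of monomials pairwise commute in $F_{\mathrm{cw}}(X)$ (the $\circ$-products of two monomials in either order differ by a permutation of the generators, so the defining relations identify them) and the monomials $\star$-generate the quotient, two successive applications of the centralizer argument yield commutativity of $\circ$ on all of $F_{\mathrm{cw}}(X)$. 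With that inserted, your proof is complete and matches the paper's intended argument.
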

    \begin{proof}
        This is a consequence of the universal property of quotients.
    \end{proof}
    Another more explicit construction can be done. We proceed as we did in the previous section but instead of taking as the set of monomials the free semigroup $M(X)$, we use the free commutative semigroup $(M_{\mathrm{c}}(X),\circ)$ generated by $X$. Then we take as the set of polynomials the free group $(P_{\mathrm{c}}(X),\star)$ generated by $M_{\mathrm{c}}(X)$. We can define on $P_{\mathrm{c}}(X)$ the operations $\circ_r$ and $\circ_l$ with the same formulas as in Definition~\ref{def:operations}. Notice that in this case  $u\circ_l v=v\circ_ru$. In fact, using the universal property of $P(X)$ and $P_{\mathrm{c}}(X)$ and the quotient, one can see that as groups, $P_{\mathrm{c}}(X)\cong P(X)/[P(X),P(X)]_\circ$ so that $P_{\mathrm{c}}(X)^l=(P_{\mathrm{c}}(X),\star,\circ_l)\cong P(X)^l/[P(X),P(X)]_\circ$ and $P_{\mathrm{c}}(X)^r=(P_{\mathrm{c}}(X),\star,\circ_r)\cong P(X)^r/[P(X),P(X)]_\circ$.

    Let $P_{\mathrm{c}}(X)^2$ be the normal subgroup of $P_{\mathrm{c}}(X)$ generated by the elements $m\cdot l=m^{-1}\star m\circ l\star l^{-1}$ for all $m,l\in M_{\mathrm{c}}(X)$. $P_{\mathrm{c}}(X)^2$ is the image of both $P(X)^2$ and $P(X)^2_{\mathrm{op}}$ under the canonical morphism $P(X)\to P_{\mathrm{c}}(X)$. Using standard arguments, one can show that the image of an ideal by a surjective wire morphism is an ideal. Hence, $P_{\mathrm{c}}(X)^2$ is an ideal of both $P_{\mathrm{c}}(X)^l$ and $P_{\mathrm{c}}(X)^r$. It follows that $[P_{\mathrm{c}}(X)^2,P_{\mathrm{c}}(X)^2]$ is also an ideal of both. In fact, there is an isomorphism as groups
    \[
        F_w(X)/[F_w(X),F_w(X)]_\circ\cong P_{\mathrm{c}}(X)/[P_{\mathrm{c}}(X)^2,P_{\mathrm{c}}(X)^2]
    \]
    which can be seen to be an isomorphism of left and right wires. In particular the left and right wire structures of $P_{\mathrm{c}}(X)/[P_{\mathrm{c}}(X)^2,P_{\mathrm{c}}(X)^2]$ are identical, meaning that it is a wire.
    
    \begin{proposition}
        The wire $P_{\mathrm{c}}(X)/[P_{\mathrm{c}}(X)^2,P_{\mathrm{c}}(X)^2]$ is the free commutative wire generated by $X$.
    \end{proposition}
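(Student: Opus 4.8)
The plan is to deduce the statement from the preceding corollary, which already identifies $F_{\mathrm{cw}}(X)=F_w(X)/[F_w(X),F_w(X)]_\circ$ as the free commutative wire on $X$, by transporting freeness across the wire isomorphism $F_{\mathrm{cw}}(X)\cong P_{\mathrm{c}}(X)/[P_{\mathrm{c}}(X)^2,P_{\mathrm{c}}(X)^2]$ recorded just above. Freeness is preserved by any isomorphism that matches the two distinguished copies of the generating set, so the only point I really need to check is that this isomorphism sends the class of each $x\in X$ to the class of $x$. This is immediate from its construction, since it is induced by the canonical surjection $P(X)\to P_{\mathrm{c}}(X)$, which is the identity on $X$.

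To make the isomorphism explicit I would start from the canonical surjection $\pi\colon P(X)\to P_{\mathrm{c}}(X)=P(X)/[P(X),P(X)]_\circ$, whose restriction to monomials abelianizes $M(X)$ to $M_{\mathrm{c}}(X)$. The key observation is that $\pi$ carries both $P(X)^2$ and $P(X)^2_{\mathrm{op}}$ onto $P_{\mathrm{c}}(X)^2$: in $P_{\mathrm{c}}(X)$ the operation $\circ$ is commutative, so the generating elements $m\cdot l$ and $m\cdot_{\mathrm{op}}l$ coincide, forcing the two images to agree. Consequently $\pi$ sends $[P(X)^2,P(X)^2_{\mathrm{op}}]$ onto $[P_{\mathrm{c}}(X)^2,P_{\mathrm{c}}(X)^2]$, while $[P(X),P(X)]_\circ$ maps to the trivial circle commutator of $P_{\mathrm{c}}(X)$. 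Combining this with the third isomorphism theorem yields the asserted group isomorphism; since $\pi$ is a morphism of both $P(X)^l$ and $P(X)^r$, the induced map respects both wire structures, and because those structures agree on the quotient it is an isomorphism of wires fixing $X$.

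For a fully self-contained alternative I would instead verify the universal property directly, in parallel with the proof that $F_w(X)$ is free. Given a commutative wire $W$ and a map $f\colon X\to W$, I would extend $f$ first to a monoid morphism $M_{\mathrm{c}}(X)\to W_\circ$ (using that $W_\circ$ is commutative), then to a group morphism $f_g\colon P_{\mathrm{c}}(X)\to W_\star$; by construction $f_g$ is a wire morphism for both $\circ_l$ and $\circ_r$. Because $W$ is commutative one has $W^2=W^2_{\mathrm{op}}$, so Proposition~\ref{pro:commutator trivial} gives $[W^2,W^2_{\mathrm{op}}]=0$. Since $f_g(P_{\mathrm{c}}(X)^2)\subseteq W^2$, the morphism $f_g$ kills $[P_{\mathrm{c}}(X)^2,P_{\mathrm{c}}(X)^2]$ and therefore descends to a wire morphism $\hat{f}$ on the quotient, unique because $X$ generates it.

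I expect the only genuine obstacle to be the bookkeeping in the second paragraph: confirming that the single quotient $P_{\mathrm{c}}(X)/[P_{\mathrm{c}}(X)^2,P_{\mathrm{c}}(X)^2]$ is obtained from $P(X)$ by collapsing exactly $[P(X),P(X)]_\circ\star[P(X)^2,P(X)^2_{\mathrm{op}}]$, and that $\circ_l$ and $\circ_r$ descend to one and the same operation. The algebraic core—the vanishing $[W^2,W^2_{\mathrm{op}}]=0$ that forces every such morphism to factor—is already available, so no new computation of that kind is required.
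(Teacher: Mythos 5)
Your proposal is correct and takes essentially the same route as the paper: the paper likewise obtains this proposition by transporting freeness of $F_{\mathrm{cw}}(X)=F_w(X)/[F_w(X),F_w(X)]_\circ$ (the preceding corollary) across the isomorphism with $P_{\mathrm{c}}(X)/[P_{\mathrm{c}}(X)^2,P_{\mathrm{c}}(X)^2]$, which it justifies exactly as you do, via the canonical surjection $P(X)\to P_{\mathrm{c}}(X)$ carrying both $P(X)^2$ and $P(X)^2_{\mathrm{op}}$ onto $P_{\mathrm{c}}(X)^2$ and hence $[P(X)^2,P(X)^2_{\mathrm{op}}]$ onto $[P_{\mathrm{c}}(X)^2,P_{\mathrm{c}}(X)^2]$. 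Your self-contained alternative via the universal property (using that $W^2=W^2_{\mathrm{op}}$ in a commutative wire $W$, so that Proposition~\ref{pro:commutator trivial} makes $f_g$ kill $[P_{\mathrm{c}}(X)^2,P_{\mathrm{c}}(X)^2]$) is also sound, being a commutative rerun of the paper's proof that $F_w(X)$ is free.
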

    
   To understand the structure of this quotient, Fox calculus will be employed. We begin by briefly recalling the theory of derivations in group rings, as presented in~\cite{MR53938}. Basic properties of group rings are stated without proof; details can be found in Chapter 3, Section 3 of~\cite{milies2002introduction}.
    \begin{definition}
        Let $G$ be a group and $R$ a unital ring. The group ring of $G$ over $R$ is the set of formal sums
        \[\sum_{g\in G}a_gg
        \]
        such that $a_g\in R$ and almost all $a_g$ are zero. The sum is defined component wise and the multiplication is extended from $G$ by bilinearity. The unit $1$ of the multiplication is the formal sum whose coefficients are all zero except $a_e=1$. When $R$ is commutative, $R[G]$ is an $R$-algebra and it is called the group algebra of $G$ over $R$.
    \end{definition}
    \begin{proposition}
        Let $G$ and $H$ be groups. Any group morphism $\phi \colon G\to H$ can be extended to an algebra morphism $\phi^* \colon R[G]\to R[H]$ by defining $\phi^*\left(\sum_{g\in G}a_gg\right)=\sum_{g\in G}a_g\phi(g)$.
    \end{proposition}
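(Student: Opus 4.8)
The plan is to verify directly that the prescribed formula defines a ring homomorphism (an $R$-algebra morphism when $R$ is commutative), exploiting the description of $R[G]$ as the free $R$-module on the underlying set $G$ whose multiplication is extended $R$-bilinearly from the group law of $G$. Conceptually, the statement is just the assertion that $R[-]$ is functorial on group morphisms, so the proof amounts to checking that the evident relabelling map $g \mapsto \phi(g)$ extends to a morphism.

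First I would observe that the formula $\phi^*\!\left(\sum_{g\in G} a_g g\right) = \sum_{g\in G} a_g\,\phi(g)$ is well-defined and additive. Indeed, an element of $R[G]$ is determined by its coefficient family $(a_g)_{g\in G}$, and $\phi^*$ is precisely the $R$-linear extension of the set map $g \mapsto \phi(g)$ along the inclusion $G \hookrightarrow R[G]$; since addition in both group rings is componentwise, $R$-linearity gives additivity for free. Next, I would check preservation of the unit: the unit of $R[G]$ is the basis element $e_G$ carrying coefficient $1$, and because $\phi$ is a group morphism we have $\phi(e_G) = e_H$, so $\phi^*(1) = \phi(e_G) = e_H = 1$.

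The only point carrying any content is multiplicativity, and by bilinearity of the two products it suffices to check it on basis elements. For $g, g' \in G$, using that $\phi$ respects the group law, $\phi^*(g\cdot g') = \phi^*(gg') = \phi(gg') = \phi(g)\phi(g') = \phi^*(g)\phi^*(g')$. Extending $R$-bilinearly, for $\alpha = \sum_{g} a_g g$ and $\beta = \sum_{g'} b_{g'} g'$ one computes that both $\phi^*(\alpha\beta)$ and $\phi^*(\alpha)\phi^*(\beta)$ equal $\sum_{g,g'} a_g b_{g'}\,\phi(g)\phi(g')$, so they coincide. The main (indeed essentially the only) obstacle is bookkeeping rather than mathematics: one must respect the convention that the coefficients from $R$ commute with the group elements inside the group ring, so that reindexing the double sum by $(g,g') \mapsto (gg')$ is legitimate and the coefficients $a_g b_{g'}$ can be factored out before applying $\phi$. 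Once this convention is made explicit, the verification is forced and the proposition follows.
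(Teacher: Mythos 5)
Your proof is correct. Note that the paper itself offers no proof of this proposition: it is stated as a basic property of group rings, with a pointer to Chapter 3, Section 3 of Milies--Sehgal for details. Your direct verification---$R$-linear extension of $g \mapsto \phi(g)$, preservation of the unit via $\phi(e_G)=e_H$, and multiplicativity checked on basis elements and extended bilinearly---is exactly the standard argument one would find in that reference, so there is nothing to reconcile between the two.
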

    \begin{definition}
    \label{def:augmentationmap}
        The augmentation map $\alpha\colon R[G]\to R$ is the extension of the trivial morphism $G\to \{e\}$.
    \end{definition}
    \begin{definition}
        Let $G$ be a group and $H$ a subgroup of $G$. We define $\Delta(H,G)$ to be the left ideal of $R[G]$ generated by the elements $h-1$ with $h\in H$.
    \end{definition}
    \begin{proposition}
        \label{pro:normalideal}
        If $H$ is a normal subgroup of $G$, then $\Delta(H,G)$ is a two-sided ideal of $R[G]$ and $R[G]/\Delta(H,G)\cong R\left[G/H\right]$.
    \end{proposition}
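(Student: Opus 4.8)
The plan is to realize $R[G/H]$ as a quotient of $R[G]$ via the canonical projection and to identify $\Delta(H,G)$ as its kernel, after first checking the ideal claim. Since $\Delta(H,G)$ is a left ideal by definition, I would establish two-sidedness by verifying stability under right multiplication. For a generator $h-1$ with $h\in H$ and any $g\in G$, the computation $(h-1)g = g(g^{-1}hg-1)$ combined with the normality of $H$ gives $g^{-1}hg\in H$, so $(h-1)g$ lies in the left ideal generated by the $H$-generators; extending by $R$-bilinearity then yields stability under right multiplication by arbitrary elements of $R[G]$.

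Next, let $\pi\colon G\to G/H$ be the canonical projection and $\pi^*\colon R[G]\to R[G/H]$ its extension to an algebra morphism. This map is surjective, since every basis element $gH$ of $R[G/H]$ is the image of $g$. The claim is that $\ker(\pi^*)=\Delta(H,G)$. The inclusion $\Delta(H,G)\subseteq\ker(\pi^*)$ is immediate, as $\pi^*(h-1)=\pi(h)-1=0$ for $h\in H$ (the coset $\pi(h)$ being the identity) and kernels are ideals.

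For the reverse inclusion, the key observation is that modulo $\Delta(H,G)$ any two elements of a common coset agree, because $gh-g=g(h-1)\in\Delta(H,G)$ for $h\in H$. Given $\xi=\sum_{g\in G}a_g g\in\ker(\pi^*)$, the condition $\pi^*(\xi)=0$ says exactly that for every coset $C\in G/H$ the partial sum $\sum_{g\in C}a_g$ vanishes in $R$. Grouping the terms of $\xi$ by cosets and replacing each group element of $C$ by a fixed representative $g_C$ modulo $\Delta(H,G)$, one obtains $\xi\equiv\sum_{C}\bigl(\sum_{g\in C}a_g\bigr)g_C\equiv 0 \pmod{\Delta(H,G)}$, whence $\xi\in\Delta(H,G)$. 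The first isomorphism theorem applied to $\pi^*$ then gives $R[G]/\Delta(H,G)\cong R[G/H]$. The only delicate point is this reverse kernel inclusion, where one must track the coset decomposition carefully; everything else is a routine verification.
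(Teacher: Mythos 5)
Your proof is correct and complete: two-sidedness follows from the identity $(h-1)g = g(g^{-1}hg-1)$ together with normality of $H$, and the isomorphism follows from identifying $\Delta(H,G)$ as the kernel of the algebra morphism $\pi^*$ extending the canonical projection, via the coset-grouping argument. Note that the paper states this proposition without proof (deferring to the cited group-ring literature), so there is nothing in-paper to compare against; your argument is exactly the standard textbook proof that the paper implicitly relies on.
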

        
    For the next few results we will denote by $F$ the free group generated by some set $Y$.
    \begin{definition}
        Let $u=\prod_{i=1}^nu_i^{\epsilon_i}\in F$ and $1\leq k\leq n$. The $k$-th initial section $u_{(k)}$ of $u$ is $\prod_{i=1}^{k-1}u_i^{\epsilon_i}$ if $\epsilon_{k}=1$ and $\prod_{i=1}^{k}u_i^{\epsilon_i}$ if $\epsilon_k=-1$. 
    \end{definition}
    \begin{definition}[(2.8) of~\cite{MR53938}]
    \label{def:partialderiv}
        Let $s\in Y$, we define the Fox derivative with respect to $s$ of a word $u=\prod_{i=1}^nu_i^{\epsilon_i}\in F$ to be
            \[\pdv{u}{s}=\sum_k\epsilon_ku_{(k)}\in \mathbb{Z}[F].\]
        where the sum runs over all indices $k$ such that $u_k=s$.
    \end{definition}
    Note that Definition~\ref{def:partialderiv} is not the original definition of the derivative as given in~\cite{MR53938}, but this is really how we think about derivatives when we do computations in the present paper.
    \begin{remark}
        The definition of the derivative does not depend on the representative of the word $u$.
    \end{remark}
    \begin{remark}
        One can extend the derivatives by linearity into maps $\pdv{}{s}\colon \mathbb{Z}[F]\to \mathbb{Z}[F]$.
    \end{remark}
    \begin{proposition}
        Let $u,v\in \mathbb{Z}[F]$ and $s\in Y$ then
        \[\pdv{u\cdot v}{s}=\pdv{u}{s}\alpha(v)+u\pdv{v}{s}.
        \]
        In particular if $u\in F$, 
        \[\pdv{u^{-1}}{s}=-[u^{-1}]\pdv{u}{s}.\]
    \end{proposition}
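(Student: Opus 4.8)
The plan is to prove the product rule first for group elements $u,v\in F$, then to extend it to all of $\mathbb{Z}[F]$ by bilinearity, and finally to deduce the inverse formula by applying the product rule to $u\cdot u^{-1}=e$.

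First I would take $u,v\in F$, say $u=\prod_{i=1}^n u_i^{\epsilon_i}$ and $v=\prod_{j=1}^m v_j^{\delta_j}$, and compute $\pdv{u\cdot v}{s}$ directly from Definition~\ref{def:partialderiv} using the concatenated word $u_1^{\epsilon_1}\cdots u_n^{\epsilon_n}v_1^{\delta_1}\cdots v_m^{\delta_m}$ of length $n+m$. This word need not be reduced, but by the remark following Definition~\ref{def:partialderiv} the derivative is independent of the chosen representative, so computing with it is legitimate. The indices $k$ whose letter equals $s$ fall into two groups: those with $k\le n$ (lying in the $u$-block) and those of the form $k=n+j$ (lying in the $v$-block). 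For $k\le n$ the $k$-th initial section of $u\cdot v$ involves only letters of $u$ and therefore coincides with $u_{(k)}$, so these terms sum to $\pdv{u}{s}$. For $k=n+j$ the $k$-th initial section is all of $u$ followed by the $j$-th initial section of $v$, that is $u\cdot v_{(j)}$, so these terms sum to $u\,\pdv{v}{s}$. Since $\alpha(v)=1$ for every $v\in F$, the total is exactly $\pdv{u}{s}\alpha(v)+u\,\pdv{v}{s}$.

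Next I would extend the identity to arbitrary elements $u=\sum_g a_g g$ and $v=\sum_h b_h h$ of $\mathbb{Z}[F]$. Writing the product as $u\cdot v=\sum_{g,h}a_gb_h\,(gh)$ and using that $\pdv{}{s}$ is $\mathbb{Z}$-linear together with the group-element case, one obtains $\pdv{u\cdot v}{s}=\sum_{g,h}a_gb_h\bigl(\pdv{g}{s}\alpha(h)+g\,\pdv{h}{s}\bigr)$. The point of having kept the factor $\alpha(h)$---which equals $1$ on group elements---is that it now lets the coefficients reassemble correctly: the first summand collects to $\bigl(\sum_g a_g\pdv{g}{s}\bigr)\bigl(\sum_h b_h\bigr)=\pdv{u}{s}\alpha(v)$, and the second to $\bigl(\sum_g a_g g\bigr)\bigl(\sum_h b_h\pdv{h}{s}\bigr)=u\,\pdv{v}{s}$. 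This gives the stated formula on all of $\mathbb{Z}[F]$.

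Finally, for $u\in F$ I would apply the product rule to $u\cdot u^{-1}=e$. As $\pdv{e}{s}=0$ (the empty word has no occurrence of $s$) and $\alpha(u^{-1})=1$, the formula reads $0=\pdv{u}{s}+u\,\pdv{u^{-1}}{s}$; left-multiplying by $u^{-1}$ then yields $\pdv{u^{-1}}{s}=-[u^{-1}]\pdv{u}{s}$. I expect the only genuinely delicate step to be the bookkeeping of initial sections at the concatenation boundary in the first paragraph; once that is handled---and combined with the trivial-but-crucial observation that $\alpha$ sends group elements to $1$, so that the bilinear extension recovers $\alpha(v)$ in general---the remainder is routine.
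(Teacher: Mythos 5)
Your proof is correct. Note that the paper itself gives no proof of this proposition: it is stated as a known fact of Fox calculus, with the reader referred to Fox's original paper, so your argument fills in a detail the paper deliberately left to the literature. What makes your write-up genuinely worthwhile is that the paper's Definition of \(\pdv{u}{s}\) (via initial sections) is explicitly \emph{not} Fox's original definition, so the product rule does require re-verification in this formulation, and you do exactly the right bookkeeping: for the concatenated word, positions \(k\le n\) have initial sections lying entirely in the \(u\)-block (in both cases \(\epsilon_k=\pm1\), since the section never reaches past position \(k\)), while positions \(k=n+j\) have initial section \(u\cdot v_{(j)}\), giving \(\pdv{u\cdot v}{s}=\pdv{u}{s}+u\,\pdv{v}{s}\) on group elements; the appeal to independence of the word representative justifies working with the non-reduced concatenation. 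Your bilinear extension step is also the right way to see why the factor \(\alpha(v)\) -- invisible on group elements, where it equals \(1\) -- must appear in the general statement, and the deduction of \(\pdv{u^{-1}}{s}=-[u^{-1}]\pdv{u}{s}\) from \(u\cdot u^{-1}=e\) together with \(\pdv{e}{s}=0\) is the standard and correct conclusion.
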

    The reason we are interested in Fox derivatives is that they provide a way to solve the word problem in $F/[N,N]$ where $N$ is a normal subgroup of $F$.
    \begin{proposition}[4.9 of~\cite{MR53938}]
        Let $N$ be a normal subgroup of $F$. An element $u\in F$ is in $[N,N]$ if and only if $\pdv{u}{s}\in \Delta(N,F)$ for all $s\in Y$.
    \end{proposition}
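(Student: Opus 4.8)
The plan is to prove the two implications separately, reducing the ``only if'' direction to a direct computation and the ``if'' direction to an injectivity statement about group-ring ideals. Throughout I use that, since $N$ is normal, $\Delta(N,F)$ is a two-sided ideal with $\mathbb{Z}[F]/\Delta(N,F)\cong\mathbb{Z}[F/N]$ (Proposition~\ref{pro:normalideal}); in particular, for $w\in F$ one has $w\in N$ if and only if $w-1\in\Delta(N,F)$. I also first record the \emph{fundamental identity}
\[
u-1=\sum_{s\in Y}\pdv{u}{s}(s-1)\in\mathbb{Z}[F],
\]
valid for every $u\in F$ (note $\alpha(u)=1$): it follows by induction on the length of a word representing $u$, using the product rule and the inverse rule stated above.

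For the forward implication, consider the set $K=\{u\in F:\pdv{u}{s}\in\Delta(N,F)\text{ for all }s\in Y\}$. The product rule gives $\pdv{uv}{s}=\pdv{u}{s}+u\pdv{v}{s}$ for $u,v\in F$, and the inverse rule gives $\pdv{u^{-1}}{s}=-u^{-1}\pdv{u}{s}$; since $\Delta(N,F)$ is a left ideal, $K$ is a subgroup of $F$. It therefore suffices to check that every generating commutator $[a,b]$ with $a,b\in N$ lies in $K$. A short computation with the product rule yields
\[
\pdv{[a,b]}{s}=(1-aba^{-1})\pdv{a}{s}+a(1-ba^{-1}b^{-1})\pdv{b}{s},
\]
and both $1-aba^{-1}$ and $1-ba^{-1}b^{-1}$ lie in $\Delta(N,F)$ because $aba^{-1},ba^{-1}b^{-1}\in N$; as $\Delta(N,F)$ is two-sided, each summand lies in $\Delta(N,F)$. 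Hence $[N,N]\subseteq K$, which is the forward implication.

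For the converse, suppose $\pdv{u}{s}\in\Delta(N,F)$ for all $s$. The fundamental identity immediately gives $u-1=\sum_s\pdv{u}{s}(s-1)\in\Delta(N,F)\Delta(F,F)$; in particular $u-1\in\Delta(N,F)$, so $u\in N$. Thus everything reduces to the claim: for $n\in N$, if $n-1\in\Delta(N,F)\Delta(F,F)$ then $n\in[N,N]$. Equivalently, the homomorphism
\[
\Phi\colon N/[N,N]\longrightarrow \Delta(N,F)/\Delta(N,F)\Delta(F,F),\qquad \bar n\mapsto (n-1)+\Delta(N,F)\Delta(F,F),
\]
must be injective. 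That $\Phi$ is well defined is routine from $(nm-1)=(n-1)+n(m-1)$ together with $(n-1)(m-1)\in\Delta(N,F)\Delta(F,F)$.

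The heart of the proof---and the step I expect to be the main obstacle---is the injectivity of $\Phi$. I would prove it by exhibiting a retraction, i.e.\ an additive map $\Psi\colon\Delta(N,F)\to N/[N,N]$ with $\Psi(\Delta(N,F)\Delta(F,F))=0$ and $\Psi(n-1)=\bar n$. Because $\Delta(N,F)$ is the left ideal generated by the elements $n-1$ ($n\in N$), every element is a combination of terms $g(n-1)$ with $g\in F$, $n\in N$; I set $\Psi(g(n-1))=\overline{gng^{-1}}$ and extend linearly. The delicate point is well-definedness: one must check that $\Psi$ respects all relations among the module generators. These relations are governed by the induced-module description $\Delta(N,F)\cong\mathbb{Z}[F]\otimes_{\mathbb{Z}[N]}\Delta(N,N)$ (valid since $\mathbb{Z}[F]$ is free as a right $\mathbb{Z}[N]$-module), so the only relation to verify is $g(nm-1)=g(n-1)+gn(m-1)$; under $\Psi$ both sides give $\overline{gnmg^{-1}}$, using crucially that inner automorphisms of $N$ act trivially on $N/[N,N]$. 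A direct check using normality of $N$ (writing $g(n-1)(h-1)=gh(h^{-1}nh-1)-g(n-1)$) then shows $\Psi$ kills $\Delta(N,F)\Delta(F,F)$. Consequently $\Psi\circ\Phi=\mathrm{id}$, so $\Phi$ is injective and the converse follows.
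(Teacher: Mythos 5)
Your proposal is correct, but there is nothing in the paper to compare it with: the paper states this proposition as a quotation of result 4.9 of Fox's article~\cite{MR53938} and gives no proof of its own. What you have written is a complete, self-contained proof along the classical lines. The forward direction is sound: the set $K$ of words whose derivatives all lie in $\Delta(N,F)$ is a subgroup by the product and inverse rules, your commutator formula $\pdv{[a,b]}{s}=(1-aba^{-1})\pdv{a}{s}+a(1-ba^{-1}b^{-1})\pdv{b}{s}$ is right, and two-sidedness of $\Delta(N,F)$ (Proposition~\ref{pro:normalideal}) places both summands in $\Delta(N,F)$, so $[N,N]\subseteq K$. For the converse, the reduction via the fundamental formula $u-1=\sum_{s\in Y}\pdv{u}{s}(s-1)$ to the injectivity of $\Phi\colon N/[N,N]\to \Delta(N,F)/\Delta(N,F)\Delta(F,F)$ is exactly the classical route (essentially the Magnus--Blanchfield lemma underlying the Magnus embedding, on which the paper's Definition~\ref{def:asspoly} is modelled), and your retraction $\Psi(g(n-1))=\overline{gng^{-1}}$ is the standard device for proving it. The two points a referee would scrutinize are both handled correctly: well-definedness of $\Psi$ does rest on the induced-module isomorphism $\Delta(N,F)\cong \mathbb{Z}[F]\otimes_{\mathbb{Z}[N]}\Delta(N,N)$, which holds since a left-coset decomposition gives $\Delta(N,F)=\bigoplus_i g_i\Delta(N,N)$ with $\Delta(N,N)$ free abelian on $\{n-1: n\neq 1\}$, and your verification of the single balancing relation $g(nm-1)=g(n-1)+gn(m-1)$ correctly uses that inner automorphisms of $N$ act trivially on $N/[N,N]$; likewise the identity $g(n-1)(h-1)=gh(h^{-1}nh-1)-g(n-1)$, together with normality of $N$ (so $h^{-1}nh\in N$), shows $\Psi$ kills $\Delta(N,F)\Delta(F,F)$. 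Hence $\Psi\circ\Phi=\id$, $\Phi$ is injective, and the converse follows. In short: correct, and a useful addition insofar as it makes the paper's citation self-contained.
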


    Determining whether $\pdv{u}{s}\in \Delta(N,F)$ is not difficult when the quotient $F/N$ is easy to handle.
    We now consider the case 
$Y=M_{\mathrm{c}}(X)$, $F=P_{\mathrm{c}}(X)$ and $N=P_{\mathrm{c}}(X)^2$. We will denote $[N,N]$ by $N'$. In this case, $F/N$ is the free commutative group generated by $X$.
    \begin{proposition}
        \label{pro:indeprepre}
        Two words $u,v\in F$ are congruent modulo $N'$ if and only if $\pdv{u}{s}=\pdv{v}{s}$ modulo $\Delta(N, F)$ for all $s\in Y$.
    \end{proposition}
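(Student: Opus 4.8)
The plan is to reduce the statement to Fox's criterion (Proposition 4.9 of~\cite{MR53938}) applied to the single element $w=uv^{-1}$. Since $N'=[N,N]$ is characteristic in the normal subgroup $N$, it is normal in $F$, and $u\equiv v\pmod{N'}$ is equivalent to $uv^{-1}\in N'$. By the cited criterion the latter holds if and only if $\pdv{(uv^{-1})}{s}\in\Delta(N,F)$ for every $s\in Y$. The entire argument then amounts to comparing $\pdv{(uv^{-1})}{s}$ with $\pdv{u}{s}-\pdv{v}{s}$.

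First I would expand, using the product and inverse rules together with $\alpha(v^{-1})=1$:
\[
\pdv{(uv^{-1})}{s}=\pdv{u}{s}-uv^{-1}\pdv{v}{s}=\Big(\pdv{u}{s}-\pdv{v}{s}\Big)+\big(1-uv^{-1}\big)\pdv{v}{s}.
\]
The decisive observation is that the correction term $\big(1-uv^{-1}\big)\pdv{v}{s}$ lies in $\Delta(N,F)$ exactly when $uv^{-1}\in N$: indeed, by Proposition~\ref{pro:normalideal} the set $\Delta(N,F)$ is a two-sided ideal, and it contains $h-1$ for every $h\in N$. Consequently, once we know $uv^{-1}\in N$, the two conditions ``$\pdv{(uv^{-1})}{s}\in\Delta(N,F)$'' and ``$\pdv{u}{s}\equiv\pdv{v}{s}\pmod{\Delta(N,F)}$'' become equivalent for each $s$, which settles both implications at once (combined with Fox's criterion).

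For the forward direction this membership is automatic, since $u\equiv v\pmod{N'}$ forces $uv^{-1}\in N'\subseteq N$. The only genuine work is to recover $uv^{-1}\in N$ in the backward direction from the hypothesis on the derivatives alone. Here I would invoke the fundamental formula $w-1=\sum_{s\in Y}\pdv{w}{s}(s-1)$ for $w\in F$, which is an immediate telescoping consequence of Definition~\ref{def:partialderiv}. Applying it to $u$ and to $v$ and subtracting gives
\[
u-v=\sum_{s\in Y}\Big(\pdv{u}{s}-\pdv{v}{s}\Big)(s-1),
\]
so the hypothesis $\pdv{u}{s}\equiv\pdv{v}{s}\pmod{\Delta(N,F)}$ places $u-v\in\Delta(N,F)$.

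Passing through the isomorphism $\mathbb{Z}[F]/\Delta(N,F)\cong\mathbb{Z}[F/N]$ of Proposition~\ref{pro:normalideal}, and using that distinct group elements are $\mathbb{Z}$-independent in a group ring, the images of $u$ and $v$ in $F/N$ must coincide, i.e.\ $uv^{-1}\in N$. With this in hand the equivalence of the previous paragraph applies, yielding $\pdv{(uv^{-1})}{s}\in\Delta(N,F)$ for all $s$, and Fox's criterion then upgrades $uv^{-1}\in N$ to $uv^{-1}\in N'$. I expect this bootstrapping step---first establishing membership in $N$, and only then promoting it to membership in $N'$---to be the main obstacle, since it is precisely what blocks a one-line reduction to the cited criterion.
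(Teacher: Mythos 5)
Your proof is correct and shares its skeleton with the paper's: both reduce the statement to Fox's criterion applied to $u\star v^{-1}$, both hinge on the identity $\pdv{(u\star v^{-1})}{s}=\pdv{u}{s}-(u\star v^{-1})\pdv{v}{s}$, and both must first bootstrap the weaker membership $u\star v^{-1}\in N$ before Fox's criterion can upgrade it to membership in $N'$. The genuine difference lies in how that bootstrap is carried out. The paper uses that $F/N$ is the free \emph{commutative} group on $X$, so that $[u]=\prod_{m\in Y}[m]^{\alpha\left(\pdv{u}{m}\right)}$; since the augmentation kills $\Delta(N,F)$, equality of derivatives forces $[u]=[v]$ in $F/N$. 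You instead invoke the fundamental formula of Fox calculus, $w-1=\sum_{s\in Y}\pdv{w}{s}(s-1)$, deduce $u-v\in\Delta(N,F)$ from the hypothesis and the two-sidedness of $\Delta(N,F)$, and then use linear independence of group elements in $\mathbb{Z}[F/N]$. Your route is slightly more general: it never uses commutativity of $F/N$, so it establishes the proposition for an arbitrary normal subgroup $N$ of a free group, whereas the paper's augmentation argument is tailored to the abelian quotient at hand; the price is that you need the fundamental formula, which the paper never states explicitly (though it is indeed an easy telescoping consequence of the definition, and a version of it appears implicitly in the verification of $\eva_X(f_u)=[u]$). One small caveat: your assertion that $(1-u\star v^{-1})\pdv{v}{s}\in\Delta(N,F)$ ``exactly when'' $u\star v^{-1}\in N$ overstates matters---the ``only if'' direction can fail, for instance when $\pdv{v}{s}$ already lies in $\Delta(N,F)$---but since your argument only uses the ``if'' direction, nothing breaks.
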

    \begin{proof}
        First notice that in $F/N$,  $[u]=\prod_{m\in Y}[m]^{\alpha\left(\pdv{u}{m}\right)}$ where $\alpha \colon\mathbb{Z}\left[F\right]\to \mathbb{Z}$ is the augmentation map. Thus, the fact that $\pdv{u}{s}=\pdv{v}{s}$ modulo $\Delta(N, F)$ for all $s\in Y$ implies that $[u]=[v]$ in $F/N$. Hence, the claim follows from the identity $\pdv{u\star v^{-1}}{s}=\pdv{u}{s}-(u\star v^{-1})\pdv{v}{s}$.
    \end{proof}
        \begin{notation}
        \label{not:notationsimple}
            In light of Proposition~\ref{pro:indeprepre}, we will, from now on, work only with the residues of the Fox derivatives in \(\mathbb{Z}[F/N]\). To simplify notation, we omit the brackets and write \(\pdv{u}{s}\) instead of \(\left[\pdv{u}{s}\right]\). Since we will no longer consider these elements in \(\mathbb{Z}[F]\), this should not cause confusion.

            Furthermore,  note that because of Proposition~\ref{pro:indeprepre} Fox derivatives factorize through maps $ \mathbb{Z}\left[F/N'\right]\to \mathbb{Z}\left[F/N\right]$. In the sequel, we will sometimes abuse notation and write $\pdv{u}{s}$ for an element $u\in \mathbb{Z}\left[F/N'\right]$, meaning that it is the residue of the Fox derivative of any representative of $u$ in $\mathbb{Z}\left[F/N\right]$.
        \end{notation}
        The following definition is a slight modification of the Magnus embedding~\cite{a3e22350-c9b8-38aa-8153-d26817696fd2}.
    \begin{definition}
    \label{def:asspoly}
        We can associate a polynomial $f_u$ of $\mathbb{Z}[F/N][(t_x)_{x\in X}]$ to any element $u$ of $F$ such that \[f_u=\left(1-\sum_{s\in Y}\pdv{u}{s}\right)+\sum_{s\in Y} \pdv{u}{s}t_s.\]
        By $t_s$ we mean that if $s=x_1\circ \dots \circ x_i$, then $t_s=t_{x_1}\dots t_{x_i}$.
    \end{definition}
   The image of the map
\begin{equation}
\label{eq:deevaluation}
\mathfrak{d}\colon F/N' \to \mathbb{Z}[F/N][(t_x)_{x \in X}], \quad [u] \mapsto f_u
\end{equation}
can be described in terms of evaluation maps. It is immediate that this image lies in \(\eva_1^{-1}(1)\), where
\[
\eva_1\colon \mathbb{Z}[F/N][(t_x)_{x \in X}] \to \mathbb{Z}[F/N], \quad t_x \mapsto 1.
\]
It is also contained in \(\eva_X^{-1}(F/N)\), where
\[
\eva_X\colon \mathbb{Z}[F/N][(t_x)_{x \in X}] \to \mathbb{Z}[F/N], \quad t_x \mapsto [x],
\]
and \(F/N\) is identified with its image in \(\mathbb{Z}[F/N]\).  In fact, we have 
            \begin{equation}
            \label{eq:idk}
                \eva_{X}(f_u)=[u]
            \end{equation}
            for all $u\in F$. Indeed, let $s\in Y$, then $f_s=t_s$ so \eqref{eq:idk} is verified for every generators of $F$. Let $u,v\in F$ for which \eqref{eq:idk} is true, then 
            \begin{align*}
                1+\sum_{s\in Y}\pdv{u\star v^{-1}}{s}([s]-1)([s]-1)=&1+\sum_{s\in Y} \left(\pdv{u}{s}-[u\star v^{-1}]\pdv{v}{s}\right)([s]-1)\\
                =&1+[u]-1-[u\star v^{-1}]([v]-1)\\
                =& [u\star v^{-1}].
            \end{align*}
            So it holds for every elements of $F$.

            Recall from Example~\ref{ex:wiresringmorph} that $\eva_{1}^{-1}(1)\cap \eva_{X}^{-1}(F/N)$ can be given a structure of commutative wire with
        \[
        f\star g= f+\eva_X(f)(g-1)\quad \text{and}\quad f\circ g=f\cdot g
        \]
        for all $f,g\in \eva_1^{-1}(1)\cap \eva_X^{-1}(F/N)$. From now on, we will call this wire $\mathcal{W}(X)$. It is a direct consequence of~\eqref{eq:idk} that $\mathfrak{d}\colon(F/N',\star)\to (\mathcal{W}(X),\star)$ is a group morphism.
        \begin{thm}
    \label{pro:derivativeproduct}
        The map
        \[
        \mathfrak{d}\colon F/N' \to \mathcal{W}(X), \quad [u] \mapsto f_u
        \]
        is a wire morphism.
    \end{thm}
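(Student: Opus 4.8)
The plan is to avoid computing the Fox derivatives of $u\circ v$ by hand and instead identify $\mathfrak{d}$ with the morphism produced by the universal property of the free commutative wire. Recall from just before the statement that $\mathfrak{d}$ is already a group morphism $(F/N',\star)\to(\mathcal{W}(X),\star)$, and that $\mathcal{W}(X)$ is a \emph{commutative} wire (its $\circ$ is ordinary polynomial multiplication, which is commutative). Since $F/N'=P_{\mathrm{c}}(X)/[P_{\mathrm{c}}(X)^2,P_{\mathrm{c}}(X)^2]$ is the free commutative wire on $X$, the assignment $x\mapsto t_x$—which lands in $\mathcal{W}(X)$ because $\eva_1(t_x)=1$ and $\eva_X(t_x)=[x]\in F/N$—extends to a unique wire morphism $\phi\colon F/N'\to\mathcal{W}(X)$. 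The goal is then to show $\mathfrak{d}=\phi$; as $\phi$ is a wire morphism, this equality is exactly the theorem.

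First I would record the base computation. For a monomial $m\in M_{\mathrm{c}}(X)=Y$, regarded as a length-one word of $F=P_{\mathrm{c}}(X)$, Definition~\ref{def:partialderiv} gives $\pdv{m}{s}=\delta_{s,m}$, whence $f_m=\bigl(1-\sum_{s}\pdv{m}{s}\bigr)+\sum_s\pdv{m}{s}\,t_s=t_m$. Thus $\mathfrak{d}([m])=t_m$. On the other hand, because $\phi$ respects $\circ$ and $\phi(x)=t_x$, for $m=x_1\circ\dots\circ x_k$ we obtain $\phi([m])=t_{x_1}\cdots t_{x_k}=t_m$. Hence $\mathfrak{d}$ and $\phi$ agree on the class of every monomial.

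The concluding step is a generation argument. Since $P_{\mathrm{c}}(X)$ is free as a group on the set $M_{\mathrm{c}}(X)$, the classes of the monomials generate $(F/N',\star)$. As $\mathfrak{d}$ and $\phi$ are both $\star$-group morphisms that coincide on this generating set, they are equal as maps $F/N'\to\mathcal{W}(X)$. Therefore $\mathfrak{d}=\phi$ is a wire morphism; in particular, unwinding the definition of $\circ$ in $\mathcal{W}(X)$ recovers the explicit product rule $f_{u\circ v}=f_u\cdot f_v$.

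The point requiring the most care is that agreement of $\mathfrak{d}$ and $\phi$ on the generating set $X$ is by itself \emph{insufficient}: the set $X$ does not generate $(F/N',\star)$, whose $\star$-generators are the monomials $M_{\mathrm{c}}(X)$. The genuine input is thus the computation $f_m=t_m$ for all monomials, together with the fact that $\phi$, being a $\circ$-morphism, automatically sends $[m]$ to $t_m$. A more hands-on alternative would establish $f_{u\circ v}=f_u f_v$ directly: fixing $v$, one shows that $\{[u]:f_{u\circ v}=f_u f_v\}$ is a $\star$-subgroup containing all monomial classes by invoking right brace distributivity in both $F/N'$ and $\mathcal{W}(X)$, and then treats the monomial case by the symmetric argument using left brace distributivity with the trivial base case $f_{m\circ l}=t_m t_l=f_m f_l$. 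This route sidesteps the universal property but requires verifying the relevant wire identities in $\mathcal{W}(X)$ explicitly.
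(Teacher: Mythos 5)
Your proof is correct, and it takes a genuinely different route from the paper's. The paper argues by direct computation: it establishes the Leibniz-type formula~\eqref{eq:pdvmorphism} for $\pdv{u\circ_l v}{s}$ by a layered induction (first $v\in Y$ with an induction on the word length of $u$, then $v$ the inverse of a monomial, then general $v$ via brace distributivity). You instead invoke the universal property: since the proposition preceding the theorem identifies $F/N'=P_{\mathrm{c}}(X)/[P_{\mathrm{c}}(X)^2,P_{\mathrm{c}}(X)^2]$ as the free commutative wire on $X$, and $\mathcal{W}(X)$ is a commutative wire, the assignment $x\mapsto t_x$ extends to a wire morphism $\phi$, and you prove $\mathfrak{d}=\phi$ by noting that both are $\star$-group morphisms (for $\mathfrak{d}$ this is recorded in the paper just before the theorem, as a consequence of~\eqref{eq:idk}) agreeing on the monomial classes, which $\star$-generate $F/N'$. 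Your two points of care are exactly the right ones: the base computation $f_m=t_m$ must be done for all $m\in M_{\mathrm{c}}(X)$, not just for $x\in X$, since $X$ alone does not $\star$-generate $F/N'$; and $\phi([m])=t_m$ comes for free because $\phi$ preserves $\circ$. I also checked there is no circularity: the freeness of $P_{\mathrm{c}}(X)/[P_{\mathrm{c}}(X)^2,P_{\mathrm{c}}(X)^2]$ is obtained in the paper from the free wire construction of Section~\ref{sec:freewire} and quotient universal properties, before any Fox calculus enters. The trade-off between the two approaches is this: yours is shorter and conceptual, making the theorem look inevitable, but it leans on the freeness proposition, whose justification the paper only sketches (the isomorphism $F_w(X)/[F_w(X),F_w(X)]_\circ\cong P_{\mathrm{c}}(X)/[P_{\mathrm{c}}(X)^2,P_{\mathrm{c}}(X)^2]$ is asserted with ``one can see''); the paper's proof is self-contained modulo basic Fox calculus and produces the explicit product rule~\eqref{eq:pdvmorphism} for derivatives, which has independent computational value. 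Your closing sketch of the hands-on alternative (fix $v$, show the set of $[u]$ satisfying the product rule is a $\star$-subgroup via right brace distributivity in both $F/N'$ and $\mathcal{W}(X)$, then handle monomial $u$ symmetrically) is also sound, and is essentially a structured repackaging of the paper's induction.
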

    \begin{proof}
    As mentioned in Notation~\ref{not:notationsimple}, when we write Fox derivatives, we consider in fact their image in $\mathbb{Z}[F/N]$.
        Let $u,v\in F$, we only need to show that,
        $
        f_{[u\circ_l v]}=f_uf_v.
        $
        It is equivalent to show that for all $s\in Y$,
        \begin{equation}
        \label{eq:pdvmorphism}
        \pdv{u\circ_l v}{s} =f_u(0)\pdv{v}{s}+f_v(0)\pdv{u}{s}+\sum_{\substack{s_1,s_2\\s_1\circ s_2=s}}\pdv{u}{s_1}\pdv{v}{s_2},
        \end{equation}
        where by $f_u(0)$ and $f_v(0)$ we mean the evaluation $t_x\mapsto 0$ for all $x\in X$.

        First we will show by induction that the proposition is true when $v\in Y$. The result is clear when either $u=e$ or $v=e$. Suppose that $v\in Y$. Let $u\in F$, such that \eqref{eq:pdvmorphism} is true for $u\circ v$, that is
        \[\pdv{u\circ v}{s} =f_u(0)\pdv{v}{s}+\sum_{\substack{s_1\\s_1\circ v=s}}\pdv{u}{s_1}.\]
        
        Let $u_1\in Y$ and $\epsilon\in \{1,-1\}$. Then,
        \begin{align*}
            \pdv{(u\star u_1^\epsilon)\circ v}{s}=&\pdv{u\circ v}{s}+\epsilon \left[u_1^{\frac{\epsilon -1}{2}}\star u\right]\left(-\pdv{v}{s}+
\pdv{u_1\circ v}{s}\right)\\
        =&\left(f_u(0)-\epsilon \left[u_1^{\frac{\epsilon -1}{2}}\star u\right]\right)\pdv{v}{s}+\sum_{\substack{s_1\\s_1\circ v=s}}\pdv{u}{s_1}+\epsilon \left[u_1^{\frac{\epsilon -1}{2}}\star u\right]
\pdv{u_1\circ v}{s}.
        \end{align*} 
        We have that $f_{u\star u_1^\epsilon}(0)= f_{u}(0)-\epsilon\left[u_1^{\frac{\epsilon-1}{2}}\star u\right]$. Moreover,
        \[
        \sum_{\substack{s_1\\s_1\circ v=s}}\pdv{u\star u_1^{\epsilon}}{s_1}= \sum_{\substack{s_1\\s_1\circ v=s}}\pdv{u}{s_1}+\epsilon \left[u_1^{\frac{\epsilon -1}{2}}\star u\right]\sum_{\substack{s_1\\s_1\circ v=s}}\pdv{u_1}{s_1}
        \]
        and
        \[
        \pdv{u_1\circ v}{s}= \sum_{\substack{s_1\\s_1\circ v=s}}\pdv{u_1}{s_1}.
        \]
        Hence the formula~\eqref{eq:pdvmorphism} is true for $(u\star u_1^\epsilon)\circ v$.
        We have shown that \eqref{eq:pdvmorphism} holds when $v\in Y$. Suppose now that $v=v_1^{-1}$ for some $v_1\in Y$, we have 
        \begin{align*}
            \pdv{u\circ_lv}{s}=&\pdv{u}{s}+[v]\left(-\pdv{u\circ v_1}{s}+\pdv{u}{s}\right)\\
            =&-[v]f_u(0)\pdv{v_1}{s}+([v]+[e])\pdv{u}{s}-[v]\sum_{\substack{s_1\\s_1\circ v_1=s}}\pdv{u}{s_1}            
        \end{align*}
        which is the formula~\eqref{eq:pdvmorphism}.
        
        Suppose now that $v=v_1\star v_2$ with $v_1,v_2\in F$, such that $u\circ_l v_1$ and $u\circ_l v_2$ verify~\eqref{eq:pdvmorphism}. Then,
        \[\pdv{u\circ_l v}{s}=\pdv{u\circ_lv_1}{s}-[v_1]\pdv{u}{s}+[v_1]\pdv{u\circ_lv_2}{s}.\]
        In addition, \[\sum_{\substack{s_1,s_2\\s_1\circ s_2=s}}\pdv{u}{s_1}\pdv{v_1}{s_2}+[v_1]\sum_{\substack{s_1,s_2\\s_1\circ s_2=s}}\pdv{u}{s_1}\pdv{v_2}{s_2}=\sum_{\substack{s_1,s_2\\s_1\circ s_2=s}}\pdv{u}{s_1}\pdv{v}{s_2}\]
        and $f_{v}(0)= f_{v_1}(0)+[v_1]f_{v_2}(0)-[v_1]$ and $f_u(0)\pdv{u\circ_l v}{s}=f_u(0)(\pdv{v_1}{s}+[v_1]\pdv{v_2}{s})$. So $u\circ_l v$ verifies~\eqref{eq:pdvmorphism}.
        \end{proof}
        \begin{lemma}
        \label{lem:quotientmodule}
        the commutative group \( (N/N', \star) \) is a \( \mathbb{Z}[F/N] \)-module via the action defined by
\[
 {[u]_N}\cdot[n]_{N'} = \left[n^{u^{-1}}\right]_{N'}
\]
for all $u \in F $ and $n \in N $, where $[u]_N \in F/N$ and $[n]_{N'} \in N/N'$ denote the equivalence classes of $u$ and $n$.

In addition, $\pdv{f\cdot v}{s}=f\pdv{v}{s}$ for all $f\in \mathbb{Z}[F/N]$ and $v\in N/N'$.
    \end{lemma}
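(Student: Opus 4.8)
The plan is to prove the two assertions separately, each reducing to a short computation once the conjugation convention $n^{u^{-1}}=u\star n\star u^{-1}$ is made explicit. Throughout I work with Fox derivatives as residues in $\mathbb{Z}[F/N]$, per Notation~\ref{not:notationsimple}.

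For the module structure I would first check that the action is well-defined. Since $N'=[N,N]$ is characteristic in $N$ and $N$ is normal in $F$, the subgroup $N'$ is normal in $F$; hence conjugation by any $u\in F$ preserves $N'$ and the rule $[n]_{N'}\mapsto[n^{u^{-1}}]_{N'}$ lands in $N/N'$. Independence of the representative of $[n]_{N'}$ follows by writing $n=n_1\star c$ with $c\in N'$ and using $n^{u^{-1}}=n_1^{u^{-1}}\star c^{u^{-1}}$ together with $c^{u^{-1}}\in N'$. Independence of the representative of $[u]_N$ follows by writing $u=u_1\star k$ with $k\in N$, so that $n^{u^{-1}}=(n^{k^{-1}})^{u_1^{-1}}$; here the identity $n^{k^{-1}}\star n^{-1}=[k,n]\in N'$ gives $n^{k^{-1}}\equiv n \pmod{N'}$, whence the two outcomes agree modulo $N'$. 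The group-action axioms are immediate from
\[
(u_1\star u_2)\star n\star(u_1\star u_2)^{-1}=u_1\star\left(u_2\star n\star u_2^{-1}\right)\star u_1^{-1},
\]
and since conjugation acts by automorphisms of the abelian group $N/N'$, extending $\mathbb{Z}$-linearly yields the desired $\mathbb{Z}[F/N]$-module structure.

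For the second assertion, by $\mathbb{Z}$-linearity it suffices to treat $f=[u]_N$ a single group element, i.e.\ to prove $\pdv{n^{u^{-1}}}{s}=[u]_N\pdv{n}{s}$ in $\mathbb{Z}[F/N]$ for $n\in N$. Applying the product rule $\pdv{ab}{s}=\pdv{a}{s}\alpha(b)+a\pdv{b}{s}$ and the inverse rule $\pdv{u^{-1}}{s}=-[u^{-1}]\pdv{u}{s}$, and using $\alpha(g)=1$ for every group element $g$, a direct computation gives, in $\mathbb{Z}[F]$,
\[
\pdv{u\star n\star u^{-1}}{s}=\pdv{u}{s}+u\pdv{n}{s}-u\,n\,u^{-1}\pdv{u}{s}.
\]
The decisive step is the passage to $\mathbb{Z}[F/N]$: because $n\in N$ we have $[u\,n\,u^{-1}]_N=[u]_N[n]_N[u]_N^{-1}=[e]_N$, so the first and third terms cancel and only $[u]_N\pdv{n}{s}$ survives, as claimed.

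I expect the main points requiring care to be purely bookkeeping: tracking the conjugation convention $n^{u^{-1}}=u\star n\star u^{-1}$ throughout, and, in the well-definedness argument, the step $[k,n]\in N'$ (for $k,n\in N$) that makes conjugation by $N$ trivial on the abelianization $N/N'$. The cancellation in the last display is legitimate precisely because the derivatives are read in $\mathbb{Z}[F/N]$, where $[u\,n\,u^{-1}]_N=[e]_N$; keeping this reduction in view is what turns an otherwise two-line computation into a rigorous proof.
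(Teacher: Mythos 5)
Your proof follows the same route as the paper's own (much terser) proof: direct verification that the action is well-defined, the product-rule computation for a single group element, and then extension to all of $\mathbb{Z}[F/N]$. The well-definedness argument and the group-element computation are correct, including the decisive cancellation of $\pdv{u}{s}$ against $-\left[u\star n\star u^{-1}\right]\pdv{u}{s}$ after reduction to $\mathbb{Z}[F/N]$.

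The one step you assert rather than prove is the reduction ``by $\mathbb{Z}$-linearity it suffices to treat $f=[u]_N$.'' This is not pure bookkeeping, because the addition of the module $(N/N',\star)$ is the group operation $\star$, not a formal sum: for $f=\sum_g a_g g$ one has $f\cdot v=\prod_g (g\cdot v)^{a_g}$ computed in $N/N'$, so passing from the group-element case to general $f$ requires knowing that the derivative is a group homomorphism $(N/N',\star)\to(\mathbb{Z}[F/N],+)$, i.e.
\[
\pdv{(n_1\star n_2)}{s}=\pdv{n_1}{s}+\pdv{n_2}{s}\quad\text{in }\mathbb{Z}[F/N]\text{ for all }n_1,n_2\in N,
\]
together with $\pdv{n^{-1}}{s}=-\pdv{n}{s}$. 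This is precisely the fact the paper's proof singles out as the content of the general case. It is a one-line consequence of the product rule you already use (since $\alpha(n_2)=1$ and $[n_1]_N=[e]$, exactly the same reduction modulo $\Delta(N,F)$ as in your conjugation computation), so the gap is easily closed --- but as written, the phrase ``by $\mathbb{Z}$-linearity'' hides the only point where membership of the representatives in $N$ is essential for the second statement.
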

    \begin{proof}
    Straightforward computations show that the action is well-defined. The second statement holds directly when $f \in F/N$. In the general case, the result follows from the fact that
\[
\frac{\partial (n_1 \star n_2)}{\partial s} = \frac{\partial n_1}{\partial s} + \frac{\partial n_2}{\partial s}
\quad \text{for all } n_1, n_2 \in N \text{ and } s \in Y.\qedhere
\]
    \end{proof}
    The following result and proof are just reformulation of Theorem 2 of~\cite{remeslennikov1970some}.
        \begin{thm}
        \label{thm:isomfreeabwire}
            The wire morphism $\mathfrak{d}$ of Theorem~\ref{pro:derivativeproduct} is an isomorphism.
        \end{thm}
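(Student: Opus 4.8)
The plan is to show that the wire morphism $\mathfrak{d}$ is bijective. \emph{Injectivity} is almost immediate from Proposition~\ref{pro:indeprepre}. Since the monomials $t_s$, $s\in Y$, are linearly independent in $\mathbb{Z}[F/N][(t_x)_{x\in X}]$ and the coefficient of $t_s$ in $f_u$ is exactly $\pdv{u}{s}$, two classes $[u],[v]\in F/N'$ satisfy $f_u=f_v$ if and only if $\pdv{u}{s}=\pdv{v}{s}$ in $\mathbb{Z}[F/N]$ for every $s\in Y$ (the constant terms then agree automatically). By Proposition~\ref{pro:indeprepre} this holds precisely when $u\equiv v \bmod N'$, so $\mathfrak{d}$ is injective.

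For \emph{surjectivity} I would first exploit the group structure. One checks that $\eva_X\colon \mathcal{W}(X)\to F/N$ is a group morphism for $\star$, since $\eva_X(f\star g)=\eva_X(f)+\eva_X(f)(\eva_X(g)-1)=\eva_X(f)\eva_X(g)$; it is surjective because $\eva_X(t_x)=[x]$ and the $[x]$ generate $F/N$; and by~\eqref{eq:idk} we have $\eva_X\circ\mathfrak{d}=\pi$, the canonical projection $F/N'\to F/N$. Writing $K=\ker(\eva_X)$, and noting $\eva_X(f_n)=[n]=1$ for $n\in N$ so that $\mathfrak{d}$ carries $N/N'$ into $K$, the morphism $\mathfrak{d}$ fits into a commuting ladder of short exact sequences of groups
\[
0\to N/N'\to F/N'\xrightarrow{\pi} F/N\to 0,\qquad 0\to K\to \mathcal{W}(X)\xrightarrow{\eva_X} F/N\to 0,
\]
whose right-hand vertical map is the identity and whose middle vertical map is $\mathfrak{d}$. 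By the short five lemma it therefore suffices to prove that the left-hand vertical map $\mathfrak{d}\colon N/N'\to K$ is an isomorphism.

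This last step is the crux, and it is exactly where Theorem~2 of~\cite{remeslennikov1970some} enters. On $K$ the operation $\star$ restricts to the twisted addition of~\eqref{eq:twist}, so $f\mapsto f-1$ identifies $(K,\star)$ with the additive group $\{\,\sum_{s\in Y}c_s(t_s-1): c_s\in\mathbb{Z}[F/N],\ \sum_{s\in Y}c_s([s]-1)=0\,\}$, and reading off coefficients identifies this group $\mathbb{Z}[F/N]$-linearly with $\ker(\partial)$, where $\partial\colon\bigoplus_{s\in Y}\mathbb{Z}[F/N]\to\Delta(F/N,F/N)$ is given by $(c_s)_s\mapsto\sum_s c_s([s]-1)$ and $\Delta(F/N,F/N)$ is the augmentation ideal. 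Under this identification, Lemma~\ref{lem:quotientmodule} shows that $\mathfrak{d}\colon N/N'\to K$ becomes the $\mathbb{Z}[F/N]$-linear Fox map $[n]\mapsto(\pdv{n}{s})_s$, since $f_n-1=\sum_s\pdv{n}{s}(t_s-1)$.

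The content of the cited theorem—Crowell's exact sequence attached to the normal subgroup $N$ of the free group $F$—is precisely that this map is injective with image exactly $\ker(\partial)$; that is, the sequence $0\to N/N'\to\bigoplus_{s\in Y}\mathbb{Z}[F/N]\xrightarrow{\partial}\Delta(F/N,F/N)\to 0$ is exact. Granting this, $\mathfrak{d}\colon N/N'\to K$ is an isomorphism and the five lemma finishes the proof. The genuine difficulty is concentrated entirely in this exactness statement—the surjectivity of the Fox map onto $\ker(\partial)$, i.e.\ realizing every relation $\sum_s c_s([s]-1)=0$ as the Fox gradient of some $n\in N$—which is the substance of Remeslennikov's theorem; I would invoke it rather than reprove it, the remaining work being the routine verifications that assemble the two short exact sequences and the identification $K\cong\ker(\partial)$.
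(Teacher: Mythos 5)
Your proposal is correct, and its skeleton is in substance the paper's: the injectivity step is exactly the (implicit) appeal to Proposition~\ref{pro:indeprepre}, and your five-lemma ladder is a homological repackaging of the paper's reduction, which simply takes $v_1\in F$ representing $\eva_X(f)^{-1}$ and replaces $f$ by $f\star f_{v_1}$, using that $\mathfrak{d}$ is a $\star$-group morphism, so as to assume $\eva_X(f)=1$, i.e.\ $f\in K$ in your notation. The genuine divergence is at the crux you isolate: you invoke Theorem~2 of~\cite{remeslennikov1970some} (the Crowell/relation-module exact sequence) as a black box for the assertion that the Fox map identifies $N/N'$ with the kernel of your map $\partial$, whereas the paper proves this assertion inside its own framework---and that proof \emph{is} the content of the paper's argument. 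Concretely, given $f=1+\sum_{s\in Y}\gamma_s(t_s-1)$ with $\sum_s\gamma_s([s]-1)=0$, the paper lifts each $\gamma_s$ to $\beta_s\in\mathbb{Z}[F/N']$, forms $w=\sum_s\beta_s([s]-1)\in\mathbb{Z}[F/N']$, notes that $w$ maps to $0$ in $\mathbb{Z}[F/N]$ and hence, by Proposition~\ref{pro:normalideal}, can be written as $\sum_i l_i(w_i-1)$ with $l_i\in\mathbb{Z}[F/N']$ and $w_i\in N/N'$, and then uses the module action of Lemma~\ref{lem:quotientmodule} to produce the explicit preimage $u=\prod_i[l_i]\cdot w_i$, whose Fox derivatives are the $\gamma_s$. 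Your route is defensible---the paper itself says its result and proof are a reformulation of the cited theorem---and it has the merit of making the link with the classical exact sequence explicit, as well as spelling out the injectivity and the identification of $K$, which the paper leaves to the reader. What it loses is self-containedness: Remeslennikov's statement cannot be quoted verbatim here but must be translated through $\mathfrak{d}$, the module structure of Lemma~\ref{lem:quotientmodule}, and your identification $K\cong\ker(\partial)$; the paper's lifting argument is precisely that translation, so a complete version of your proof would either include it or carry out the (comparable) bookkeeping needed to match the two formulations.
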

        \begin{proof} 
            Let $f\in \mathcal{W}(X)$ and $v_1\in F$ be any representative of $v=\eva_{X}(f)^{-1}$. Then, $\eva_{X}(f\star f_{v_1})=1$. Since $\mathfrak{d}$ is a group morphism, we may assume that $v=1$. Because $f\in \eva_1^{-1}(1)$, we can write $f=1+\sum_{s\in Y}\gamma_s(t_s-1)$ for some $\gamma_s\in \mathbb{Z}[F/N]$. Then, in $\mathbb{Z}[F/N]$ we have
            \[\sum_{s\in Y}\gamma_s([s]-1)=0.\]
            
            Let $\beta_s\in \mathbb{Z}[F/N']$ be such that $[\beta_s]=\gamma_s$ in $\mathbb{Z}[F/N]$ and $w=\sum_s\beta_s([s]-1)\in \mathbb{Z}[F/N']$, notice that $w$ is such that $\pdv{w}{s}=\gamma_s$ for all $s\in Y$. Also by hypothesis, the image of $w$ is $0$ in $Z\left[F/N\right]$. Hence by Proposition~\ref{pro:normalideal} there exist elements $l_i\in \mathbb{Z}\left[F/N'\right]$ and $w_i\in N/N'$ such that 
            \[
            w=\sum_i l_i(w_i-1).
            \]
            Moreover,
            \[
                \pdv{w}{s}= \sum_i [l_i]\pdv{w_i}{s}.
            \]
            Then, using the action of Lemma~\ref{lem:quotientmodule}, $u=\prod_i [l_i]\cdot w_i\in N/N'$ is such that $\pdv{u}{s}=\pdv{w}{s}=\gamma_s$ for all $s\in Y$. Therefore, $\mathfrak{d}(u)=f$.
        \end{proof}
        \begin{remark}
            It is natural to expect that taking the quotient of \( F/N' \) by the commutator ideal \( [F/N', F/N'] \), as defined in Definition~\ref{def:comideal}, yields the free wire in which both operations are commutative. This can be made explicit via the isomorphism \( \mathfrak{d} \).

Let \( R((t_x)_{x\in X}) \subseteq \mathbb{Z}[(t_x)_{x\in X}] \) be the set of polynomials whose coefficients sum to one, the triple \( (R((t_x)_{x\in X}), \oplus, \cdot) \) is the free wire with both operations commutative generated by $X$ (see Remark~\ref{rem:discfreecombr} and Remark~\ref{rem:ababfreewire}). Define the wire morphism
\[
\bar{\alpha} \colon \mathcal{W}(X) \to R((t_x)_{x\in X}), \quad \gamma + \sum_{s \in Y} \gamma_s t_s \quad \mapsto \quad \alpha(\gamma) + \sum_{s \in Y} \alpha(\gamma_s) t_s
\]
as an extension of the usual augmentation map $\alpha$ of Definition~\ref{def:augmentationmap}.
The morphism \( \bar{\alpha} \) is surjective, since for any
\[ a + \sum_{s \in Y} a_s t_s \in R((t_x)_{x\in X}),
\]
a preimage under \( \bar{\alpha} \) is \( \mathfrak{d}\left([\prod_{s \in Y} s^{a_s}]\right) \) (here it doesn't matter in which order the monomials appear).

If $u \in F$, then $f_u \in \ker(\bar{\alpha})$ if and only if each monomial $m \in Y$ appears in the expression of $u$ the same number of times as $m^{-1}$. This condition is equivalent to $u \in [F, F]$, which implies that
\[
\ker(\bar{\alpha}) = \mathfrak{d}([F/N', F/N']).
\]
\end{remark}
    
        It is well known that if \( F/N \) is a torsion-free commutative group, then the ring \( \mathbb{Z}[F/N][(t_x)_{x\in X}] \) is an integral domain. To see this, it suffices to show that \( \mathbb{Z}[F/N] \) itself is an integral domain. Levi showed in~\cite{levi1913arithmetische} that every torsion-free commutative group admits a total order that is compatible with the group operation. That is, there exists a total order \( \leq \) on \( F/N \) such that for all \( x, y, z \in F/N \), if \( x \leq y \), then \( x \star z \leq y \star z \). For $F/N$ the free commutative group generated by $X$, we can simply take a lexicographic order.

Using this order, one can define the initial monomial of an element 
\[
f = \sum_{g\in F/N} \gamma_g g \in \mathbb{Z}[F/N]
\]
as
\[
\operatorname{in}(f) = \max\{ g \in F/N : \gamma_g \neq 0 \}.
\]
The properties of the ordering ensures that for all \( f, g \in \mathbb{Z}[F/N] \), we have
\[
\operatorname{in}(fg) = \operatorname{in}(f) \star \operatorname{in}(g),
\]
which implies that \( \mathbb{Z}[F/N] \) is an integral domain.

Consequently, the monoid \( (\mathbb{Z}[F/N][(t_x)_{x \in X}], \cdot) \) is cancellative, since it arises from multiplication in an integral domain. It follows that \( (F/N',\circ) \) is itself cancellative, so the commutative wire $(F/N',\star,\circ)$ embeds into its skew brace of fractions.

        \begin{thm}
            \label{theorem:freeskb}
            The free commutative skew brace generated by the set $X$ can be described as the set of fractions $\frac{u}{v}$ such that $u,v\in F/N'$ with relation $\frac{u}{v}=\frac{u_1}{v_1}$ if and only if $u\circ v_1=v \circ u_1$ and with operations 
            
            \[
                \frac{u}{v}\star \frac{u_1}{v_1}=\frac{u\circ v_1\star (v\circ v_1)^{-1}\star v\circ u_1}{v\circ v_1}
            \]
            and \[
                \frac{u}{v}\circ \frac{u_1}{v_1}=\frac{u\circ u_1}{v\circ v_1}.
            \]
        \end{thm}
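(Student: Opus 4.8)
The plan is to recognize the object in the statement as the skew brace of fractions of the free commutative wire $F/N'$, and then to check that this skew brace of fractions satisfies the universal property defining the free commutative skew brace. Most of the groundwork is already available: the preceding proposition identifies $(F/N',\star,\circ)$ as the free commutative wire on $X$, and the discussion just above the theorem shows that $(F/N',\circ)$ embeds into the multiplicative monoid of the integral domain $\mathbb{Z}[F/N][(t_x)_{x\in X}]$ and is therefore cancellative.

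First I would apply Proposition~\ref{pro:fractions} to the commutative wire $W=F/N'$. This yields a commutative skew brace $B(W)$ consisting of fractions $\frac{u}{v}$ with $u,v\in F/N'$, with circle operation $\frac{u}{v}\circ\frac{u_1}{v_1}=\frac{u\circ u_1}{v\circ v_1}$ and star operation $\frac{u}{v}\star\frac{u_1}{v_1}=\frac{u\circ v_1\star(v\circ v_1)^{-1}\star u_1\circ v}{v\circ v_1}$; rewriting $u_1\circ v=v\circ u_1$ by commutativity of $\circ$ recovers exactly the operations in the statement. The only subtlety is the equivalence relation: a priori the Grothendieck relation requires an auxiliary $k$ with $u\circ v_1\circ k=u_1\circ v\circ k$, but cancellativity of $(F/N',\circ)$ lets me drop $k$ and rewrite the relation as $u\circ v_1=v\circ u_1$, as claimed.

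It then remains to prove that $B(F/N')$ is free. I would verify the universal property directly, and the key observation here is that every commutative skew brace is a commutative wire: when $\circ$ is commutative, left and right brace distributivity coincide, so such a skew brace is automatically two-sided and in particular is a wire. Consequently, given any commutative skew brace $B$ and any map $g\colon X\to B$, the universal property of the free commutative wire produces a unique wire morphism $\tilde g\colon F/N'\to B$ extending $g$, and Proposition~\ref{pro:universalproperty} then extends $\tilde g$ uniquely to a skew brace morphism $\hat g\colon B(F/N')\to B$ with $\hat g\circ\iota=\tilde g$.

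For uniqueness of $\hat g$ as a skew brace morphism, I would note that any skew brace morphism $B(F/N')\to B$ restricting to $g$ on $X$ must, after precomposition with $\iota$, equal $\tilde g$ by uniqueness of the wire extension, hence must equal $\hat g$ by the uniqueness clause of Proposition~\ref{pro:universalproperty}. I do not anticipate serious difficulty, since the hard computational content lives in Proposition~\ref{pro:fractions} and in the cancellativity argument. The one place to stay alert is the interplay of the two universal properties---ensuring that the composite \emph{free commutative wire, then fractions} genuinely functions as the free commutative skew brace---together with the small but essential remark that commutativity of $\circ$ forces two-sidedness, which is precisely what licenses applying the fractions machinery, stated for commutative wires, with target the arbitrary commutative skew brace $B$.
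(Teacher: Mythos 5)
Your proposal is correct and follows essentially the same route as the paper: the paper's proof simply identifies the stated object as the skew brace of fractions of the free commutative wire $F/N'$ and cites the universal property of Proposition~\ref{pro:universalproperty}, relying on the cancellativity discussion immediately preceding the theorem for the simplified equivalence relation. Your write-up supplies the details the paper leaves implicit---notably the observation that commutativity of $\circ$ makes every commutative skew brace a commutative wire (so the universal property of $F/N'$ applies with target $B$), and the two-step uniqueness argument---but these are elaborations of the same argument, not a different one.
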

        \begin{proof}
            This amounts to saying that the free commutative skew brace generated by the set $X$ is simply the skew brace of fractions of $F/N'$. This is the case by the universal property of the skew brace of fractions, see Proposition~\ref{pro:universalproperty}.
        \end{proof}
        \subsection*{Acknowledgements}
        This work was partially supported by
the project OZR3762 of Vrije Universiteit Brussel, the 
FWO Senior Research Project G004124N,
        and the author is supported by an ASP grant from FNRS. 

        I am very grateful to Victoria Lebed, with whom we started thinking about free skew braces in general. Her insightful comments significantly improved the quality of this paper. I also thank my supervisors, Leandro Vendramin and Joost Vercruysse, for their careful reading and valuable feedback.
        Finally, I would like to thank Be'eri Greenfeld, Eric Jespers, Silvia Properzi, Marco Trombetti and Arne Van Antwerpen for the interesting discussions on free skew braces in Banff, at the “Skew Braces, Braids and the Yang-Baxter Equation” workshop, and elsewhere.
    \bibliographystyle{abbrv}
\bibliography{ref}

\begin{thebibliography}{10}

\bibitem{BALLESTER–BOLINCHES_ESTEBAN–ROMERO_KURDACHENKO_PÉREZ–CALABUIG_2025}
A.~Ballester–Bolinches, R.~Esteban–Romero, L.~A. Kurdachenko, and V.~Pérez–Calabuig.
\newblock From actions of an abelian group on itself to left braces.
\newblock {\em Mathematical Proceedings of the Cambridge Philosophical Society}, 178(1):65–79, 2025.

\bibitem{baxter2016exactly}
R.~J. Baxter.
\newblock {\em Exactly solved models in statistical mechanics}.
\newblock Elsevier, 2016.

\bibitem{MR4443751}
T.~Brzezi\'nski, S.~Mereta, and B.~Rybo\l~owicz.
\newblock From pre-trusses to skew braces.
\newblock {\em Publ. Mat.}, 66(2):683--714, 2022.

\bibitem{CATINO2017163}
F.~Catino, I.~Colazzo, and P.~Stefanelli.
\newblock Semi-braces and the yang–baxter equation.
\newblock {\em Journal of Algebra}, 483:163--187, 2017.

\bibitem{10.1007/BFb0101175}
V.~G. Drinfeld.
\newblock On some unsolved problems in quantum group theory.
\newblock In P.~P. Kulish, editor, {\em Quantum Groups}, pages 1--8, Berlin, Heidelberg, 1992. Springer Berlin Heidelberg.

\bibitem{MR53938}
R.~H. Fox.
\newblock Free differential calculus. {I}. {D}erivation in the free group ring.
\newblock {\em Ann. of Math. (2)}, 57:547--560, 1953.

\bibitem{guarnieri2017skew}
L.~Guarnieri and L.~Vendramin.
\newblock Skew braces and the yang--baxter equation.
\newblock {\em Mathematics of Computation}, 86(307):2519--2534, 2017.

\bibitem{kepka2007commutative}
T.~Kepka and P.~N{\v{e}}mec.
\newblock Commutative radical rings. i.
\newblock {\em Acta Universitatis Carolinae. Mathematica et Physica}, 48(1):11--41, 2007.

\bibitem{levi1913arithmetische}
F.~Levi.
\newblock Arithmetische gesetze im gebiete diskreter gruppen.
\newblock {\em Rendiconti del Circolo Matematico di Palermo (1884-1940)}, 35(1):225--236, 1913.

\bibitem{a3e22350-c9b8-38aa-8153-d26817696fd2}
W.~Magnus.
\newblock On a theorem for marshall hall.
\newblock {\em Annals of Mathematics}, 40(4):764--768, 1939.

\bibitem{milies2002introduction}
C.~P. Milies.
\newblock {\em An Introduction to Group Rings}.
\newblock Kluwer Academic Publishers, 2002.

\bibitem{remeslennikov1970some}
V.~Remeslennikov and V.~Sokolov.
\newblock Some properties of a magnus embedding.
\newblock {\em Algebra and Logic}, 9(5):342--349, 1970.

\bibitem{Rump}
W.~Rump.
\newblock Braces, radical rings, and the quantum {Y}ang-{B}axter equation.
\newblock {\em J. Algebra}, 307(1):153--170, 2007.

\bibitem{RUMP2007153}
W.~Rump.
\newblock Braces, radical rings, and the quantum yang–baxter equation.
\newblock {\em Journal of Algebra}, 307(1):153--170, 2007.

\bibitem{MR3291816}
W.~Rump.
\newblock The brace of a classical group.
\newblock {\em Note Mat.}, 34(1):115--144, 2014.

\bibitem{MR3977806}
W.~Rump.
\newblock Set-theoretic solutions to the {Y}ang-{B}axter equation, skew-braces, and related near-rings.
\newblock {\em J. Algebra Appl.}, 18(8):1950145, 22, 2019.

\bibitem{MR3763907}
A.~Smoktunowicz and L.~Vendramin.
\newblock On skew braces (with an appendix by {N}. {B}yott and {L}. {V}endramin).
\newblock {\em J. Comb. Algebra}, 2(1):47--86, 2018.

\bibitem{trappeniers2023two}
S.~Trappeniers.
\newblock On two-sided skew braces.
\newblock {\em Journal of Algebra}, 631:267--286, 2023.

\end{thebibliography}
\end{document}